\documentclass{amsart}

\usepackage{geometry}
\usepackage{fancyhdr}
\usepackage{setspace}
\usepackage[all]{xy}
\usepackage{amssymb}
\usepackage{mathtools}
\usepackage[alphabetic,nobysame]{amsrefs}
\usepackage{amsfonts}
\usepackage{amsmath}
\usepackage{amsthm}
\usepackage{a4wide}
\usepackage{textcomp}
\usepackage{epsfig}
\usepackage{tikz-cd}
\usepackage{bm}
\usepackage[colorinlistoftodos]{todonotes}
\usepackage{aliascnt}
\usepackage[colorlinks, linkcolor=black,citecolor=black, urlcolor=black]{hyperref} 

\allowdisplaybreaks
\DeclareMathOperator{\gr}{gr}
\DeclareMathOperator{\pr}{pr}

\DeclareMathOperator{\im}{im}

\DeclareMathOperator{\ind}{ind}

\DeclareMathOperator{\id}{id}
\DeclareMathOperator{\can}{can}

\DeclareMathOperator{\sym}{sym}
\DeclareMathOperator{\fl}{flat}

\newcommand{\R}{\mathbb{R}}

\newcommand{\mfKill}[1]{}

\newtheorem{thm}{Theorem}[section]

\newaliascnt{prop}{thm}
\newtheorem{prop}[prop]{Proposition}
\aliascntresetthe{prop}

\newaliascnt{defi}{thm}

\aliascntresetthe{defi}

\newaliascnt{ex}{thm}

\aliascntresetthe{ex}

\newaliascnt{rmk}{thm}
\newtheorem{rmk}[rmk]{Remark}
\aliascntresetthe{rmk}

\newaliascnt{lemma}{thm}
\newtheorem{lemma}[lemma]{Lemma}
\aliascntresetthe{lemma}

\newaliascnt{cor}{thm}

\aliascntresetthe{cor}

\begin{document}

\title[On flat translated chains of contactomorphisms]{On flat translated chains\\
of contactomorphisms of $\mathbb{R}^{2n+1}$ and $\mathbb{R}^{2n} \times S^1$}

\author{Marco Mazzucchelli}
\address{Marco Mazzucchelli\newline\indent Sorbonne Université, Université Paris Cité, CNRS, IMJ-PRG, F-75005 Paris, France}
\email{marco.mazzucchelli@imj-prg.fr}

\author{Sheila Sandon}
\address{Sheila Sandon\newline\indent Universit\'e de Strasbourg, CNRS, IRMA UMR 7501, F-67000 Strasbourg, France}
\email{sandon@math.unistra.fr}

\begin{abstract}
\noindent We introduce the notions
of flat translated chains of contactomorphisms
and periodic flat translated chains of finite sequences of contactomorphisms,
and extend to these notions the theorem of Viterbo (1992)
on the multiplicity of periodic points
of compactly supported Hamiltonian diffeomorphisms
of $\mathbb{R}^{2n}$.
More precisely,
we show that every non-trivial compactly supported contactomorphism
of either $\mathbb{R}^{2n+1}$ or $\mathbb{R}^{2n} \times S^1$
that is contact isotopic to the identity
has infinitely many geometrically distinct flat translated chains
in the interior of the support
with respect to the standard contact form,
that the growth-rate of such translated chains
is at least linear
if the contactomorphism is non-negative,
as well as similar statements
for periodic flat translated chains
of finite sequences of contactomorphisms.
\end{abstract}

\maketitle


\section{Introduction}

Consider the Euclidean space $\mathbb{R}^{2n}$,
with coordinates $(x_1, y_1, \cdots, x_n, y_n)$,
endowed with its standard symplectic form
$\omega_0 = \sum_{j=1}^n dx_j \wedge dy_j$.
Using generating functions,
in \cite{Viterbo} Viterbo defined spectral selectors $c_-$ and $c_+$
for compactly supported Hamiltonian diffeomorphisms
of $(\mathbb{R}^{2n}, \omega_0)$
and proved, among several remarkable applications,
the following dynamical result.

\begin{thm}[Periodic points \cite{Viterbo}]\label{theorem: Viterbo}
Every non-trivial compactly supported Hamiltonian diffeomorphism
of $(\mathbb{R}^{2n}, \omega_0)$
has infinitely many geometrically distinct periodic points
in the interior of the support.
Moreover,
if such a diffeomorphism $\varphi$ is non-negative
(i.e.\ is the time-$1$ map of the flow
of a compactly supported non-negative Hamiltonian function)
then the number $N (\varphi, m)$
of periodic point of $\varphi$
in the interior of the support
of period smaller then or equal to $m$
grows at least linearly in $m$,
i.e.
\[
\limsup_{m \to \infty} \, \frac{N (\varphi, m)}{m} > 0 \,.
\]
\end{thm}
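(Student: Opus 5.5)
The plan is to use Viterbo's spectral selectors $c_\pm$ for compactly supported Hamiltonian diffeomorphisms of $(\mathbb{R}^{2n},\omega_0)$, defined via generating functions quadratic at infinity. I will rely on the following standard properties. The selectors $c_-(\varphi)\le 0\le c_+(\varphi)$ are critical values of the action functional, meaning they are actions of fixed points. One has $c_-(\varphi)=c_+(\varphi)=0$ if and only if $\varphi=\id$. The selectors are monotone: $\varphi$ non-negative implies $c_-(\varphi)=0$ and $c_+(\varphi)\ge 0$. Finally, they are subadditive, $c_+(\varphi\psi)\le c_+(\varphi)+c_+(\psi)$.

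I will also use a key dichotomy, proved by a Lusternik--Schnirelmann argument on the generating function. If $c_+(\varphi)$ is attained by only finitely many fixed points, or if $c_+(\varphi)=c_-(\varphi)$ with $\varphi\neq\id$, then the critical set has positive category and one gets infinitely many fixed points. The degenerate situation $c_+=c_-=0$ with $\varphi\neq \id$ is excluded by the non-degeneracy property above.

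\emph{First statement.} Suppose, for a contradiction, that $\varphi\ne\id$ has only finitely many periodic points in the interior of the support. Points outside the support are fixed with action $0$. Possibly after replacing $\varphi$ by $\varphi^{-1}$, assume $c_+(\varphi)>0$. Consider the iterates $\varphi^k$. Each value $c_+(\varphi^k)$ is the action of a fixed point of $\varphi^k$ in the interior of the support, since it is nonzero; showing it stays nonzero is the task below. Such a fixed point is a periodic point of $\varphi$, and its action under $\varphi^k$ is $k$ times the averaged action over its period. Under the finiteness hypothesis, the set of $\varphi$-periodic orbits is finite, with periods $p_1,\dots,p_r$ and mean actions $a_1,\dots,a_r$. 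Hence for $k$ a multiple of $p=\mathrm{lcm}(p_i)$, the value $c_+(\varphi^{k})$ lies in $\{k a_i\}\cup\{0\}$.

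Subadditivity gives $c_+(\varphi^{k})\le k\,c_+(\varphi)$. I also want the lower bound $c_+(\varphi^{k})\ge c_+(\varphi^p)>0$ for multiples $k$ of $p$. I expect this to follow from $c_+(\psi^m)\ge c_+(\psi)$ when $c_+(\psi)>0$, which is the standard fact that the capacity $c_+ - c_-$ of the displaced support controls iterates. The hard point is to exclude that $c_+(\varphi^k)$ drops to $0$. This is the \textbf{main obstacle}. To handle it I would use the identity $c_+(\psi)=-c_-(\psi^{-1})$ together with a Lusternik--Schnirelmann cup-length argument. That argument forces a jump in the selector whenever only finitely many critical points with a given action exist, and combined with the dichotomy it yields infinitely many fixed points of some $\varphi^k$ in the interior, a contradiction.

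\emph{Growth statement.} Let $\varphi$ be non-negative and generated by $H\ge0$, $H\not\equiv0$. Then $c_+(\varphi^k)$ grows linearly, by comparison with an autonomous Hamiltonian $K\le H$ supported in a small ball where $H>\epsilon$. For such a $K$ one has $c_+(\phi_K^k)\ge \min(k\epsilon\cdot\text{const},\ \text{capacity})$. Consequently, for each level the values $c_+(\varphi^k)$, $k\le m$, are actions of $k$-periodic points lying in a bounded window $[0,C]$ with $C$ the sup of $H$ times area, more precisely action at most $\|H\|_\infty$ per iterate. The selectors of $\varphi^k$ are strictly increasing in $k$ when non-degenerate; otherwise the dichotomy produces infinitely many points. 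Distinct mean actions $c_+(\varphi^k)/k$ then correspond to geometrically distinct orbits. A pigeonhole count shows that the number of distinct values of $c_+(\varphi^k)/k$ for $k\le m$ is at least $c\,m$, because a single orbit of period $p$ and mean action $a$ can only realise the values $k\,a$ for $k\in p\mathbb{Z}$, and these must match the increasing, linearly bounded sequence $c_+(\varphi^k)$. This gives $\limsup_m N(\varphi,m)/m>0$.
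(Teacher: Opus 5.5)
There is a genuine gap, and it sits exactly at what you call the main obstacle. The missing ingredient is the \emph{uniform} bound on iterates. If $\mathcal{U}$ is a bounded open set containing the support of $\varphi$ and $\psi$ displaces $\mathcal{U}$, then $c_+(\varphi^m)\le c_+(\psi)-c_-(\psi)$ for all $m$. Via $c_-(\varphi)=-c_+(\varphi^{-1})$ the same bound holds for $|c_-(\varphi^m)|$ (Lemma~\ref{lemma: bound symplectic}). You use the displacement capacity instead as a \emph{lower} bound $c_+(\psi^m)\ge c_+(\psi)$, which is not available for general $\psi$ (monotonicity gives it only for non-negative $\psi$). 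Your only upper bound, $c_+(\varphi^k)\le k\,c_+(\varphi)$ from subadditivity, is compatible with $c_+(\varphi^k)=k a_i$, so no contradiction arises.

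The obstacle is also self-inflicted. You do not need $c_+(\varphi^k)\neq 0$ for all $k$, nor a Lusternik--Schnirelmann dichotomy. As you state it, that dichotomy is not correct: finitely many fixed points realizing $c_+$ do not produce infinitely many fixed points. Since $\varphi^l=\varphi^{l+1}=\id$ would force $\varphi=\id$, infinitely many $\varphi^k$ are non-trivial. For each of them, non-degeneracy and $c_-\le 0\le c_+$ give $c_+(\varphi^k)>0$ or $c_-(\varphi^k)<0$; pass to a subsequence with a fixed sign. These selectors are non-zero actions, so they are not the trivial points outside the support, and they are uniformly bounded. Hence the mean actions $c_\pm(\varphi^k)/k$ are non-zero and tend to $0$, so they take infinitely many values, and distinct mean actions force geometrically distinct periodic points. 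In your contradiction framing: boundedness gives $c_\pm(\varphi^k)\notin\{k a_i : a_i\neq 0\}$ for $k$ large. Hence $c_+(\varphi^k)=c_-(\varphi^k)=0$, i.e.\ $\varphi^k=\id$ for all large $k$, so $\varphi=\id$.

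The growth part has the same missing bound and an unproved counting step. Comparison with an autonomous $K\le H$ is unnecessary. It also needs $H_t>\epsilon$ on a fixed ball for all $t$, which a time-dependent $H\ge 0$ need not satisfy. What you need is just
$0<c_+(\varphi)\le c_+(\varphi^m)\le c_+(\varphi^{m+1})\le C$.
This follows from monotonicity (since $\varphi^{m-1}$ and $\varphi$ are non-negative), from $c_-(\varphi)=0$ together with non-degeneracy, and from the displacement bound. Strict increase is neither justified nor needed.

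Your pigeonhole is not established by the remark that a single orbit of period $p$ realises only values $ka$ with $k\in p\mathbb{Z}$. What must be counted is distinct values of $c_m/m$, where $c_m=c_+(\varphi^m)$. Set $m(1)=1$, and let $m(j+1)$ be the least $m>m(j)$ with $c_m/m<c_{m(j)}/m(j)$; it exists since $c_m/m\to 0$. The values $c_{m(j)}/m(j)$ are pairwise distinct, so $N(\varphi,m(j))\ge j$. Minimality together with $c_{m(j+1)-1}\le c_{m(j+1)}$ gives $m(j+1)-1\le m(j)\,c_{m(j+1)}/c_{m(j)}$. Hence $m(j+1)/m(j)\le (c_{m(j+1)}/c_{m(j)})(1+1/j)$, which telescopes to $m(j)\le (C/c_+(\varphi))\,j$. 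This is exactly the argument the paper runs for flat translated chains in Section~\ref{section: proof}.
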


Consider now the odd dimensional Euclidean space $\mathbb{R}^{2n+1}$,
with coordinates $(x_1, y_1, \cdots, x_n,y_n, \theta)$,
endowed with the contact structure $\xi_0$
given by the kernel of the standard contact form
$\alpha_0 = d\theta + \sum_{j=1}^n \frac{x_jdy_j - y_jdx_j}{2}$,
and its quotient
$\mathbb{R}^{2n} \times S^1$
(where $S^1 = \mathbb{R}/\mathbb{Z}$),
endowed with the induced contact form and contact structure,
still denoted by $\alpha_0$ and $\xi_0$.
Extending the constructions of Viterbo to the contact case,
Bhupal \cite{Bhupal} and the second author \cites{San10, San11}
defined spectral selectors $c_-$ and $c_+$
for compactly supported contactomorphisms
of $(\mathbb{R}^{2n+1}, \xi_0)$
and $(\mathbb{R}^{2n} \times S^1, \xi_0)$
contact isotopic to the identity,
and used them to obtain generalizations
of some of the results of \cite{Viterbo}:
the definition of a bi-invariant partial order
on the group of compactly supported contactomorphisms
of $(\mathbb{R}^{2n+1}, \xi_0)$ and $(\mathbb{R}^{2n} \times S^1, \xi_0)$
contact isotopic to the identity,
of an integer-valued bi-invariant metric on this group
in the case of $(\mathbb{R}^{2n} \times S^1, \xi_0)$,
and of an integer-valued capacity for domains
of $(\mathbb{R}^{2n} \times S^1, \xi_0)$,
allowing to obtain an alternative proof
of the contact non-squeezing theorem for integers
of Eliashberg, Kim and Polterovich \cite{EKP}.

In 2010 Viktor Ginzburg asked to the second author
whether the spectral selectors $c_-$ and $c_+$ for contactomorphisms
could also be used to obtain a contact analogue
of Theorem \ref{theorem: Viterbo}.
In order to answer this question,
the second author introduced in \cite{San12}
the notions of translated points
and iterated translated points
of contactomorphisms,
and proved that
every compactly supported contactomorphism
of $(\mathbb{R}^{2n+1}, \xi_0)$ or $(\mathbb{R}^{2n} \times S^1, \xi_0)$
contact isotopic to the identity
that is positive
(i.e.\ is the time-$1$ map of the flow
of a Hamiltonian function with non-empty support
that is positive in the interior of the support)
has infinitely many geometrically distinct
iterated translated points with respect to $\alpha_0$
in the interior of the support.

Recall that a translated point
of a contactomorphism $\phi$
of a contact manifold $(M, \xi)$
with respect to a contact form $\alpha$ for $\xi$
is a point $p$ of $M$
such that $p$ and $\phi(p)$ are in the same orbit
of the Reeb flow of $\alpha$
and $g(p) = 0$,
where $g$ is the conformal factor of $\phi$
with respect to $\alpha$,
i.e.\ the function satisfying
$\phi^{\ast}\alpha = e^g \alpha$.
Recall also that in \cite{San12}
a point $p$ of $\mathbb{R}^{2n+1}$ or $\mathbb{R}^{2n} \times S^1$
is called an iterated translated point
of a contactomorphism $\phi$
if it is a translated point of some iteration $\phi^m$.
Translated points of $\mathcal{C}^0$-small contactomorphisms
of compact contact manifolds
always exist,
and satisfy lower bounds analogous to those
predicted by the Arnold conjecture
for fixed points of Hamiltonian diffeomorphisms
\cite{San11b}.
Existence and multiplicity results
beyond the $\mathcal{C}^0$-small case
have also been proved on several classes of contact manifolds
\cites{San12, San11b,
AM, AFM, Shelukhin, MN, MU, GKPS, Tervil, Allais22a, Allais22b, AArl},
but compact contact manifolds
admitting contactomorphisms
contact isotopic to the identity
without translated points also exist
\cites{Cant, HS}.
The known results on existence of translated points
seem to suggest a link of this problem,
still to be understood,
with the orderability question for contact manifolds
introduced by Eliashberg and Polterovich \cite{EP},
and confirm the interest of translated points
for exploring global rigidity phenomena
in contact topology.
On the other hand,
iterated translated points have not been studied further,
and with hindsight do not seem to be a very relevant notion.
An important difference with respect to periodic
(or iterated\footnote{Fixed points of iterations
of a Hamiltonian diffeomorphism
are alternatively called periodic or iterated points.}) points
of Hamiltonian diffeomorphisms
is that iterated translated points are not periodic,
in the sense that if $p$ is a translated point
of a contactomorphism $\phi$
then $p$ is in general not also a translated point
of the iterations $\phi^{m}$.
Partly because of this,
although the spectral selectors $c_{\pm}$ satisfy in the contact case
properties similar to those that hold in the symplectic case,
the result in \cite{San12}
is weaker than the one of \cite{Viterbo}.
On the other hand,
as we now discuss,
a periodicity property 
(Lemma \ref{lemma: action relation for not geometrically distinct chains} below)
similar to that of periodic points
of Hamiltonian diffeomorphisms
holds for (flat) translated chains of contactomorphisms,
making it possible to obtain for these objects
a result that is totally analogous
to the one of \cite{Viterbo}.

The notion of \emph{translated chains}
of contactomorphisms has been introduced in \cite{FSZ},
in order to prove with generating functions
the general contact non-squeezing theorem
in $(\mathbb{R}^{2n} \times S^1, \xi_0)$
of Chiu \cite{Chiu} and Fraser \cite{Fraser}.
We give in the present work
a more general definition of translated chains,
and call the translated chains of \cite{FSZ}
\emph{symmetric} translated chains.
Let $\phi$ be a contactomorphism
of a contact manifold $\big(M, \xi = \ker (\alpha)\big)$,
and denote by $g$ its conformal factor
with respect to $\alpha$.
Denote by $\{\varphi_t^{\alpha}\}$ the Reeb flow of $\alpha$,
and let $k$ be a positive integer.
A \emph{translated} $k$\emph{-chain} of $\phi$
with respect to $\alpha$
is a $k$-tuple $\bm{p} = (p_0, \cdots, p_{k-1})$
of points such that
\[
g(p_0) + \dots + g (p_{k-1}) = 0
\]
and
\[
p_{j+1} = \varphi_{-t_j}^{\alpha_0} \circ \phi \, (p_j)
\quad \text{ for } j \in \mathbb{Z}_k \,,
\]
for a $k$-tuple $\bm{t} = (t_0, \cdots, t_{k-1})$ of real numbers.
More generally,
a \emph{translated chain}
with respect to $\alpha$
of a sequence $\bm{\phi} = (\phi_0, \cdots, \phi_{k-1})$
of contactomorphisms
is a $k$-tuple $\bm{p} = (p_0, \cdots, p_{k-1})$
of points such that
\[
g_0 (p_0) + \dots + g_{k-1} (p_{k-1}) = 0 \,,
\]
where $g_j$ denotes the conformal factor of $\phi_j$,
and
\[
p_{j+1} = \varphi_{-t_j}^{\alpha_0} \circ \phi_j \, (p_j)
\quad \text{ for } j \in \mathbb{Z}_k \,,
\]
for a $k$-tuple $\bm{t} = (t_0, \cdots, t_{k-1})$ of real numbers.
In particular,
a translated $k$-chain of a contactomorphism $\phi$
is a translated chain of the sequence
\[
(\phi)^k := (\underbrace{\phi, \cdots, \phi}_{k}) \,.
\]
If $t_0 = \cdots = t_{k-1}$
then we say that $\bm{p}$
is a \textit{symmetric translated chain}.

For $k = 1$ the notion of translated chain
reduces to the one of translated point.
For $k \geq 2$,
the existence problem of translated chains
if often underdetermined.
Indeed,
let $\bm{\phi}$ be a sequence of contactomorphisms,
and consider the map 
\[
\iota: M \times \R^k \to M \times M \times \R \;,\quad
\iota \, (p_0, \bm{t})
= \big(p_0 \,,\, p_k \,,\, g_0 (p_0) + \cdots + g_{k-1} (p_{k-1})\big) \,,
\] 
where $p_{j+1} = \varphi_{-t_j}^{\alpha_0} \circ \phi_j \, (p_j)$
for $j = 0, \cdots, k-1$.
Then $(p_0, \cdots, p_{k-1})$ is a translated chain of $\bm{\phi}$ 
if and only if $\iota (p_0, \bm{t})$
belongs to the intersection of the image of $\iota$
with the submanifold $\Delta = \{\, (p,p,0) \;\lvert\; p\in M \,\}$.
If $k \geq 2$ and $\iota$ is transverse to $\Delta$,
the above intersection (if not empty)
has positive dimension.
On the other hand,
symmetric translated chains
are generically isolated,
since they correspond to the intersections with $\Delta$
of the map
\[
\iota_{\sym}: M \times \R \to M \times M \times \R \;,\quad
\iota_{\sym} \, (p_0, t)
= \big(p_0 \,,\, p_k \,,\, g_0 (p_0) + \cdots + g_{k-1} (p_{k-1})\big) \,,
\] 
where $p_{j+1} = \varphi_{-t}^{\alpha_0} \circ \phi_j \, (p_j)$
for $j = 0, \cdots, k-1$.

We now introduce another class of translated chains
that are generically isolated.
We say that a $k$-tuple
$\bm{p} = (p_0, \cdots, p_{k-1})$
of points of $M$
is a \emph{flat translated chain}
of a sequence $\bm{\phi} = (\phi_0, \cdots, \phi_{k-1})$
of contactomorphisms of $\big(M, \xi = \ker (\alpha) \big)$
with respect to $\alpha$
if it is a translated chain of $\bm{\phi}$
with respect to $\alpha$
such that $g_j (p_j) = 0$ for all $j$,
where $g_j$ is the conformal factor of $\phi_j$.
Flat translated chains are generically isolated,
since they correspond to the intersections with
$\Delta' = \{\, (p, p, 0, \cdots, 0) \;\lvert\; p\in M \,\}$
of the image of the map
\[
\iota_{\fl}: M \times \R^k \to M \times M \times \R^k \;,\quad
\iota_{\fl} \, (p_0, \bm{t})
= \big(p_0 \,,\, p_k \,,\, g_0 (p_0) \,,\, \cdots \,,\, g_{k-1} (p_{k-1})\big) \,,
\]
where $p_{j+1} = \varphi_{-t_j}^{\alpha_0} \circ \phi_j \, (p_j)$
for $j = 0, \cdots, k-1$.

\begin{rmk}\label{remark: flat translated chains}
The motivation for the terminology
of flat translated chains
comes from the fact that,
seen in the symplectization,
all the nodes of a flat translated chain
are at the same level.
More precisely,
recall that the lift of a contactomorphism $\phi$
of a contact manifold $\big(M, \xi = \ker (\alpha)\big)$
to the symplectization
$\big(\mathbb{R} \times M \,,\, d (e^r \alpha)\big)$
is the symplectomorphism $\widetilde{\phi}$
defined by $\widetilde{\phi} \, (r, p) = \big( r + g(p) \,,\, \phi(p) \big)$,
where $g$ is the conformal factor of $\phi$
with respect to $\alpha$.
We say that a $k$-tuple
$\big( (r_0, p_0), \cdots, (r_{k-1}, p_{k-1}) \big)$
of points of $\mathbb{R} \times M$
is a \emph{leafwise chain} of a sequence
$\bm{\varphi} = (\varphi_0, \cdots, \varphi_{k-1})$
of Hamiltonian diffeomorphisms
if $(r_{j+1}, p_{j+1}) =
\widetilde{\varphi_{-t_j}^{\alpha}} \circ \varphi \, (p_j)$
for $j \in \mathbb{Z}_k$.
If the sequence $\bm{\varphi}$ is of the form
$(\widetilde{\phi_0}, \cdots, \widetilde{\phi_{k-1}})$
for a sequence $\bm{\phi} = (\phi_0, \cdots, \phi_{k-1})$
of contactomorphisms,
then the projection $(p_0, \cdots, p_{k-1})$
of a leafwise chain
$\big( (r_0, p_0), \cdots, (r_{k-1}, p_{k-1}) \big)$
of $\bm{\varphi}$
is a translated chain of $\bm{\phi}$
with respect to $\alpha$,
which is flat if and only if $r_0 = \cdots = r_{k-1}$.
\end{rmk}

For each positive integer $m$,
we define the $m$-th iteratation of a sequence
$\bm{\phi} = (\phi_0, \cdots, \phi_{k-1})$
of contactomorphisms of $\big(M, \xi = \ker(\alpha)\big)$
to be the sequence
\[
\bm{\phi}^m =
(\underbrace{\bm{\phi}, \cdots, \bm{\phi}}_{m}) \,.
\]
We say that $\bm{p}$ is a
\emph{periodic (flat) translated chain}
of $\bm{\phi}$ with respect to $\alpha$
if it is a (flat) translated chain of $\bm{\phi}^m$
with respect to $\alpha$
for some positive integer $m$,
which we call the \emph{period}
of the periodic (flat) translated chain $\bm{p}$.
In particular,
a periodic (flat) translated chain of period $m$
of a contactomorphism $\phi$
is just a (flat) translated $m$-chain of $\phi$.

We consider the equivalence relation
on the set of periodic flat translated chains
of a sequence $\bm{\phi}$
that is generated by the following two operations:
\begin{itemize}
\item \emph{Cyclic permutation}:
$(\bm{p}_0, \cdots, \bm{p}_{m-1}) \mapsto
(\bm{p}_1, \cdots, \bm{p}_{m-1}, \bm{p}_0)$
for a periodic flat translated chain
$(\bm{p}_0, \cdots, \bm{p}_{m-1})$
of period $m$.
\item \emph{Iteration}:
$\bm{p} \mapsto \bm{p}^l = (\underbrace{\bm{p}, \cdots, \bm{p}}_{l})$.
\end{itemize}
We say that two periodic (flat) translated chains of $\bm{\phi}$
are geometrically distinct
if they are not in the same class for this equivalence relation.

Before stating our main result,
we still need to recall or introduce a few more notions.
Recall first that there is a partial order
on the group of compactly supported contactomorphisms
of either $(\mathbb{R}^{2n+1}, \xi_0)$ \cite{Bhupal}
or $(\mathbb{R}^{2n} \times S^1, \xi_0)$
\cite{San11}
contact isotopic to the identity,
defined by posing $\phi \leq \psi$
if $\psi \circ \phi^{-1}$ is the time-$1$ map
of the flow of a non-negative Hamiltonian function.
We define a partial order on the space of sequences
of compactly supported contactomorphisms
of either $(\mathbb{R}^{2n+1}, \xi_0)$
or $(\mathbb{R}^{2n} \times S^1, \xi_0)$
contact isotopic to the identity
by posing $\bm{\phi} \leq \bm{\psi}$,
for $\bm{\phi} = (\phi_0, \cdots, \phi_{k-1})$
and $\bm{\psi} = (\psi_0, \cdots, \psi_{k-1})$,
if $\phi_j \leq \psi_j$ for all $j$.
We say that $\bm{\phi}$ is non-negative
if $\bm{\id} \leq \bm{\phi}$,
where $\bm{\id} = (\id, \cdots, \id)$.
We say that a sequence $\bm{\phi} = (\phi_0, \cdots, \phi_{k-1})$
is non-trivial
if at least one of the $\phi_j$'s
is not the identity.
Finally,
we say that a periodic flat translated chain
$\bm{p} = (p_0, \cdots, p_{mk-1})$ of a non-trivial sequence
$\bm{\phi} = (\phi_0, \cdots,\phi_{k-1})$
is contained in the interior of the support of $\bm{\phi}$
if $p_j$ is contained in the interior of the support
of  $\phi_{j\,\mathrm{mod}\,k}$
for at least one value of $j \in \mathbb{Z}_{mk}$,
and we denote by $N(\bm{\phi}, m)$
the number of geometrically distinct
periodic flat translated chains of $\bm{\phi}$
contained in the interior of the support
and having period smaller than or equal to $m$.
Our main result is then the following analogue
of \autoref{theorem: Viterbo}.

\begin{thm}[Periodic flat translated chains]\label{theorem: main}
Every non-trivial sequence $\bm{\phi}$
of compactly supported contactomorphisms
of either $(\mathbb{R}^{2n+1}, \xi_0)$ or $(\mathbb{R}^{2n} \times S^1, \xi_0)$
contact isotopic to the identity
has infinitely many geometrically distinct
periodic flat translated chains
with respect to $\alpha_0$
in the interior of the support.
Moreover,
if $\bm{\phi}$ is non-negative
then the growth of such periodic flat translated chains
is at least linear,
i.e.
\[
\limsup_{m \to \infty} \, \frac{N (\bm{\phi}, m)}{m} > 0 \,.
\]
\end{thm}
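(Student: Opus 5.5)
We adapt Viterbo's argument, using the spectral selectors $c_\pm$ of Bhupal and Sandon together with a generating-function description of flat translated chains of a sequence. The key steps are as follows.

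First, given $\bm{\phi} = (\phi_0, \cdots, \phi_{k-1})$, we form a composition-type generating function for the whole sequence. Each $\phi_j$ has a generating function quadratic at infinity $S_j$, obtained via the lift to $\mathbb{R}^{2n+2}$ in Bhupal's construction. We combine these with the usual broken-trajectory trick, adding auxiliary variables for the intermediate points. This gives a function $S_{\bm{\phi}}$ whose critical points correspond to $k$-tuples linked by $p_{j+1} = \varphi^{\alpha_0}_{-t_j}\circ\phi_j(p_j)$ with all $g_j(p_j) = 0$. The point is that imposing the vanishing of each $g_j$ separately, rather than only of their sum, is what flatness amounts to. Equivalently, we take the sum $S_0 \oplus \cdots \oplus S_{k-1}$ composed with the cyclic shift on the base.

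The critical value is then the total translation $t_0+\cdots+t_{k-1}$, the "action" of the chain. We define $c_\pm(\bm{\phi})$ by minimax on sublevel sets of $S_{\bm{\phi}}$, mimicking \cite{Bhupal}, \cite{San11}. We then need three properties:
\begin{enumerate}
\item $c_\pm(\bm{\phi})$ are actions of flat translated chains in the support, or are $0$.
\item $c_-\le 0\le c_+$, with $c_+=c_-=0$ only if $\bm{\phi}$ is trivial; this is the non-degeneracy, via the usual shifting argument.
\item Monotonicity under $\le$, and the triangle-type inequality $c_+(\bm{\phi}^{m+l})\le c_+(\bm{\phi}^m)+c_+(\bm{\phi}^l)$.
\end{enumerate}

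Second, we use the periodicity lemma (Lemma \ref{lemma: action relation for not geometrically distinct chains}). If $\bm{p}$ is a flat chain of $\bm{\phi}^m$ with action $a$, its iterate $\bm{p}^l$ is a flat chain of $\bm{\phi}^{ml}$ with action $la$, and cyclic permutations preserve action. Suppose there were only finitely many geometrically distinct periodic flat chains in the interior of the support. Then every value $c_\pm(\bm{\phi}^m)$ would be an integer multiple $l\,a_i$ of one of finitely many primitive actions $a_i$, with $lm_i=m$. Suppose moreover $c_+(\bm{\phi}^m)>0$ for some $m$. Subadditivity makes $m\mapsto c_+(\bm{\phi}^m)$ grow at most linearly, while non-degeneracy makes it nonzero. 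Viterbo's argument then applies: take $m$ to be a large prime, so that $c_+(\bm{\phi}^m)$ must be either $m\,a_i$ for a chain of period $1$ or $a_j$ for a chain of period $m$. The second option creates new chains, and the first makes $c_+(\bm{\phi}^m)/m$ constant, which we contradict using the inequalities relating $c_+$ of iterates. The case $c_-<0$ is treated symmetrically by working with $c_-$. Chains in the support but not in its interior have action $0$ in $\mathbb{R}^{2n+1}$, or integer action in $\mathbb{R}^{2n}\times S^1$, where we must also handle the Reeb orbits of length in $\mathbb{Z}$ over points outside the support. These are excluded by positivity of the values, after a small perturbation if necessary.

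Third, for the linear growth when $\bm{\phi}\ge\bm{\id}$, we would show that $c_+(\bm{\phi}^m)\ge \epsilon m$ for some $\epsilon>0$. To do this, we choose a nontrivial non-negative bump $\bm{\psi}\le\bm{\phi}$ supported in a ball, whose iterates are explicit, for which $c_+$ can be computed by hand. As $m$ ranges over the interval $[M,2M]$, the values $c_+(\bm{\phi}^m)$ then lie in $[\epsilon M, CM]$. These values are realized by chains whose primitive periods $m'$ must satisfy $m'\ge m\epsilon/C$, because the actions of primitive chains of period $m'$ are bounded by $Cm'$. Counting distinct primes in $[M,2M]$ then yields a number of distinct chains of order $M$, or at least $\gtrsim M/\log M$ refined to linear growth as in \cite{Viterbo} by using all $m$ and divisibility.

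The main obstacle is step one: building generating functions whose critical points are exactly the flat chains, as opposed to general or symmetric ones, and proving the triangle inequality for concatenated sequences. I would first try to realize flat chains as translated points of a single contactomorphism of the product $(\mathbb{R}^{2n+1})^k$, quotiented by $k-1$ Reeb directions. This fibered construction would make it possible to reuse the existing spectral selector theory directly.
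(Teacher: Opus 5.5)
There is a genuine gap. The missing ingredient is a bound on $c_+(\bm{\phi}^m)$ and $\lvert c_-(\bm{\phi}^m)\rvert$ that is \emph{independent of $m$}, and your plan for linear growth actually contradicts it. In the paper this bound is Lemma~\ref{lemma: bound contact}. It is obtained by sandwiching $\widetilde{\bm{\varphi}_1}\le\bm{\phi}\le\widetilde{\bm{\varphi}_2}$ between lifts of sequences of Hamiltonian diffeomorphisms, then combining monotonicity, the identity $c_\pm(\widetilde{\bm{\varphi}})=c_\pm(\varphi_{k-1}\circ\cdots\circ\varphi_0)$, and Viterbo's displacement estimate $c_+(\varphi^m)\le c_+(\psi)-c_-(\psi)$.

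Subadditivity cannot replace this bound. The inequality $c_+(\bm{\phi}^m)\le m\,c_+(\bm{\phi})$ is perfectly compatible with $c_+(\bm{\phi}^p)=p\,a$ for every large prime $p$, where $a>0$ is the action of a single period-one chain. So your ``first option'' is never contradicted by anything on your list. With the uniform bound, $c_+(\bm{\phi}^m)/m\to0$, and it suffices to extract $m(j)$ with pairwise distinct ratios $c_+(\bm{\phi}^{m(j)})/m(j)$ and apply Lemma~\ref{lemma: action relation for not geometrically distinct chains}. No primes are needed.

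For the same reason, the estimate $c_+(\bm{\phi}^m)\ge\epsilon m$ that you want for non-negative $\bm{\phi}$ is false: already for your bump $\bm{\psi}$ the values $c_+(\bm{\psi}^m)$ stay bounded. What is needed instead is $0<c_+(\bm{\phi})\le c_+(\bm{\phi}^m)\le c_+(\bm{\phi}^{m+1})\le C$. The middle inequalities come from $(\bm{\phi}^m,\bm{\id})\le\bm{\phi}^{m+1}$, monotonicity, and the \emph{stabilization} property $c_\pm(\bm{\phi},\id)=c_\pm(\bm{\phi})$. Stabilization relies on flatness, is absent from your list, and is the property to prove in place of a triangle inequality. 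Then take $m(j+1)$ to be the smallest $m>m(j)$ with $c_m/m<c_{m(j)}/m(j)$. This gives $m(j+1)/m(j)\le (c_{m(j+1)}/c_{m(j)})(1+1/j)$, which telescopes to $m(j)\le (C/c_+(\bm{\phi}))\,j\le (C/c_+(\bm{\phi}))\,N(\bm{\phi},m(j))$.

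Your non-degeneracy property is also misstated. The condition $c_-(\bm{\phi})=c_+(\bm{\phi})=0$ characterizes \emph{dynamically trivial} sequences, meaning each $\phi_j$ is strict and $\phi_{k-1}\circ\cdots\circ\phi_0=\id$. Such sequences need not be trivial, e.g.\ $(\widetilde{\varphi},\widetilde{\varphi}^{-1})$ with $\varphi\neq\id$. So you cannot claim $c_\pm(\bm{\phi}^m)\neq0$ for all $m$. You must treat dynamically trivial $\bm{\phi}$ directly: there every orbit $(p,\phi_0(p),\dots,\phi_{k-2}\circ\cdots\circ\phi_0(p))$ is a flat chain. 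Otherwise, use that $\bm{\phi}^m$ fails to be dynamically trivial for infinitely many $m$, since the exceptional set contains no two consecutive integers. Then pass to a subsequence on which $c_+>0$, or on which $c_-<0$.

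Your worry about integer actions and perturbations is unnecessary. Actions are computed on the lift $\bm{\phi}_{\mathbb{R}}$. A chain none of whose points lies in the interior of the corresponding support is a closed chain of pure Reeb translations in $\mathbb{R}^{2n+1}$, so $\sum_j t_j=0$. Hence any chain with nonzero action automatically lies in the interior of the support. Your step one is in the right direction: it is essentially the paper's $F_0\,\sharp\cdots\sharp\,F_{k-1}$ for odd $k$, compactified over $S^{2n}\times\mathbb{T}^k$, with even $k$ handled by appending $\id$. But without the uniform bound and stabilization, the rest of the argument does not go through.
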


The proof of Theorem \ref{theorem: main}
is parallel to the one of Theorem \ref{theorem: Viterbo}
given in \cite{Viterbo}.
One of the key ingredients for the latter
is the following periodicity property
of periodic points of Hamiltonian diffeomorphisms
and their actions.
Recall that the action
of a $m$-periodic point $p$
of a compactly supported Hamiltonian diffeomorphism
$\varphi$ of $(\mathbb{R}^{2n}, \omega_0)$
is the symplectic action $\mathcal{A}_{\varphi^m}(p)$
of $p$ as a fixed point of $\varphi^m$.
If $p$ is a $m$-periodic point of $\varphi$
then, for every positive integer $l$,
$p$ is also a $lm$-periodic point of $\varphi$,
and we have
$\mathcal{A}_{\varphi^{lm}} (p) = l \, \mathcal{A}_{\varphi^m}(p)$.
This implies that if $p_1$ and $p_2$
are a $m_1$-periodic and a $m_2$-periodic point respectively
of $\varphi$ with
\[
\frac{1}{m_1} \; \mathcal{A}_{\varphi^{m_1}} (p_1)
\neq \frac{1}{m_2} \; \mathcal{A}_{\varphi^{m_2}} (p_2)    
\]
then $p_1$ and $p_2$ are geometrically distinct.
The analogue of this property
for periodic flat translated chains is as follows.
The action of a translated chain $\bm{p} = (p_0, \cdots, p_{k-1})$
of a sequence $\bm{\phi} = (\phi_0, \cdots, \phi_{k-1})$
of contactomorphisms of $(\mathbb{R}^{2n+1}, \xi_0)$
with respect to $\alpha_0$
is defined by
$$
\mathcal{A}_{\bm{\phi}} (\bm{p}) = \sum_{j=0}^{k-1} t_j \,,
$$
where $\bm{t} = (t_0, \cdots, t_{k-1})$
is the unique $k$-tuple 
of real numbers
(called the \emph{time-shifts} of $\bm{p}$)
such that
$p_{j+1} = \varphi_{-t_j}^{\alpha_0} \circ \phi \, (p_j)$.
Consider now a sequence $\bm{\phi} = (\phi_0, \cdots, \phi_{k-1})$
of compactly supported contactomorphisms
of $(\mathbb{R}^{2n} \times S^1, \xi_0)$
contact isotopic to the identity,
and denote by
$\bm{\phi}_{\mathbb{R}}
= \big( (\phi_0)_{\mathbb{R}}, \cdots, (\phi_{k-1})_{\mathbb{R}} \big)$
its lift to $(\mathbb{R}^{2n+1}, \xi_0)$,
i.e.\ each $(\phi_j)_{\mathbb{R}}$ is the contactomorphism
of $(\mathbb{R}^{2n+1}, \xi_0)$
that projects to $\phi_j$ and is the identity
over the complement of the support of $\phi_j$.
We define the action of a translated chain $\bm{p}$
of $\bm{\phi}$ by
$$
\mathcal{A}_{\bm{\phi}} (\bm{p})
= \mathcal{A}_{\bm{\phi}_{\mathbb{R}}} (\bm{p}_{\mathbb{R}})
$$
for any $k$-tuple $\bm{p}_{\mathbb{R}}$
of points of $\mathbb{R}^{2n+1}$
projecting to $\bm{p}$.
The following lemma,
which follows directly from the definitions,
will play an important role
in the proof of Theorem \ref{theorem: main}.

\begin{lemma}[Periodicity]
\label{lemma: action relation for not geometrically distinct chains}
Let $\bm{\phi}$ be a sequence
of compactly supported contactomorphisms
of either $(\mathbb{R}^{2n+1}, \xi_0)$ or $(\mathbb{R}^{2n} \times S^1, \xi_0)$
contact isotopic to the identity,
and $\bm{p}_1$ and $\bm{p}_2$
periodic translated chains of $\bm{\phi}$
of periods $m_1$ and $m_2$ respectively.
Assume that
\[
\frac{1}{m_1} \; \mathcal{A}_{\bm{\phi}^{m_1}} (\bm{p}_1)
\neq \frac{1}{m_2} \; \mathcal{A}_{\bm{\phi}^{m_2}} (\bm{p}_2) \,.
\]
Then $\bm{p}_1$ and $\bm{p}_2$
are geometrically distinct.
\end{lemma}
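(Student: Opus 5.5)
We prove the contrapositive: the quantity $\frac{1}{m}\mathcal{A}_{\bm{\phi}^m}(\bm{p})$ is an invariant of the equivalence relation defining geometric distinctness. So we show that if $\bm{p}_1$ and $\bm{p}_2$ are not geometrically distinct, then the two normalized actions coincide. Since the equivalence relation is generated by cyclic permutation and iteration, it suffices to check that each generating operation, and its inverse, preserves the normalized action. For inverses we use the symmetry of equality.

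\textbf{Key point: the time-shifts are well defined.} On $\mathbb{R}^{2n+1}$ the Reeb flow of $\alpha_0$ is $\varphi^{\alpha_0}_t(x,y,\theta) = (x,y,\theta+t)$, which acts freely. Hence for given $p_j$ and $p_{j+1}$ the time-shift $t_j$ with $p_{j+1} = \varphi^{\alpha_0}_{-t_j}\circ\phi_j(p_j)$ is unique. On $\mathbb{R}^{2n}\times S^1$ uniqueness fails, and one works instead with the lift $\bm{\phi}_{\mathbb{R}}$ and a lift $\bm{p}_{\mathbb{R}}$. Here I must check that the action does not depend on the chosen lift. Changing $p_{0,\mathbb{R}}$ by an integer translation $\theta\mapsto\theta+N$ commutes with every $(\phi_j)_{\mathbb{R}}$, since these are lifts of maps of the quotient. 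So the whole chain of lifts is translated by $N$ and the $t_j$ are unchanged. The closing condition $p_{k,\mathbb{R}} = p_{0,\mathbb{R}}$ determines the total $\sum t_j$ once $p_{0,\mathbb{R}}$ is fixed. Changing the lifts of the intermediate $p_j$ only shifts one $t_j$ up by an integer and the next $t_{j+1}$ down by the same integer, so the sum is preserved. I expect this well-definedness check to be the only mildly delicate step. The rest is bookkeeping.

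\textbf{Iteration.} Let $\bm{p}$ be a translated chain of $\bm{\phi}^m$ with time-shifts $\bm{t} = (t_0,\dots,t_{mk-1})$. Then $\bm{p}^l$ is a translated chain of $\bm{\phi}^{lm}$, and its time-shifts are $\bm{t}$ repeated $l$ times. By uniqueness of time-shifts (or by lifting consistently in the $S^1$ case), this gives
\[
\mathcal{A}_{\bm{\phi}^{lm}}(\bm{p}^l) = l\,\mathcal{A}_{\bm{\phi}^m}(\bm{p}),
\]
and therefore
\[
\frac{1}{lm}\,\mathcal{A}_{\bm{\phi}^{lm}}(\bm{p}^l) = \frac{1}{m}\,\mathcal{A}_{\bm{\phi}^m}(\bm{p}).
\]
In the $S^1$ case, lift $\bm{p}^l$ as $l$ consecutive copies of a lift of $\bm{p}$. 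This is consistent because the lifted chain of $\bm{p}$ closes up.

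\textbf{Cyclic permutation.} Write a chain of period $m$ as $(\bm{p}_0,\dots,\bm{p}_{m-1})$ with blocks $\bm{p}_i$ of length $k$. Permuting cyclically by one block keeps the pairing between points and the maps $\phi_j$, because the sequence $\bm{\phi}^m$ is itself $k$-periodic. So the permuted chain is a translated chain of $\bm{\phi}^m$ whose time-shifts are the same $mk$ numbers, cyclically permuted. Their sum, and hence the action, is unchanged, and the period $m$ is unchanged. In the $S^1$ case, take the correspondingly permuted lift.

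\textbf{Conclusion.} Both generators preserve the normalized action. Therefore equivalent chains have equal normalized action, which proves the lemma.
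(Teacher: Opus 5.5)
Your proof is correct and is the direct verification the paper has in mind; the paper gives no argument beyond saying the lemma ``follows directly from the definitions.'' Your key step is that $\frac{1}{m}\mathcal{A}_{\bm{\phi}^m}(\bm{p})$ is invariant under both generators of the equivalence relation, since iteration repeats the time-shifts $l$ times and block-wise cyclic permutation only permutes them. Your extra check that the action on $\mathbb{R}^{2n}\times S^1$ does not depend on the chosen lift is a worthwhile detail the paper leaves implicit: an integer change in the lift of $p_j$ shifts $t_{j-1}$ and $t_j$ by opposite amounts, so the sum is unchanged.
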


In order to prove Theorem \ref{theorem: main}
in the case of $(\mathbb{R}^{2n} \times S^1, \xi_0)$,
we define spectral selectors $c_-$ and $c_+$
for sequences of compactly supported contactomorphisms
contact isotopic to the identity,
detecting flat translated chains.
To do so,
given a sequence $\bm{\phi} = (\phi_0, \cdots, \phi_{k-1})$
with $k$ odd
and special generating functions quadratic at infinity
$F_j: \mathbb{R}^{2n+1} \times \mathbb{R}^{N_j} \rightarrow \mathbb{R}$
for the $\phi_j$'s,
we define a function
$F_0 \,\sharp\, \cdots \,\sharp\, F_{k-1}:
\mathbb{R}^{(2n+1)k} \times \mathbb{R}^{N_0 + \cdots + N_{k-1}}
\rightarrow \mathbb{R}$
whose critical points are in bijection
with the flat translated chains of $\bm{\phi}_{\mathbb{R}}$,
with critical values given by the actions 
(Proposition \ref{proposition: composition formula contact flat}).
We compactify $F_0 \,\sharp\, \cdots \,\sharp\, F_{k-1}$
to a function
$$
\overline{F_0 \,\sharp\, \cdots \,\sharp\, F_{k-1}}:
S^{2n} \times \mathbb{T}^k \times \mathbb{R}^{2n(k-1)}
\times \mathbb{R}^{N_0 + \cdots + N_{k-1}} \rightarrow \mathbb{R}
$$
that is quadratic at infinity over $S^{2n} \times \mathbb{T}^k$
(Proposition \ref{proposition: quadratic at infinity flat chains}).
The set of critical values
of $\overline{F_0 \,\sharp\, \cdots \,\sharp\, F_{k-1}}$
is then equal to the (flat) action spectrum of $\bm{\phi}$,
i.e.\ the set $\mathcal{A}_{\bm{\phi}}$
of actions $\mathcal{A}_{\bm{\phi}} (\bm{p})$
of all the flat translated chains $\bm{p}$ of $\bm{\phi}$.
We define $c_- (\bm{\phi})$ and $c_+ (\bm{\phi})$
to be the spectral selectors
of $\overline{F_0 \,\sharp\, \cdots \,\sharp\, F_{k-1}}$
with respect to the unit class
and the dual of the fundamental class
of $S^{2n} \times \mathbb{T}^k$
respectively.
For a sequence $\bm{\phi} = (\phi_0, \cdots, \phi_{k-1})$
with $k$ even
we then define $c_{\pm} (\bm{\phi}) = c_{\pm} (\bm{\phi}, \id)$.

We say that a sequence
$\bm{\phi} = (\phi_0, \cdots, \phi_{k-1})$
is \emph{dynamically trivial}
if each $\phi_j$ is a strict contactomorphism
(i.e.\ it preserves the contact form $\alpha_0$)
and $\phi_{k-1} \circ \cdots \circ \phi_0 = \id$.
In Section \ref{section: spectral selectors}
we prove the following properties
of the spectral selectors $c_{\pm}$.

\begin{prop}\label{proposition: properties spectral selectors intro}
The spectral selectors $c_{\pm}$ for sequences
of compactly supported contactomorphisms
of $(\mathbb{R}^{2n} \times S^1, \xi_0)$
contact isotopic to the identity
satisfy the following properties:
\renewcommand{\theenumi}{\roman{enumi}}
\begin{enumerate}
\item \label{properties spectral sel intro: spectrality} \emph{Spectrality:}
$c_{\pm} (\bm{\phi}) \in \mathcal{A}_{\bm{\phi}}$,
in particular $c_{\pm} (\bm{\id}) = 0$.
\item \label{properties spectral sel intro: sign} \emph{Sign:}
$c_- (\bm{\phi}) \leq 0$ and $c_+ (\bm{\phi}) \geq 0$.
\item \label{properties spectral sel intro: non-degeneracy} \emph{Non-degeneracy:}
$c_- (\bm{\phi}) = c_+ (\bm{\phi}) = 0$
if and only if $\bm{\phi}$ is dynamically trivial.
\item \label{properties spectral sel intro: monotonicity} \emph{Monotonicity:}
if $\bm{\phi} \leq \bm{\psi}$
then $c_{\pm}(\bm{\phi}) \leq c_{\pm} (\bm{\psi})$.
\item \label{properties spectral sel intro: decreasing sequence}
\emph{Stabilization:}
$c_{\pm} (\bm{\phi}, \id) = c_{\pm} (\bm{\phi})$.
\item \label{properties spectral sel intro: relation symplectic}
\emph{Relation with the symplectic spectral selectors:}
For every sequence $\bm{\varphi} = (\varphi_0, \cdots, \varphi_{k-1})$
of compactly supported Hamiltonian diffeomorphisms
of $(\mathbb{R}^{2n}, \omega_0)$,
we have
$$
c_{\pm} (\widetilde{\bm{\varphi}})
= c_{\pm} (\varphi_{k-1} \circ \cdots \circ \varphi_0) \,,
$$
where $\widetilde{\bm{\varphi}}$ is the sequence
formed by the lifts of the $\varphi_j$'s
to $(\mathbb{R}^{2n} \times S^1, \xi_0)$.
\end{enumerate}
\end{prop}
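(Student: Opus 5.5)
The plan is to derive each property from the construction of $c_\pm(\bm{\phi})$ as minimax values of $\overline{F_0 \,\sharp\, \cdots \,\sharp\, F_{k-1}}$ over $B = S^{2n} \times \mathbb{T}^k$, using the standard toolkit for functions quadratic at infinity: invariance under stable equivalence and fibered diffeomorphism, continuity and monotonicity in families, the restriction inequality $c(1,F) \le c(1,F|_{b}) \le c(\mu,F)$ for a point $b \in B$, and Lusternik--Schnirelmann. By definition it suffices to treat $k$ odd. Stabilization is proved first, since for $k$ even it is what makes the definition $c_\pm(\bm{\phi}) = c_\pm(\bm{\phi},\id)$ consistent with odd-length sequences.

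\emph{Spectrality} follows from Proposition~\ref{proposition: composition formula contact flat} and Proposition~\ref{proposition: quadratic at infinity flat chains}. Critical values of the compactified function are exactly the actions of flat translated chains, and minimax values of functions quadratic at infinity are critical values. For $\bm{\id}$ I would take every $F_j$ to be the standard quadratic form generating the identity. I expect that a fiberwise linear change of variables then turns $\sharp$ into a nondegenerate quadratic form in the fiber variables, independent of the base point. Its sublevel sets are then products of $B$ with a fixed pair, so both selectors vanish.

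\emph{Sign}: choose $b = (\infty, \bm{t})$ with $\infty \in S^{2n}$. There all $p_j$ lie outside the supports, so the restricted function coincides with the one built from identities. Its selector is $0$ by the computation above, and the restriction inequality gives $c_- \le 0 \le c_+$.

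\emph{Monotonicity}: for $\bm{\phi} \le \bm{\psi}$, connect each pair by non-negative contact isotopies $\phi_j^s$ and take a smooth family of generating functions $F_j^s$. The variational formula should give $\partial_s F_j^s$ at critical points equal to minus a positive multiple of the non-negative Hamiltonian at the corresponding point (with our sign conventions). Hence $\partial_s (F_0^s \sharp \cdots \sharp F_{k-1}^s)$ has a sign, and the usual argument (deform to a family that is monotone everywhere, or use that critical values move monotonically) gives $c_\pm(\bm{\phi}) \le c_\pm(\bm{\psi})$.

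\emph{Stabilization} (for $k$ odd, comparing $\bm{\phi}$ with $(\bm{\phi},\id,\id)$) is the step I expect to be the main obstacle. In $F_0 \sharp \cdots \sharp F_{k-1} \sharp Q \sharp Q$ the extra $\mathbb{R}^{2n}$ variables appear in a nondegenerate quadratic form. I would eliminate them by a fiber-preserving diffeomorphism. The two extra time-shifts should then decouple as a factor over $\mathbb{T}^2$ that is constant on critical sets. One then has to check, via Künneth, that the classes $1 \otimes 1$ and $\mu \otimes [\mathbb{T}^2]^*$ reproduce $c_\pm(\bm{\phi})$. The delicate point is that the compactification over $\mathbb{T}^{k+2}$ must be compatible with this splitting; I would try to exhibit an explicit fiberwise homotopy through functions quadratic at infinity with constant critical values.

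\emph{Relation with symplectic selectors}: for lifts, $g_j \equiv 0$ and the $\theta$-dependence of the generating functions is trivial. The time-shift variables then separate and the remaining function is Viterbo's composition-formula generating function for $\varphi_{k-1}\circ\cdots\circ\varphi_0$, up to a quadratic form. Stable-equivalence invariance then gives the equality.

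\emph{Non-degeneracy}: if $\bm{\phi}$ is dynamically trivial, then $g_j \equiv 0$ and the composition is the identity. I would deform it through dynamically trivial sequences to $\bm{\id}$ (strict isotopies), keeping the spectrum in the discrete set $\mathbb{Z}$ of actions. Continuity then forces $c_\pm = 0$. Conversely, if $c_- = c_+ = 0$, Lusternik--Schnirelmann (cup-length of $B$) shows that the critical set at level $0$ is homologically essential, and I expect it to project onto $S^{2n}$. Hence every $p_0$ starts a flat chain of action $0$. Differentiating this family should show that each $g_j$ vanishes and that the composition is the identity.
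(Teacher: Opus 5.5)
Your argument for the converse in \emph{non-degeneracy} has a genuine gap. Knowing only that the level-$0$ critical set projects onto $S^{2n}$ gives that every $p_0$ starts \emph{some} flat translated chain of action $0$. When $k\ge 2$ this does not imply dynamical triviality, so no ``differentiation'' can close the argument.

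For example, let $\bm{\phi}=(\id,\chi,\id)$ with $\chi\neq\id$ supported in a small ball whose closure contains no whole Reeb circle. Given any $p_0$, pick $t_0$ with $p_1=\varphi^{\alpha_0}_{-t_0}(p_0)$ outside the support of $\chi$, and set $t_1=0$, $p_2=p_1$, $t_2=-t_0$. This is a flat translated chain of action $0$ through $p_0$, yet $\bm{\phi}$ is not dynamically trivial.

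The missing idea is to use the full strength of Proposition~\ref{proposition: properties spectral selectors fqi}(\ref{properties spectral sel gf: non-degeneracy}), i.e.\ Lusternik--Schnirelmann for the top class $\mu_B$. It says the level-$0$ critical set projects onto the \emph{whole} base $S^{2n}\times\mathbb{T}^k$, including the coordinates $\theta_1,\dots,\theta_{k-1}$. You then prescribe each $\theta_j$ to be the $\theta$-component of $(\phi_{j-1})_{\mathbb{R}}\circ\cdots\circ(\phi_0)_{\mathbb{R}}(z_0,\theta_0)$. Since the Reeb flow moves only $\theta$, this forces the chain to be the actual orbit, with $t_0=\dots=t_{k-2}=0$. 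Action $0$ then forces $t_{k-1}=0$, flatness gives $g_j=0$ along every orbit, and Lemma~\ref{lemma: dynamically trivial} concludes.

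The forward direction is also not sound as written. A path of dynamically trivial sequences from $\bm{\phi}$ to $\bm{\id}$ through strict contactomorphisms is not available in general: nothing guarantees that strict contactomorphisms contact isotopic to the identity are isotopic to it through strict ones. Continuity of $c_\pm$ in $\bm{\phi}$ is not established either. The detour is unnecessary anyway. Actions are computed for $\bm{\phi}_{\mathbb{R}}$, which is still dynamically trivial, and strict maps commute with the Reeb flow, so $p_0=\varphi^{\alpha_0}_{-\sum t_j}(p_0)$. The Reeb flow of $\mathbb{R}^{2n+1}$ has no closed orbits, so every chain has action $0$. The spectrum is therefore $\{0\}$, not merely a subset of $\mathbb{Z}$, and spectrality gives $c_\pm=0$.

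\emph{Stabilization} is left unproved, and your description of it is inaccurate. In $(F_0\,\sharp\cdots\sharp\,F_{k-1}\,\sharp\,0\,\sharp\,0)\circ B^{-1}$ the new variable $z_k$ enters $F_{k-1}\big(z_0+\frac{z_{k-1}+z_k}{2},\theta_{k-1},\zeta_{k-1}\big)$, so the extra variables do not form a quadratic form. Your fallback is exactly the paper's route. The family $G_s$ replaces $z_k$ by $sz_k$ in $F_{k-1}$ and in $\langle iz_{k-1},z_k\rangle$. One shows $z_k=z_{k+1}=0$ at all critical points of every $G_s$, so the critical values are constant and Lemma~\ref{lemma: continuation} applies; the dummy $\theta_k,\theta_{k+1}$ are handled by Lemma~\ref{lemma: product}.

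Your elimination idea can also be made to work. The variable $z_{k+1}$ enters only through $\frac12\langle iz_k,z_{k+1}\rangle$. Write the remaining terms as $H(z_k)=H(0)+\langle z_k,R\rangle$. The triangular change $z_{k+1}\mapsto z_{k+1}+2iR$ is then a fibre-preserving equivalence to $H(0)\oplus\frac12\langle iz_k,z_{k+1}\rangle$, and it extends over $p_\infty$ because $F_{k-1}$ is special.

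Your proof of \emph{sign} via the restriction inequality at $(p_\infty,\bm{t})$ is a valid alternative to the paper's sandwich between lifts of Hamiltonian sequences. In \emph{monotonicity}, the derivative at critical points must be $\ge 0$, not ``minus'', to get $c_\pm(\bm{\phi})\le c_\pm(\bm{\psi})$. The paper avoids families entirely via Proposition~\ref{proposition: monotonicity}.
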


Using (\ref{properties spectral sel intro: monotonicity})
and (\ref{properties spectral sel intro: relation symplectic})
and the analogous statement for spectral selectors
of Hamiltonian diffeomorphisms
(Lemma \ref{lemma: bound symplectic}),
in Section \ref{section: spectral selectors}
we deduce the following result.

\begin{lemma}[Upper bound]\label{lemma: bound contact}
For any sequence $\bm{\phi}$
of compactly supported contactomorphisms of $(\mathbb{R}^{2n} \times S^1, \xi_0)$
contact isotopic to the identity,
the sets $\{\, c_+ (\bm{\phi}^m) \,,\,
m \in \mathbb{Z}_{> 0} \,\}$
and $\{\, \lvert \, c_- (\bm{\phi}^m) \,\lvert \,,\,
m \in \mathbb{Z}_{> 0} \,\}$
are bounded from above.
\end{lemma}

The main lines of the proof of Theorem \ref{theorem: main}
in the case of $(\mathbb{R}^{2n} \times S^1, \xi_0)$
are then as follows.
If a sequence $\bm{\phi} = (\phi_0, \cdots, \phi_{k-1})$ is dynamically trivial
then it has infinitely many geometrically distinct flat translated chains
in the interior of the support,
indeed for any point $(z_0, \theta_0)$ of $\mathbb{R}^{2n} \times S^1$
the $k$-tuple
\[
\big( (z_0, \theta_0) \,,\, \phi_0 \, (z_0, \theta_0) \,,\, \cdots \,,\,
\phi_{k-2} \circ \cdots \circ \phi_0 \, (z_0, \theta_0) \big)
\]
is a flat translated chain of $\bm{\phi}$.
Since the iterations $\bm{\phi}^m$
of a dynamically trivial sequence $\bm{\phi}$
are also dynamically trivial,
the conclusions of Theorem \ref{theorem: main}
are trivially satisfied in this case.
Assume now that $\bm{\phi}$ is not dynamically trivial,
and denote by $M_{\bm{\phi}}$ the set of positive integers $m$
such that $\bm{\phi}^m$ is not dynamically trivial.
As we will see,
$M_{\bm{\phi}}$ is an infinite set.
Suppose also that $\bm{\phi}$ is non-negative.
Then (\ref{properties spectral sel intro: spectrality}),
(\ref{properties spectral sel intro: sign}),
(\ref{properties spectral sel intro: non-degeneracy})
and (\ref{properties spectral sel intro: monotonicity})
imply that $c_+ (\bm{\phi}^m) > 0$
for all $m \in M_{\bm{\phi}}$.
By (\ref{properties spectral sel intro: spectrality}),
for any $m \in M_{\bm{\phi}}$ there is a flat translated chain
$\bm{p}_m$ of $\bm{\phi}^m$ with
$\mathcal{A}_{\bm{\phi}^m} (\bm{p}_m) = c_+ (\bm{\phi}^m)$.
As in \cite{Viterbo},
Lemmas \ref{lemma: action relation for not geometrically distinct chains}
and \ref{lemma: bound contact}
then formally imply that infinitely many
of the translated chains $\bm{p}_m$
are geometrically distinct.
Moreover,
we will see that
property (\ref{properties spectral sel intro: decreasing sequence})
allows to obtain the result on linear growth.
For a general (i.e.\ not necessarily non-negative)
sequence $\bm{\phi}$,
the existence of infinitely many geometrically distinct
periodic flat translated chains
is obtained by an argument similar to the one in the non-negative case,
using that there exists a sequence $l(j) \to \infty$ in $M_{\bm{\phi}}$
such that either $c_+ (\bm{\phi}^{l(j)}) > 0$ for all $j$
or $c_- (\bm{\phi}^{l(j)}) < 0$ for all $j$
(which follows from the property
(\ref{properties spectral sel intro: non-degeneracy})).
We refer to Section \ref{section: proof}
for the complete arguments,
as well as for the observation that
Theorem \ref{theorem: main}
in the case of $(\mathbb{R}^{2n+1}, \xi_0)$
can easily be deduced
from the case of $(\mathbb{R}^{2n} \times S^1, \xi_0)$.

\begin{rmk}\label{remark: symmetric translated chains}
In \cite{FSZ},
the authors defined functions
detecting \emph{symmetric} translated chains
of compactly supported contactomorphisms
of $(\mathbb{R}^{2n} \times S^1, \xi_0)$
contact isotopic to the identity,
and used them to construct
$\mathbb{Z}_k$-equivariant homology groups
for contact isotopies and domains
of $(\mathbb{R}^{2n} \times S^1, \xi_0)$.
Using similar functions
for sequences of contactomorphisms,
we expect that it should be possible to define
spectral selectors detecting symmetric translated chains,
and use them to show that
every non-trivial sequence of compactly supported contactomorphisms
of $(\mathbb{R}^{2n+1}, \xi_0)$ or $(\mathbb{R}^{2n} \times S^1, \xi_0)$
contact isotopic to the identity
has infinitely many geometrically distinct
periodic symmetric translated chains
with respect to $\alpha_0$
in the interior of the support.
However,
in order to obtain a result on linear growth
analogous to the one in Theorem \ref{theorem: main}
we would need an analogue
of Proposition \ref{proposition: properties spectral selectors intro}
(\ref{properties spectral sel intro: decreasing sequence}),
but it is not clear to us whether this property
can be expected to hold
for spectral selectors detecting symmetric translated chains.
Indeed,
as we will see,
the proof of Proposition \ref{proposition: properties spectral selectors intro}
(\ref{properties spectral sel intro: decreasing sequence})
relies on the fact that,
for any sequence $\bm{\phi}$,
if $(\bm{p}, p)$ is a (flat) translated chain of $(\bm{\phi}, \id)$
then $\bm{p}$ is a (flat) translated chain of $\bm{\phi}$.
However,
if $(\bm{p}, p)$ is a symmetric translated chain
of $(\bm{\phi}, \id)$
then the translated chain $\bm{p}$ of $\bm{\phi}$
is in general not symmetric.
\end{rmk}

Periodic points of Hamiltonian diffeomorphisms
are the object of important conjectures in symplectic topology,
as the Conley and Hofer--Zehnder conjectures.
The analogy of Theorem \ref{theorem: main}
on periodic flat translated chains
of sequences of contactomorphisms of $(\mathbb{R}^{2n+1}, \xi_0)$
or $(\mathbb{R}^{2n} \times S^1, \xi_0)$
with Theorem \ref{theorem: Viterbo}
on periodic points of Hamiltonian diffeomorphisms
of $(\mathbb{R}^{2n}, \omega_0)$
suggests that similar conjectures might also hold,
on certain contact manifolds,
also for periodic flat (or maybe symmetric) translated chains.
In particular,
the possibility of proving an analogue of the Hofer--Zehnder conjecture
for (periodic) translated chains
of (sequences of) contactomorphisms
of real projective spaces
(or more general lens spaces)
will be explored in a forthcoming work.

The article is organized as follows.
In Section \ref{section: gfqi}
we recall the properties of generating functions quadratic at infinity
that are needed in the rest of the article,
and describe a formula to obtain generating functions quadratic at infinity
for odd compositions of compactly supported Hamiltonian diffeomorphisms
of $(\mathbb{R}^{2n}, \omega_0)$.
In Section \ref{section: functions translated chains}
we describe an analogous formula
to obtain functions quadratic at infinity
detecting flat translated chains
of sequences of compactly supported contactomorphisms
of $(\mathbb{R}^{2n} \times S^1, \xi_0)$
contact isotopic to the identity.
In Section \ref{section: spectral selectors}
we use such functions
to define spectral selectors
detecting flat translated chains,
and prove Proposition \ref{proposition: properties spectral selectors intro}
and Lemma \ref{lemma: bound contact}.
In Section \ref{section: proof}
we use all these ingredients
to prove Theorem \ref{theorem: main}.

\subsection*{Acknowledgments}
The second author thanks Viktor Ginzburg
for his question of 16 years ago.
We both thank Viktor Ginzburg and Basak Gurel
for all the discussions we have had since then.


\section{Generating functions quadratic at infinity}\label{section: gfqi}

In this section we recall the notion of generating functions
for compactly supported
Hamiltonian diffeomorphisms of $(\mathbb{R}^{2n}, \omega_0)$
and compactly supported contactomorphisms
of $(\mathbb{R}^{2n} \times S^1, \xi_0)$
contact isotopic to the identity,
following \cites{Viterbo, Bhupal, San11}
and the references therein.
We also describe a formula
to obtain generating functions quadratic at infinity
for odd compositions of compactly supported
Hamiltonian diffeomorphisms of $(\mathbb{R}^{2n}, \omega_0)$,
following essentially \cite{Allais} and \cite{FSZ}.

A function $F: E \rightarrow \mathbb{R}$
defined on the total space of a trivial vector bundle
$E = B \times \mathbb{R}^N \rightarrow B$
is a \emph{generating function}
if the differential $dF: E \rightarrow T^{\ast}E$
is transverse to the subbundle $N_E^{\ast}$ of $T^{\ast}E$
formed by the covectors that vanish on vertical vectors.
In this case, the space
$$
\Sigma_F = dF^{-1} \big( N_E^{\ast} \cap \im (dF) \big)
$$
of fibre critical points of $F$
is a submanifold of $E$,
of dimension equal to the dimension of $B$.
We endow the cotangent bundle $T^{\ast}B$
with its canonical symplectic form
$\omega_{\can} = d \lambda_{\can}$.
The map
$$
i_F: \Sigma_F \rightarrow T^{\ast}B
$$
that associates to
$e = (q, \zeta) \in \Sigma_F \subset B \times \mathbb{R}^N$
the covector $i_F(e) \in T_q^{\ast}B$
defined by $i_F(e) (X) = dF (\widehat{X})$,
where $\widehat{X}$ is any vector in $T_eE$
projecting to $X$,
is then an exact Lagrangian immersion,
with $i_F^{\;\ast} \, \lambda_{\can} = d ( \left. F \right\lvert_{\Sigma_F} )$.
We endow the $1$-jet bundle $J^1B = T^{\ast}B \times \mathbb{R}$
with its canonical contact structure
$\xi_{\can} = \ker \, (dw - \lambda_{\can})$,
where $w$ denotes the coordinate in $\mathbb{R}$.
The lift of $i_F: \Sigma_F \rightarrow T^{\ast}B$
to $(J^1B, \xi_{\can})$
is the Legendrian immersion
\[
j_F: \Sigma_F \rightarrow J^1B \,,\;
e \mapsto \big( i_F(e), F(e) \big) \,.
\]
If $i_F: \Sigma_F \rightarrow T^{\ast}B$ is an embedding
we say that $F$ is a generating function
for the Lagrangian submanifold $\im (i_F)$
of $(T^{\ast}B, \omega_{\can})$.
The map $i_F$ then induces a bijection
between the critical points of $F$
and the intersections of $\im (i_F)$ with the zero section.
Similarly, if $j_F: \Sigma_F \rightarrow J^1B$ is an embedding
we say that $F$ is a generating function
for the Legendrian submanifold $\im (j_F)$
of $(J^1B, \xi_{\can})$.
The map $j_F$ then induces a bijection
between the critical points of $F$
and the Reeb chords between $\im (j_F)$ and the zero section.

Suppose now that $B$ is compact.
A generating function
$F: E = B \times \mathbb{R}^N \rightarrow \mathbb{R}$
is said to be \emph{quadratic at infinity}
if there exists a non-degenerate quadratic form $F_{\infty}$ on $E$,
i.e.\ a map $F_{\infty}: E \rightarrow \mathbb{R}$
whose restriction to every fibre
is a non-degenerate quadratic form,
such that
$\partial_v (F - F_{\infty}): E \rightarrow E^{\ast}$
is bounded,
where $\partial_v$ denotes the vertical derivative.
It has been proved by Sikorav \cite{Sikorav87}
that every Lagrangian submanifold of $(T^{\ast}B, \omega_{\can})$
Hamiltonian isotopic to the zero section
has a generating function quadratic at infinity,
and by Chaperon \cite{Chaperon}
and Chekanov \cite{Chekanov}
that every Legendrian submanifold of $(J^1B, \xi_{\can})$
contact isotopic to the zero section
has a generating function quadratic at infinity.
Moreover,
we have the following uniqueness theorem.
Two functions $F_1: B \times \mathbb{R}^{N_1} \rightarrow \mathbb{R}$
and $F_2: B \times \mathbb{R}^{N_2} \rightarrow \mathbb{R}$
quadratic at infinity
are said to be \emph{equivalent}
if there are non-degenerate quadratic forms $Q_1$ and $Q_2$
and a fibre preserving diffeomorphism $\Phi$
such that $F_1 \oplus Q_1 = (F_2 \oplus Q_2) \circ \Phi$.
It has been proved by Viterbo and Th\'eret \cites{Viterbo, Theret}
that any two generating functions quadratic at infinity
for the same Lagrangian submanifold of $(T^{\ast}B, \omega_{\can})$
Hamiltonian isotopic to the zero section
are equivalent up to addition of a constant,
and that any two generating functions quadratic at infinity
for the same Legendrian submanifold of $(J^1B, \xi_{\can})$
contact isotopic to the zero section
are equivalent.

Consider now the standard symplectic Euclidean space
$(\mathbb{R}^{2n}, \omega_0)$,
and the product
\[
\big( \mathbb{R}^{2n} \times \mathbb{R}^{2n} \,,\,
- \omega_0 \oplus \omega_0\big) \,.
\]
We use complex coordinates $(z_1, \cdots, z_n)$
on $\mathbb{R}^{2n}$,
where $z_j = x_j + iy_j$.
The map
\[
\tau: \mathbb{R}^{2n}\times\mathbb{R}^{2n} \rightarrow T^{\ast}\mathbb{R}^{2n} \,,\;
\tau \, (z, Z) = \Big(\frac{z+Z}{2} \,,\, i \, (Z - z) \Big)
\]
is a symplectomorphism,
and sends the diagonal to the zero section.
For any symplectomorphism $\varphi$
of $(\mathbb{R}^{2n} , \omega_0)$
we denote by
\[
\Gamma_{\varphi}:
\mathbb{R}^{2n} \rightarrow T^{\ast}\mathbb{R}^{2n}
\]
the composition of the graph
\[
\gr (\varphi):\mathbb{R}^{2n} \rightarrow
\mathbb{R}^{2n} \times \mathbb{R}^{2n} \,,\;
p \mapsto \big( p, \varphi(p) \big)
\]
with $\tau$.
We say that $F: \mathbb{R}^{2n} \times \mathbb{R}^N \rightarrow \mathbb{R}$
is a generating function for $\varphi$
if it is a generating function for the Lagrangian submanifold
$L_{\varphi} = \im (\Gamma_{\varphi})$
of $(T^{\ast}\mathbb{R}^{2n}, \omega_{\can})$.
Then $i_F: \Sigma_F \rightarrow T^{\ast}\mathbb{R}^{2n}$
induces a diffeomorphism between $\Sigma_F$
and $L_{\varphi}$,
and the composition
\begin{equation*}
\begin{tikzcd}
{\Phi_F:\,\mathbb{R}^{2n}} \arrow[r, "\Gamma_{\varphi}"] & L_{\varphi} \arrow[r] &[3ex] \Sigma_F 
\end{tikzcd}
\end{equation*}
of the inverse of this diffeomorphism with $\Gamma_{\varphi}$
induces a bijection
between the set of fixed points of $\varphi$
and the set of critical points of $F$.
Under this bijection,
a critical point $(z, \zeta)$ of $F$
corresponds to the fixed point $z$ of $\varphi$.

If $\varphi$ is a compactly supported
Hamiltonian diffeomorphism of $(\mathbb{R}^{2n} , \omega_0)$
then the Lagrangian submanifold $L_{\varphi}$
of $( T^{\ast}\mathbb{R}^{2n}, \omega_{\can})$
extends to a Lagrangian submanifold $\overline{L_{\varphi}}$
of $( T^{\ast}S^{2n}, \omega_{\can})$,
Hamiltonian isotopic to the zero section,
by seeing $S^{2n}$ as the $1$-point compactification of $\mathbb{R}^{2n}$.
We denote by $p_{\infty}$ the point at infinity of $S^{2n}$,
and identify $\mathbb{R}^{2n}$
with $S^{2n} \smallsetminus \{p_{\infty}\}$
by the stereographic projection.
We say that $\overline{F}: S^{2n} \times \mathbb{R}^N \rightarrow \mathbb{R}$
is a generating function (quadratic at infinity) for $\varphi$
if it is a generating function (quadratic at infinity)
for $\overline{L_{\varphi}}$.
If $\overline{F}: S^{2n} \times \mathbb{R}^N \rightarrow \mathbb{R}$
is a generating function quadratic at infinity for $\varphi$
then we also say that the induced function
$F$ on $\mathbb{R}^{2n} \times \mathbb{R}^N$
(which is a generating function for $L_{\varphi}$,
hence for $\varphi$)
is a generating function quadratic at infinity for $\varphi$,
and denote by $F_{\infty}$
the quadratic form on $\mathbb{R}^{2n} \times \mathbb{R}^N$
induced by $\overline{F}_{\infty}$.
By \cites{Sikorav87, Viterbo, Theret},
any compactly supported Hamiltonian diffeomorphism
of $(\mathbb{R}^{2n}, \omega_0)$
has a generating function quadratic at infinity,
unique up to equivalence and addition of a constant.

We say that a generating function
$\overline{F}: S^{2n} \times \mathbb{R}^N \rightarrow \mathbb{R}$
for a compactly supported Hamiltonian diffeomorphism $\varphi$
of $(\mathbb{R}^{2n} , \omega_0)$
is normalized if it is zero at the critical point
corresponding to the intersection of $\overline{L_{\varphi}}$ 
with the zero section at the point at infinity
$p_{\infty}$ of $S^{2n}$
(equivalently,
if the induced function $F$ on $\mathbb{R}^{2n} \times \mathbb{R}^N$
is zero at the critical points
corresponding to the fixed points of $\varphi$
outside the support).
Recall that the symplectic action
of a fixed point $p$ of a compactly supported
Hamiltonian diffeomorphism $\varphi$ of $(\mathbb{R}^{2n}, \omega_0)$
is defined by
\[
\mathcal{A}_{\varphi}(p)
= \int_0^1 \big( \lambda_0 (X_t) + H_t \big)
\big( \varphi_t(p) \big) \, dt \,,
\]
where $\lambda_0 = \sum_{j=1}^n \frac{x_j dy_j - y_j dx_j}{2}$
is the standard Liouville form,
$\{ \varphi_t \}_{t \in [0,1]}$ is any compactly supported
Hamiltonian isotopy with $\varphi_1 = \varphi$,
$X_t$ is the vector field generating this isotopy
and $H_t$ is the associated compactly supported Hamiltonian function,
with the sign convention
$\iota_{X_t}\omega_0 = dH_t$.
The action spectrum of $\varphi$ is the set $\mathcal{A}_{\varphi}$
of actions of all the fixed points of $\varphi$.
If $\overline{F}: S^{2n} \times \mathbb{R}^N \rightarrow \mathbb{R}$
is a normalized generating function quadratic at infinity for $\varphi$
then its set of critical values
coincides with $\mathcal{A}_{\varphi}$
(see for instance \cite[Proposition 2.5]{FSZ}).

Consider now the standard contact Euclidean space
$\big( \mathbb{R}^{2n+1}, \xi_0 = \ker (\alpha_0) \big)$,
and the product
\[
\big( \, \mathbb{R}^{2n+1} \times \mathbb{R}^{2n+1} \times \mathbb{R}
\,,\, \ker \, ( \pr_2^{\ast} \alpha_0
- e^{\rho} \pr_1^{\ast} \alpha_0 ) \, \big)\,,
\]
where $\rho$ denotes the coordinate in $\mathbb{R}$
and $\pr_1$ and $\pr_2$ the projections
on the first and second factors respectively.
The map
\[
\underline{\tau}:
\mathbb{R}^{2n+1} \times \mathbb{R}^{2n+1} \times \mathbb{R}
\longrightarrow J^1\mathbb{R}^{2n+1}
\]
defined by
\[
\underline{\tau} \, (z, \theta, Z, \Theta, \rho) =
\Big(\, \frac{e^{\frac{\rho}{2}} z + Z}{2} \,,\, \theta \,,\,
i \, (Z - e^{\frac{\rho}{2}} z) \,,\,
e^{\rho} - 1 \,,\,
\Theta - \theta
+ \frac{ \langle \, i \,Z \,,\, e^{\frac{\rho}{2}} z \, \rangle }{2} \,\Big) \,,
\]
where we denote by $\langle \,\cdot\, \,,\, \,\cdot\, \rangle$
the Euclidean scalar product,
is a contactomorphism,
and sends the Legendrian diagonal
\[
\Delta = \{\, (z, \theta, z, \theta, 0) \,\}
\subset \mathbb{R}^{2n+1} \times \mathbb{R}^{2n+1} \times \mathbb{R}
\]
to the zero section.
This contactomorphism is strict
with respect to the contact forms
that we are considering:
the pullback by $\underline{\tau}$
of the canonical contact form $dw - \lambda_{\can}$
on $J^1\mathbb{R}^{2n+1}$
is equal to the contact form
$\pr_2^{\ast}\alpha_0 - e^{\rho} \pr_1^{\ast} \alpha_0$
on $\mathbb{R}^{2n+1} \times \mathbb{R}^{2n+1} \times \mathbb{R}$.
For any contactomorphism $\phi$
of $(\mathbb{R}^{2n+1}, \xi_0)$
we denote by
\[
\Gamma_{\phi}: \mathbb{R}^{2n+1} \rightarrow
J^1 \mathbb{R}^{2n+1}
\]
the composition of the Legendrian graph
\[
\gr(\phi): \mathbb{R}^{2n+1} \rightarrow
\mathbb{R}^{2n+1} \times \mathbb{R}^{2n+1} \times \mathbb{R} \,,\;
p \mapsto \big( p, \phi(p), g(p) \big) \,,
\]
where $g$ denotes the conformal factor of $\phi$
with respect to $\alpha_0$,
with $\underline{\tau}$.
We say that
$F: \mathbb{R}^{2n+1} \times \mathbb{R}^N \rightarrow \mathbb{R}$
is a generating function for $\phi$
if it is a generating function
for the Legendrian submanifold $\Lambda_{\phi} = \im (\Gamma_{\phi})$
of $( J^1\mathbb{R}^{2n+1}, \xi_{\can} )$.
Then $j_F: \Sigma_F \rightarrow J^1\mathbb{R}^{2n+1}$
induces a diffeomorphism between $\Sigma_F$
and $\Lambda_{\phi}$,
and the composition
\begin{equation}\label{equation: diffeomorphism gf contact}
\begin{tikzcd}
{\Psi_F:\,\mathbb{R}^{2n+1}} \arrow[r, "\Gamma_{\phi}"] & \Lambda_{\phi} \arrow[r] &[3ex] \Sigma_F 
\end{tikzcd}
\end{equation}
of the inverse of this diffeomorphism with $\Gamma_{\phi}$
induces a bijection
between the set of translated points of $\phi$
and the set of critical points of $F$.
Under this bijection,
a critical point $(z, \theta, \zeta)$ of $F$
corresponds to the translated point $(z, \theta)$ of $\phi$.
The critical value $F (z, \theta, \zeta)$
is equal to the action
of the translated point $(z, \theta)$,
i.e.\ the length of the Reeb chord
from $(z, \theta)$ to $\phi (z, \theta)$.

The next lemma,
which follows directly from the definitions,
will be used later.

\begin{lemma}\label{lemma: useful}
Let $F: \mathbb{R}^{2n+1} \times \mathbb{R}^N \rightarrow \mathbb{R}$
be a generating function for a contactomorphism
$\phi$ of $(\mathbb{R}^{2n+1}, \xi_0)$.
Let $(z, \theta, \zeta)$ be a fibre critical point of $F$,
and denote by $(z', \theta') 
\in \mathbb{R}^{2n+1}$
the image of $(z, \theta, \zeta)$
by the inverse of the diffeomorphism
\eqref{equation: diffeomorphism gf contact}.
Denote by $\phi_z$ and $\phi_{\theta}$
the components of $\phi$,
and by $g$ the conformal factor.
Then
$$
\begin{cases}
z =
\frac{e^{\frac{1}{2} g (z', \theta')} \, z'
\,+\, \phi_z (z', \theta')}{2} \\
\theta = \theta' \,,
\end{cases}
$$
\begin{equation*}
\begin{cases}
\frac{\partial F}{\partial z} \, (z, \theta, \zeta)
= i \; \big( \phi_z (z', \theta') 
- e^{\frac{1}{2} g (z', \theta')} \, z' \big) \\
\frac{\partial F}{\partial \theta} \, (z, \theta, \zeta)
= e^{g (z', \theta')} - 1 
\end{cases}    
\end{equation*}
and
$$
F (z, \theta, \zeta)
= \phi_{\theta} (z', \theta') - \theta'
+ \frac{1}{2} \; e^{\frac{1}{2} g (z', \theta')}
\; \big\langle\, i \, \phi_z (z', \theta') \,,\, z' \,\big\rangle \,.
$$
\end{lemma}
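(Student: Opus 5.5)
The plan is to unwind the definitions: compute $\Gamma_{\phi}$ explicitly, then compare it with the Legendrian lift $j_F$ at the point $(z,\theta,\zeta)$.

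Set $(z',\theta') = \Psi_F^{-1}(z,\theta,\zeta)$. By the definition of $\Psi_F$ in \eqref{equation: diffeomorphism gf contact}, this means $j_F(z,\theta,\zeta) = \Gamma_{\phi}(z',\theta')$ as points of $J^1\mathbb{R}^{2n+1}$. So the first step is to write both sides in coordinates $(q, p, w)$ on $J^1\mathbb{R}^{2n+1} = T^{\ast}\mathbb{R}^{2n+1}\times\mathbb{R}$, with $q=(q_z,q_\theta)$ and $p=(p_z,p_\theta)$.

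\begin{itemize}
\item On the side of $F$, the definition of $i_F$ and of the lift $j_F$ gives base point $(z,\theta)$, fibre component $\big(\frac{\partial F}{\partial z},\frac{\partial F}{\partial \theta}\big)(z,\theta,\zeta)$, and $w = F(z,\theta,\zeta)$. This uses that $(z,\theta,\zeta)\in\Sigma_F$, so the vertical derivative vanishes and $dF$ descends to the base.
\item On the side of $\phi$, we have $\Gamma_\phi = \underline{\tau}\circ\gr(\phi)$, where $\gr(\phi)(z',\theta') = \big(z',\theta',\phi_z(z',\theta'),\phi_\theta(z',\theta'),g(z',\theta')\big)$. Substituting $Z=\phi_z$, $\Theta=\phi_\theta$, $\rho = g$ into the formula for $\underline{\tau}$ gives
\[
\Big(\tfrac{e^{g/2}z'+\phi_z}{2},\ \theta',\ i(\phi_z - e^{g/2}z'),\ e^{g}-1,\ \phi_\theta-\theta'+\tfrac12\langle i\phi_z, e^{g/2}z'\rangle\Big).
\]
\end{itemize}

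Equating components yields the statement directly. The base components give $z$ and $\theta=\theta'$. The $p_z$ and $p_\theta$ components give $\partial F/\partial z$ and $\partial F/\partial\theta$. The $w$-component gives $F$, after pulling the scalar $e^{g/2}$ out of the real scalar product.

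The only point requiring care is checking that the coordinate conventions match: the fibre coordinate of $T^{\ast}\mathbb{R}^{2n+1}$ must be identified with the gradient via the Euclidean product, and the complex notation $i(\cdot)$ for vectors in $\mathbb{R}^{2n}$ must be read the same way. These are exactly the conventions implicit in the definitions of $\underline{\tau}$ and $i_F$, so no real obstacle is expected.
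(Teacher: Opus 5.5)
Your proposal is correct and matches the paper, which simply states that the lemma follows directly from the definitions: writing $j_F(z,\theta,\zeta)=\Gamma_\phi(z',\theta')=\underline{\tau}\big(z',\theta',\phi_z(z',\theta'),\phi_\theta(z',\theta'),g(z',\theta')\big)$ and comparing the base, fibre and $w$-components is exactly that unwinding. Your explicit formula for $\underline{\tau}\circ\gr(\phi)$ is right, and the identities stated in the lemma follow from it.
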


Let $\phi$ be a compactly supported contactomorphism
of $(\mathbb{R}^{2n} \times S^1, \xi_0)$
contact isotopic to the identity,
and denote by $\phi_{\mathbb{R}}$ its lift
to $(\mathbb{R}^{2n+1}, \xi_0)$,
i.e.\ the contactomorphism of $(\mathbb{R}^{2n+1}, \xi_0)$
that projects to $\phi$ and is the identity
over the complement of the support of $\phi$.
We say that
$F: \mathbb{R}^{2n+1} \times \mathbb{R}^N \rightarrow \mathbb{R}$
is a generating function for $\phi$
if it is a generating function
for the Legendrian submanifold $\Lambda_{\phi_{\mathbb{R}}}$
of $(J^1 \mathbb{R}^{2n+1}, \xi_0)$.
The latter extends to a Legendrian submanifold 
of $\big( J^1 (S^{2n} \times \mathbb{R}) , \xi_{\can} \big)$,
which descends to a Legendrian submanifold $\overline{\Lambda_{\phi}}$
of $\big( J^1 (S^{2n} \times S^1) , \xi_{\can} \big)$,
contact isotopic to the zero section.
We say that
$\overline{F}: (S^{2n} \times S^1) \times \mathbb{R}^N \rightarrow \mathbb{R}$
is a generating function (quadratic at infinity) for $\phi$
if it is a generating function (quadratic at infinity)
for $\overline{\Lambda_{\phi}}$.
If $\overline{F}:
(S^{2n} \times S^1) \times \mathbb{R}^N \rightarrow \mathbb{R}$
is a generating function quadratic at infinity for $\phi$
then we also say that the induced function $F$
on $\mathbb{R}^{2n + 1} \times \mathbb{R}^N$
(which is a generating function for $\Lambda_{\phi_{\mathbb{R}}}$,
hence for $\phi$)
is a generating function quadratic at infinity for $\phi$,
and denote by $F_{\infty}$ the quadratic form
on $\mathbb{R}^{2n + 1} \times \mathbb{R}^N$
induced by $\overline{F}_{\infty}$.
The function $F$ is invariant by the action of $\mathbb{Z}$
on $\mathbb{R}^{2n + 1} \times \mathbb{R}^N$
generated by the map
\begin{equation}\label{equation: action Z for F}
(z, \theta, \zeta)
\mapsto (z, \theta + 1, \zeta) \,.
\end{equation}
By \cites{Chaperon, Chekanov, Viterbo, Theret},
any compactly supported contactomorphism of $(\mathbb{R}^{2n} \times S^1, \xi_0)$
contact isotopic to the identity
has a generating function quadratic at infinity,
unique up to equivalence.

The action of a translated point $p$
of a compactly supported contactomorphism $\phi$
of $(\mathbb{R}^{2n} \times S^1, \xi_0)$
is defined to be the action
of any point of $\mathbb{R}^{2n+1}$
projecting to $p$,
seen as a translated point of $\phi_{\mathbb{R}}$.
The action spectrum of $\phi$
is the set $\mathcal{A}_{\phi} := \mathcal{A}_{\phi_{\mathbb{R}}}$
of actions of all the translated points of $\phi$.
If $\overline{F}: S^{2n} \times S^1 \times \mathbb{R}^N \rightarrow \mathbb{R}$
is a generating function quadratic at infinity for $\phi$
then its set of critical values
coincides with $\mathcal{A}_{\phi}$.

We say that a generating function quadratic at infinity $F$
for a compactly supported Hamiltonian diffeomorphism
of $(\mathbb{R}^{2n}, \omega_0)$
or for a compactly supported contactomorphism
of $(\mathbb{R}^{2n} \times S^1, \xi_0)$
contact isotopic to the identity
is \emph{special}
if the quadratic form $F_{\infty}$ does not depend
on the base variable
and $F = F_{\infty}$ outside a compact set.
In the case of Hamiltonian diffeomorphisms,
such $F$ is then normalized.
By \cite[Lemma 2.10]{FSZ},
all compactly supported Hamiltonian diffeomorphisms
of $(\mathbb{R}^{2n}, \omega_0)$
and all compactly supported contactomorphisms
of $(\mathbb{R}^{2n} \times S^1, \xi_0)$
contact isotopic to the identity
have a special generating function quadratic at infinity.
Moreover,
we have the following result,
proved in \cite[Lemma 2.11]{FSZ}.
Recall from \cites{Viterbo, San11} that the groups
of compactly supported Hamiltonian diffeomorphisms
of $(\mathbb{R}^{2n}, \omega_0)$
and of compactly supported contactomorphisms
of $(\mathbb{R}^{2n} \times S^1, \xi_0)$
contact isotopic to the identity
admit a partial order,
defined by posing $\phi \leq \psi$
if $\psi \circ \phi^{-1}$ is the time-$1$ map
of the flow of a non-negative compactly supported Hamiltonian function.

\begin{prop}\label{proposition: monotonicity}
If $\phi$ and $\psi$ are compactly supported
Hamiltonian diffeomorphisms of $(\mathbb{R}^{2n}, \omega_0)$
or compactly supported contactomorphisms
of $(\mathbb{R}^{2n} \times S^1, \xi_0)$
contact isotopic to the identity
with $\phi \leq \psi$
then there are special generating functions quadratic at infinity
$F$ and $G$ for $\phi$ and $\psi$ respectively
that are defined on the same domain
and satisfy $F \leq G$.
\end{prop}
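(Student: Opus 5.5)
The plan is to connect $\phi$ to $\psi$ by a non-negative path and to follow a continuous family of special generating functions along it, arranging that the family increases in the parameter. By definition of the partial order, $\psi\circ\phi^{-1}$ is the time-$1$ map of the flow $\{\varphi^H_s\}$ of a compactly supported Hamiltonian $H_s\ge 0$. Set $\phi_s=\varphi^H_s\circ\phi$ for $s\in[0,1]$, so that $\phi_0=\phi$ and $\phi_1=\psi$. Each $\phi_s$ is again compactly supported and (contact) Hamiltonian isotopic to the identity.

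\textbf{Step 1: a family of special generating functions on a common domain.} I would run the construction of \cite[Lemma 2.10]{FSZ}, i.e.\ the Sikorav/Chekanov--Chaperon broken-geodesic construction, with the parameter $s$ included. This should give a smooth family $F_s:B\times\mathbb{R}^N\to\mathbb{R}$, with $B=\mathbb{R}^{2n}$ or $\mathbb{R}^{2n+1}$, such that:
\begin{itemize}
\item each $F_s$ generates $L_{\phi_s}$, respectively $\Lambda_{\phi_s}$;
\item all the $F_s$ have the same base-independent quadratic form $F_\infty$;
\item $F_s=F_\infty$ outside a compact set independent of $s$.
\end{itemize}
A single fibre dimension $N$ works for all $s$ by compactness of $[0,1]$, stabilizing by a fixed quadratic form if necessary. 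In the symplectic case the family is automatically normalized, since it is special.

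\textbf{Step 2: Hamilton--Jacobi formula on the fibre critical set.} Differentiating in $s$ at a point $e\in\Sigma_{F_s}$ corresponding to $p\in B$ via $\Phi_{F_s}$ (resp.\ $\Psi_{F_s}$), the vertical derivative vanishes there. The variation $\partial_sF_s(e)$ therefore depends only on the infinitesimal deformation of the Lagrangian (resp.\ Legendrian) graph. In the contact case, using the explicit formulas of Lemma~\ref{lemma: useful}, one computes
\[
\partial_sF_s(e)=c_s(p)\,H_s\big(\phi_s(p)\big)
\]
for a positive factor $c_s(p)$, which is a power of $e^{g}$ coming from the conformal factor. In the symplectic case the factor is simply $1$, up to the sign convention $\iota_{X}\omega_0=dH$. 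Hence $\partial_sF_s\ge 0$ on $\Sigma_{F_s}$.

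\textbf{Step 3 (main obstacle): making $\partial_s F_s\ge0$ everywhere on the domain.} Step~2 only controls $\partial_s F_s$ along $\Sigma_{F_s}$, and I expect this globalization to be the hard part. Choose a smooth family of functions $h_s\ge0$ on $E$ such that:
\begin{itemize}
\item $h_s$ equals $\partial_sF_s$ on $\Sigma_{F_s}$;
\item $h_s$ has compact support in a fixed compact set.
\end{itemize}
Such an $h_s$ can be taken, for instance, as $\chi\cdot(c_sH_s)\circ\pi_s$, where $\pi_s$ is a tubular retraction onto $\Sigma_{F_s}$ and $\chi\ge0$ is a cutoff equal to $1$ near $\Sigma_{F_s}$. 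The difference $k_s=\partial_sF_s-h_s$ vanishes on $\Sigma_{F_s}$, and this is where I would use a parametrized Moser argument. Because $\Sigma_{F_s}$ is cut out transversally by $\partial_vF_s=0$, one can write $k_s=\langle \partial_vF_s,Y_s\rangle$ for a vertical, compactly supported time-dependent vector field $Y_s$. This would be done with a Hadamard-type division on a neighbourhood of $\Sigma_{F_s}$; away from that neighbourhood one divides directly, since $\partial_vF_s\ne0$ there. The division has to be done so that $Y_s=0$ wherever $F_s=F_\infty$. Let $\Theta_s$ be the fibre-preserving flow of $-Y_s$, and put $G_s=F_s\circ\Theta_s$. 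Then $G_s$ is still a special generating function for $\phi_s$, and
\[
\partial_sG_s=(\partial_sF_s-\langle\partial_vF_s,Y_s\rangle)\circ\Theta_s=h_s\circ\Theta_s\ge0 .
\]
Concretely, the checks needed at this step are:
\begin{itemize}
\item $Y_s$ can be chosen compactly supported and smooth in $s$, using that $F_s-F_\infty$ has compact support and that $\partial_vF_\infty$ is a non-degenerate linear map on the fibres;
\item in the $\mathbb{R}^{2n}\times S^1$ case, everything can be made invariant under the $\mathbb{Z}$-action \eqref{equation: action Z for F}, by performing the construction on the quotient.
\end{itemize}

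\textbf{Conclusion.} Take $F=G_0$ and $G=G_1$. Both are special generating functions quadratic at infinity for $\phi$ and $\psi$ respectively, they are defined on the same domain, and integrating $\partial_sG_s\ge0$ over $s\in[0,1]$ gives $F\le G$.
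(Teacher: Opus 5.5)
The paper does not prove this statement. It quotes it from \cite[Lemma 2.11]{FSZ} and uses it as a black box, so there is no in-paper argument to compare yours with. Your route is sound: a Hamilton--Jacobi identity on the fibre critical set, made global by a fibrewise Moser isotopy. Steps 2 and 3 check out.

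In Step 2 the factor is in fact $c_s\equiv 1$. Take the convention $H_s=\alpha_0(X_s)$. The first component of $\gr(\phi_s)(p)=(p,\phi_s(p),g_s(p))$ does not move and $\underline{\tau}$ is strict, so for $e=\Psi_{F_s}(p)$
\[
\partial_sF_s(e)=(dw-\lambda_{\can})\big(\partial_s\Gamma_{\phi_s}(p)\big)=\alpha_0\big(\partial_s\phi_s(p)\big)=H_s\big(\phi_s(p)\big).
\]
Likewise, in the symplectic case with $\iota_{X}\omega_0=dH$, one gets $\partial_sF_s(\Phi_{F_s}(p))=H_s(\varphi_s(p))$; specialness kills the additive constant.

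In Step 3 the following all hold:
\begin{enumerate}
\item The identity $\partial_s(F_s\circ\Theta_s)=\big(\partial_sF_s-\partial_vF_s(Y_s)\big)\circ\Theta_s$ is correct.
\item The Hadamard division only uses that $0$ is a regular value of $\partial_vF_s$, which is exactly the generating-function condition.
\item Compact support of $Y_s$ costs nothing. Since $k_s$ vanishes outside a fixed compact set (there $\partial_sF_s=0$ and $h_s=0$), you can multiply $Y_s$ by a cutoff equal to $1$ on $\mathrm{supp}\,k_s$ without changing $\partial_vF_s(Y_s)$.
\item Then $\Theta_s$ is a compactly supported fibre-preserving isotopy, and each $G_s$ is special with the same $F_\infty$.
\end{enumerate}

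The load-bearing claim is Step 1, which you assert rather than prove. It is exactly what makes $k_s$ compactly supported, the flow of $Y_s$ complete, and $G_0,G_1$ special. A family that is only quadratic at infinity, with $\partial_v(F_s-F_\infty)$ bounded, would not suffice, because $\partial_sF_s$ would then have no reason to be compactly supported.

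Step 1 is a parametric version of \cite[Lemma 2.10]{FSZ}, not a formal consequence of it. You must check that your construction depends smoothly on $s$ on a fixed domain. That includes whatever step makes the function exactly equal to its quadratic part outside a compact set.

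Build the family globally in $s$, for instance from a decomposition of $\varphi^H_s$ into a fixed number of $C^1$-small pieces depending smoothly on $s$. Do not appeal to compactness of $[0,1]$ and patch local families: gluing families on overlapping $s$-intervals would need a parametric uniqueness theorem.

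Granting Step 1, your argument proves more than the statement: it gives a monotone path $s\mapsto G_s$ of special generating functions from $F$ to $G$, using only $H_s\ge 0$ infinitesimally. The paper instead obtains the statement by citation.
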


Recall that the lift to $(\mathbb{R}^{2n} \times S^1, \xi_0)$
of a compactly supported Hamiltonian diffeomorphism
$\varphi$ of $(\mathbb{R}^{2n}, \omega_0)$
is the (compactly supported) contactomorphism
$\widetilde{\varphi}$ defined by
\[
\widetilde{\varphi} \, (z, \theta)
= \big( \varphi(z) \,,\, \theta + S (z) \big) \,,
\]
where $S$ is the compactly supported function
that satisfies $\varphi^{\ast} \lambda_0 - \lambda_0 = dS$.
The following result
is proved in \cite[Lemma 3.2]{San11}
or \cite[Lemma 2.9]{FSZ}.

\begin{lemma}\label{lemma: gf lift to contact}
If $f: \mathbb{R}^{2n} \times \mathbb{R}^N \rightarrow \mathbb{R}$
is a special generating function quadratic at infinity
for a compactly supported Hamiltonian diffeomorphism $\varphi$
of $(\mathbb{R}^{2n}, \omega_0)$
then
\[
F: \mathbb{R}^{2n+1} \times \mathbb{R}^N
\rightarrow \mathbb{R} \,,\;
F (z, \theta, \zeta)
= f (z, \zeta)
\]
is a special generating function quadratic at infinity
for the lift of $\varphi$ to $(\mathbb{R}^{2n} \times S^1, \xi_0)$.
\end{lemma}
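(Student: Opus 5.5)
The plan is to verify the defining properties of a special generating function quadratic at infinity for the lift directly, reducing each one to the corresponding property of $f$. First, $F$ is a generating function. Its vertical derivative $\partial_\zeta F(z,\theta,\zeta)=\partial_\zeta f(z,\zeta)$ does not depend on $\theta$. So the transversality of $dF$ to $N_E^{\ast}$ follows from that of $df$, since the extra base direction $\theta$ only adds a factor. Moreover
\[
\Sigma_F=\{(z,\theta,\zeta)\,:\,(z,\zeta)\in\Sigma_f\}\cong\Sigma_f\times\mathbb{R},
\]
and the Legendrian immersion is
\[
j_F(z,\theta,\zeta)=\big(z,\theta,\partial_z f(z,\zeta),\,0,\,f(z,\zeta)\big).
\]
In particular its $d\theta$-component vanishes identically.

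Second, I would identify $\im(j_F)$ with $\Lambda_{\widetilde{\varphi}_{\mathbb{R}}}$, where $\widetilde{\varphi}_{\mathbb{R}}(z,\theta)=(\varphi(z),\theta+S(z))$. Using the defining relation between $S$ and $\lambda_0$, one checks that $\widetilde{\varphi}_{\mathbb{R}}$ is strict for $\alpha_0$, so $g\equiv0$. Then the formula for $\underline{\tau}$ gives
\[
\Gamma_{\widetilde\varphi_{\mathbb{R}}}(z',\theta')=\Big(\tfrac{z'+\varphi(z')}{2},\,\theta',\,i\big(\varphi(z')-z'\big),\,0,\,S(z')+\tfrac12\langle i\varphi(z'),z'\rangle\Big).
\]
The first and third components are exactly $\tau(z',\varphi(z'))=\Gamma_\varphi(z')$. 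Since $f$ generates $\varphi$, the point $\Gamma_\varphi(z')$ equals $i_f(z,\zeta)$ for a unique $(z,\zeta)\in\Sigma_f$. It therefore remains to check that
\[
f(z,\zeta)=S(z')+\tfrac12\langle i\varphi(z'),z'\rangle
\]
under this correspondence. For this I would compare differentials. On one side, $d(f|_{\Sigma_f})=i_f^{\ast}\lambda_{\can}$. On the other side, the pullback of $\lambda_{\can}$ by $\Gamma_\varphi$ can be computed from $\tau$ and the relation $\varphi^{\ast}\lambda_0-\lambda_0=dS$; it should equal the differential of $S(z')+\frac12\langle i\varphi(z'),z'\rangle$. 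So the two functions differ by a constant on the connected manifold $\mathbb{R}^{2n}$. Both vanish outside the support: $f$ because it is special and hence normalized, and the right-hand side because $S=0$ and $\varphi(z')=z'$ there. Hence the constant is zero.

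Third, the ``special quadratic at infinity'' property is inherited. Set $F_\infty(z,\theta,\zeta)=f_\infty(\zeta)$, which is independent of the base variables. Then $F=F_\infty$ outside $K\times\mathbb{R}$, where $K$ is compact. Since $F$ is invariant under $\theta\mapsto\theta+1$, it descends to $S^{2n}\times S^1\times\mathbb{R}^N$. There the exceptional set becomes compact, and $\overline F$ generates $\overline{\Lambda_{\widetilde\varphi}}$, including the point at infinity, because both are trivial there.

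I expect the main obstacle to be the middle step, namely matching the Legendrian lift of $i_f$ with the $w$-coordinate of $\underline{\tau}\circ\gr(\widetilde\varphi_{\mathbb{R}})$. This requires a careful computation of $\Gamma_\varphi^{\ast}\lambda_{\can}$ and consistent sign conventions for $S$ (so that the lift is strict) and for $\langle i\,\cdot,\cdot\rangle$. I would carry this out first in coordinates, and fix the sign of $S$ by demanding $\widetilde\varphi_{\mathbb{R}}^{\ast}\alpha_0=\alpha_0$.
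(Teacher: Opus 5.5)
Your argument is correct in substance. The paper gives no proof of its own here; it only refers to the corresponding lemmas of Sandon and of Fraser--Sandon--Zhang. Your direct verification is the natural self-contained argument:
\begin{itemize}
\item the fibre critical set is $\Sigma_F=\Sigma_f\times\mathbb{R}$;
\item the $T^{\ast}$-parts of $j_F$ and $\Gamma_{\widetilde\varphi_{\mathbb{R}}}$ coincide through $\Gamma_\varphi$, with zero $\theta$-momentum;
\item the two $w$-functions are primitives of the same form $\Gamma_\varphi^{\ast}\lambda_{\can}$ on the connected space $\mathbb{R}^{2n}$, and both vanish outside the support, so they are equal;
\item specialness passes to the $\theta$-independent extension.
\end{itemize}

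The one place to be careful is the sign issue you flag, and your proposed fix is not the right one if you keep the printed formulas. You propose to choose the sign of $S$ so that the lift is strict for $\alpha_0$. But the two facts you need are not independent sign choices: both come from $\underline{\tau}$ being a strict contactomorphism. With the formulas as printed, and writing $\lambda_0=\frac12\langle iz,dz\rangle$, one computes
\[
\underline{\tau}^{\ast}(dw-\lambda_{\can})=\pr_2^{\ast}(d\theta-\lambda_0)-e^{\rho}\,\pr_1^{\ast}(d\theta-\lambda_0),
\qquad
\Gamma_\varphi^{\ast}\lambda_{\can}=\varphi^{\ast}\lambda_0-\lambda_0+\tfrac12\, d\langle i\varphi(z'),z'\rangle .
\]
Consequences:
\begin{itemize}
\item With the paper's $S$ (i.e.\ $\varphi^{\ast}\lambda_0-\lambda_0=dS$), your identity $\Gamma_\varphi^{\ast}\lambda_{\can}=d\big(S+\tfrac12\langle i\varphi(z'),z'\rangle\big)$ holds.
\item The lift $(\varphi(z),\theta+S(z))$ is strict for $d\theta-\lambda_0$, not for the printed $\alpha_0=d\theta+\lambda_0$. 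For the printed form its pullback is $\alpha_0+2\,dS$, so it is not even a contactomorphism.
\item If you instead flip $S$ to make the lift strict for $d\theta+\lambda_0$ while keeping $\underline{\tau}$, your two $w$-functions differ by $2S$, not by a constant, and the argument breaks.
\end{itemize}

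The robust way to organize the middle step is to take $\alpha_0$ to be the form for which $\underline{\tau}$ really is strict. Then $\gr(\widetilde\varphi_{\mathbb{R}})$, and hence $\Lambda_{\widetilde\varphi_{\mathbb{R}}}$, is Legendrian. The relation $dw=\lambda_{\can}$ along $\Lambda_{\widetilde\varphi_{\mathbb{R}}}$ then gives $dw=\Gamma_\varphi^{\ast}\lambda_{\can}$ without any separate computation. The rest of your proof goes through verbatim.
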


We now discuss how
for any odd positive integer $k$
and any sequence $(\varphi_0, \cdots, \varphi_{k-1})$
of compactly supported Hamiltonian diffeomorphisms
of $(\mathbb{R}^{2n}, \omega_0)$
we can obtain a generating function quadratic at infinity
for the composition $\varphi_{k-1} \circ \cdots \circ \varphi_0$
starting from special generating functions quadratic at infinity
$F_j: \mathbb{R}^{2n} \times \mathbb{R}^{N_j} \rightarrow \mathbb{R}$
for the $\varphi_j$'s.
Pose $N = N_0 + \cdots + N_{k-1}$,
and observe first that the function
\[
F_0 \,\sharp\,\cdots \,\sharp\, F_{k-1}:
\mathbb{R}^{2n}
\times (\mathbb{R}^{2n(k-1)} \times \mathbb{R}^N)
\rightarrow \mathbb{R}
\]
defined by
\[
F_0 \,\sharp\,\cdots \,\sharp\, F_{k-1} \;
(z_0; z_1, \cdots, z_{k-1}, \zeta_0, \cdots, \zeta_{k-1})
= \sum_{j = 0}^{k-1}
\; F_j \Big( \frac{z_j + z_{j+1}}{2} \,,\, \zeta_j \Big)
+ \frac{1}{2} \, \langle\, i \, z_j \,,\, z_{j+1} \,\rangle
\]
is a normalized generating function
for the composition $\varphi_{k-1} \circ \cdots \circ \varphi_0$.
This is proved in \cite[Proposition 5.1]{Allais}
for generating functions without fibre variables,
and in \cite[Proposition 3.1]{FSZ}
for a composition of the form
$F^{\sharp k} = F \,\sharp \,\cdots \,\sharp\, F$.
The proof of \cite[Proposition 3.1]{FSZ}
of course can easily be adapted to the case
of a general composition $F_0 \,\sharp\,\cdots \,\sharp\, F_{k-1}$.

Consider now the fibre preserving change of variables $A$
defined by
\[
A \, (z_0; z_1, \cdots, z_{k-1}, \zeta_0, \cdots, \zeta_{k-1})
= \Big(\, z_0 \,;\,
z_1 - z_0 \,,\, \cdots \,,\, z_{k-1} - z_0 \,,\,
\zeta_0, \cdots, \zeta_{k-1} \,\Big) \,.
\]
Since $A$ is fibre preserving,
the function
$$
(F_0 \,\sharp\,\cdots \,\sharp\, F_{k-1}) \circ A^{-1}:
\mathbb{R}^{2n} \times (\mathbb{R}^{2n(k-1)}
\times \mathbb{R}^N)
\rightarrow \mathbb{R}
$$
is also a normalized generating function
for $\varphi_{k-1} \circ \cdots \circ \varphi_0$.

\begin{rmk}\label{remark: compactification}
We use here a change of variables
different from the one used in \cite{FSZ}.
Indeed,
in \cite{FSZ} the authors consider a single function $F$
and a change of variables
that behaves in a certain way with respect to the $\mathbb{Z}_k$-action
on the domain of $F^{\sharp k}$
that cyclically permutes the coordinates
(the conjugation of this $\mathbb{Z}_k$-action
by the change of variables is required to be fibre preserving).
This property is not needed in the present work
(since we work with usual homology,
and not with $\mathbb{Z}_k$-equivariant homology).
On the other hand,
the advantage of our change of variables
is that it is itself fibre preserving.
\end{rmk}

We have
\[
(F_0 \,\sharp\,\cdots \,\sharp\, F_{k-1}) \circ A^{-1} \;
(z_0; z_1, \cdots, z_{k-1}, \zeta_0, \cdots, \zeta_{k-1})
\]
\[
= F_0 \big( z_0 + \frac{z_1}{2} \,,\, \zeta_0 \big)
+ \sum_{j = 1}^{k - 2}
F_j \big( z_0 + \frac{z_j + z_{j+1}}{2} \,,\, \zeta_j \big)
+ F_{k-1} \big( z_0 + \frac{z_{k-1}}{2} \,,\, \zeta_{k-1} \big)
\]
\[
+ \sum_{j = 1}^{k - 2} \frac{1}{2} \, \langle\, i \, z_j , z_{j+1} \,\rangle \,,
\]
thus $(F_0 \,\sharp\,\cdots \,\sharp\, F_{k-1}) \circ A^{-1}$
can be extended to a function
\[
\overline{F_0 \,\sharp\,\cdots \,\sharp\, F_{k-1}}:
S^{2n} \times (\mathbb{R}^{2n(k-1)} \times \mathbb{R}^N)
\rightarrow \mathbb{R}
\]
by setting
\[
\overline{F_0 \,\sharp\,\cdots \,\sharp\, F_{k-1}} \;
(p_{\infty}; z_1, \cdots, z_{k-1}, \zeta_0, \cdots, \zeta_{k-1})
= \sum_{j = 0}^{k - 1} \overline{F_j} \, (p_{\infty}, \zeta_j)
+ \sum_{j = 1}^{k - 2} \frac{1}{2} \, \langle\, i \, z_j , z_{j+1} \,\rangle \,.
\]

\begin{prop}\label{proposition: gfqi k}
For any odd positive integer $k$,
sequence $(\varphi_0, \cdots, \varphi_{k-1})$
of compactly supported Hamiltonian diffeomorphisms
of $(\mathbb{R}^{2n}, \omega_0)$
and special generating functions quadratic at infinity
$F_j$ for the $\varphi_j$'s,
the function $\overline{F_0 \,\sharp\,\cdots \,\sharp\, F_{k-1}}$
is a normalized generating function quadratic at infinity
for $\varphi_{k-1} \circ \cdots \circ \varphi_0$.
\end{prop}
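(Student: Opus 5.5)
We need three things: the extended function on $S^{2n}\times(\mathbb{R}^{2n(k-1)}\times\mathbb{R}^N)$ is smooth and a generating function for $\overline{L_{\varphi}}$ with $\varphi=\varphi_{k-1}\circ\cdots\circ\varphi_0$; it is quadratic at infinity; and it is normalized. By the discussion above we already know that $(F_0 \,\sharp\,\cdots\,\sharp\, F_{k-1})\circ A^{-1}$ is a normalized generating function for $\varphi$ on $\mathbb{R}^{2n}$. So the work is concentrated near $p_\infty$ and at infinity in the fibre.

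\emph{Step 1: structure near $p_\infty$.} Each $F_j$ is special, so $F_j(w,\zeta_j)=Q_j(\zeta_j)$ for $\lvert w\rvert + \lvert\zeta_j\rvert$ large, with $Q_j$ a non-degenerate quadratic form in $\zeta_j$ alone. Moreover $\overline{F_j}(p_\infty,\zeta_j)=Q_j(\zeta_j)$. Fix $R$ with $F_j=Q_j$ whenever $\lvert w\rvert\ge R$. Given a compact set of fibre variables $(z_1,\dots,z_{k-1})$, for $\lvert z_0\rvert$ large every argument $z_0+\tfrac{z_j+z_{j+1}}{2}$ (and $z_0+\tfrac{z_1}{2}$, $z_0+\tfrac{z_{k-1}}{2}$) has norm $\ge R$. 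Hence on a neighbourhood of $\{p_\infty\}\times K$ the function equals
\[
\sum_j Q_j(\zeta_j)+\sum_{j=1}^{k-2}\tfrac12\langle i z_j,z_{j+1}\rangle ,
\]
which is independent of the base point. This gives smoothness of the extension and shows that near $p_\infty$ it generates the zero section, exactly matching $\overline{L_\varphi}$ near $p_\infty$, since $\varphi$ is compactly supported. The non-compactness in $z_j$ is handled by local uniformity: the fibre-critical set near $p_\infty$ is $\{\zeta=0,\ \text{crit of the quadratic part}\}$, a compact set. The transversality condition there holds because a base-independent non-degenerate form has $\Sigma=B\times\{\text{critical point}\}$ cut out transversally.

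\emph{Step 2: quadratic at infinity.} Let $Q_\infty(z_1,\dots,z_{k-1},\zeta)=\sum_j Q_j(\zeta_j)+\sum_{j=1}^{k-2}\frac12\langle i z_j,z_{j+1}\rangle$. The difference between the function and $Q_\infty$ is $\sum_j (F_j-Q_j)$ evaluated at the shifted arguments. Since $F_j-Q_j$ is compactly supported, its derivatives are bounded, so the vertical derivative of the difference is bounded, uniformly over $S^{2n}$ (it vanishes near $p_\infty$). The key check is that $Q_\infty$ is non-degenerate, and this is where oddness of $k$ enters. The $\zeta$-part is non-degenerate. The $z$-part is the form $\sum_{j=1}^{k-2}\frac12\langle i z_j,z_{j+1}\rangle$ on $(\mathbb{R}^{2n})^{k-1}$, whose matrix is block tridiagonal with zero diagonal blocks and off-diagonal blocks $\pm\frac12 J$. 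Its kernel is found by solving $J z_2=0$, $J(z_{j+1}-z_{j-1})=0$ for the middle indices, and $J z_{k-2}=0$. This forces $z_2=z_4=\cdots=0$ and $z_1=z_3=\cdots=z_{k-2}$ with $z_{k-2}=0$ from the last equation. Since $k-1$ is even, this chain closes and the kernel is trivial. I would verify this indexing carefully; it is the main point where a sign or parity slip could occur. If $k$ were even, a nonzero kernel survives, which explains the hypothesis.

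\emph{Step 3: normalization.} The normalization asks that the critical value at the critical point over $p_\infty$ be zero. By Step 1 that critical point is $(p_\infty;0,\ldots,0)$, where the function takes value $\sum_j Q_j(0)=0$. Equivalently, at fixed points outside the support the value vanishes, as already known from the normalized formula on $\mathbb{R}^{2n}$.

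I expect the main obstacle to be Step 1's uniformity. Because the fibre $\mathbb{R}^{2n(k-1)}$ is non-compact, one must argue that transversality and the generating property near $p_\infty$ only need to be checked near the fibre critical set. That set is bounded, so the large-$\lvert z_0\rvert$ reduction applies.
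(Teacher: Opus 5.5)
Your proposal is correct and follows the paper's route. The paper uses the same base-independent form $Q=\sum_j (F_j)_\infty(\zeta_j)+\sum_{j=1}^{k-2}\frac12\langle i z_j,z_{j+1}\rangle$, gets boundedness of $\partial_v\big(\overline{F_0\,\sharp\cdots\sharp\,F_{k-1}}-Q\big)$ from $F_j=(F_j)_\infty$ outside a compact set, and gets normalization from the critical point over $p_\infty$. You also supply two things the paper only asserts: the odd-$k$ kernel computation showing $Q$ is non-degenerate, and the justification of smoothness and of the generating property near $p_\infty$.
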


\begin{proof}
Since $(F_0 \,\sharp\,\cdots \,\sharp\, F_{k-1}) \circ A^{-1}$
is a generating function for $\varphi := \varphi_{k-1} \circ \cdots \circ \varphi_0$,
hence for the Lagrangian submanifold
$L_{\varphi}$ of $T^{\ast}\mathbb{R}^{2n}$,
the compactification $\overline{F_0 \,\sharp\,\cdots \,\sharp\, F_{k-1}}$
is a generating function for the Lagrangian submanifold
$\overline{L_{\varphi}}$ of $T^{\ast}S^{2n}$.
Consider the non-degenerate quadratic form $Q$
on $S^{2n} \times (\mathbb{R}^{2n(k-1)} \times \mathbb{R}^N)$
defined by
\[
Q (p; z_1, \cdots, z_{k-1}, \zeta_0, \cdots, \zeta_{k-1})
= \sum_{j = 0}^{k-1} (F_j)_{\infty} (\zeta_j)
+ \sum_{j = 1}^{k - 2} \frac{1}{2} \, \langle\, i \, z_j , z_{j+1} \,\rangle \,.
\]
Since $\lVert \frac{\partial F_j}{\partial z_j} \rVert$
and $\lVert \frac{\partial}{\partial \zeta_j} \big(F_j - (F_j)_{\infty}\big) \rVert$
are bounded,
$\partial_v ( \overline{F_0 \,\sharp\,\cdots \,\sharp\, F_{k-1}} - Q )$ is bounded,
and thus $\overline{F_0 \,\sharp\,\cdots \,\sharp\, F_{k-1}}$
is a generating function quadratic at infinity
for $\overline{L_{\varphi}}$,
hence for $\varphi$.
Since the generating function
$(F_0 \,\sharp\,\cdots \,\sharp\, F_{k-1}) \circ A^{-1}$ is normalized,
$\overline{F_0 \,\sharp\,\cdots \,\sharp\, F_{k-1}}$ is also normalized.
\end{proof}

Proposition \ref{proposition: gfqi k} will be used
in Section \ref{section: spectral selectors}
to compare the spectral selectors
associated to a sequence of contactomorphisms
of $(\mathbb{R}^{2n} \times S^1, \xi_0)$
of the form
$(\widetilde{\varphi_0}, \cdots, \widetilde{\varphi_{k-1}})$
to the spectral selectors
of the composition $\varphi_{k-1} \circ \cdots \circ \varphi_0$.


\section{Functions quadratic at infinity
detecting flat translated chains of sequences of contactomorphisms
of $(\mathbb{R}^{2n} \times S^1, \xi_0)$}\label{section: functions translated chains}

We have seen in the previous section
how to obtain generating functions quadratic at infinity
for odd compositions of compactly supported
Hamiltonian diffeomorphisms of $(\mathbb{R}^{2n}, \omega_0)$.
We now describe
a similar construction
to obtain functions quadratic at infinity
detecting flat translated chains
of a sequence $\bm{\phi} = (\phi_0, \cdots, \phi_{k-1})$
of compactly supported contactomorphisms
of $(\mathbb{R}^{2n} \times S^1, \xi_0)$
contact isotopic to the identity.

Let $F_j: \mathbb{R}^{2n+1} \times \mathbb{R}^{N_j} \rightarrow \mathbb{R}$
be special generating functions quadratic at infinity
for the $\phi_j$'s.
Pose $N = N_0 + \cdots + N_{k-1}$,
and consider the function
\[
F_0 \,\sharp\, \cdots \,\sharp\, F_{k-1}:
\mathbb{R}^{(2n+1)k} \times \mathbb{R}^N 
\rightarrow \mathbb{R}
\]
defined by
\begin{gather*}
F_0 \,\sharp\, \cdots \,\sharp\, F_{k-1}
\; (z_0, \theta_0, \cdots, z_{k-1}, \theta_{k-1},
\zeta_0, \cdots, \zeta_{k-1}) \\
= \sum_{j = 0}^{k-1}
F_j \Big( \frac{z_j + z_{j+1}}{2} \,,\, \theta_j \,,\, \zeta_j \Big)
+ \frac{1}{2} \, \langle\, i \, z_j , z_{j+1} \,\rangle \,.
\end{gather*}

\begin{prop}\label{proposition: composition formula contact flat}
The map
\[
(z_0, \theta_0, \cdots, z_{k-1}, \theta_{k-1}, \zeta_0, \cdots, \zeta_{k-1})
\mapsto \big( (z_0, \theta_0), \cdots, (z_{k-1}, \theta_{k-1}) \big)
\]
is a bijection
between the set of critical points
of $F_0 \,\sharp\, \cdots \,\sharp\, F_{k-1}$
and the set of flat translated chains of $\bm{\phi}_{\mathbb{R}}$.
Moreover,
the critical values of $F_0 \,\sharp\, \cdots \,\sharp\, F_{k-1}$
are given by the actions of the corresponding translated chains.
\end{prop}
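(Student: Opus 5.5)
The plan is to write out the critical point equations of $F_0 \,\sharp\, \cdots \,\sharp\, F_{k-1}$ explicitly and to translate them, via Lemma \ref{lemma: useful}, into conditions on points of $\mathbb{R}^{2n+1}$. Indices are taken in $\mathbb{Z}_k$ (so $z_k = z_0$), and I assume $k$ is odd, as in the construction of Section \ref{section: gfqi}. This assumption is actually needed, as the last step will show. Set $w_j = \frac{z_j + z_{j+1}}{2}$.

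\textbf{The $\zeta_j$- and $\theta_j$-equations.} Vanishing of $\partial/\partial\zeta_j$ says exactly that $(w_j, \theta_j, \zeta_j)$ is a fibre critical point of $F_j$. Lemma \ref{lemma: useful} then gives a unique point $q_j = (z'_j, \theta'_j)$ with $\Psi_{F_j}(q_j) = (w_j, \theta_j, \zeta_j)$. It satisfies $\theta'_j = \theta_j$ and $2 w_j = e^{g_j(q_j)/2} z'_j + (\phi_j)_z(q_j)$, together with the stated formulas for $\partial_z F_j$ and $\partial_\theta F_j$. Since $\theta_j$ appears only in the $j$-th summand, the $\theta_j$-equation is $e^{g_j(q_j)} - 1 = 0$, that is, $g_j(q_j) = 0$. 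This is exactly where flatness comes from. Consequently $z_j + z_{j+1} = z'_j + (\phi_j)_z(q_j)$ and $\partial_z F_j = i\big((\phi_j)_z(q_j) - z'_j\big)$.

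\textbf{The $z_j$-equation.} The variable $z_j$ enters the summands $j-1$ and $j$ of $F$, each with factor $\frac12$, and the two symplectic terms. Using $\nabla_{z}\langle iz, w\rangle = -iw$ and $\nabla_w \langle iz, w\rangle = iz$, the $z_j$-equation becomes
\[
\big((\phi_j)_z(q_j) - z'_j\big) + \big((\phi_{j-1})_z(q_{j-1}) - z'_{j-1}\big) = z_{j+1} - z_{j-1}.
\]
Put $c_j = z_j - z'_j$ and $b_j = z_{j+1} - (\phi_j)_z(q_j)$. The relation from the $\theta_j$-step gives $b_j = -c_j$, and substituting into the $z_j$-equation yields $c_j = -c_{j-1}$ for all $j \in \mathbb{Z}_k$. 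Going once around the cycle gives $c_j = (-1)^k c_j$, so $c_j = 0$ because $k$ is odd. This linear-algebra step is the only delicate point: for even $k$ it fails, which is why the construction requires odd $k$. Hence $z'_j = z_j$ and $(\phi_j)_z(q_j) = z_{j+1}$, so $q_j = p_j := (z_j, \theta_j)$.

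\textbf{Critical points give flat translated chains.} The Reeb flow of $\alpha_0$ is translation in $\theta$. Therefore $p_{j+1} = \varphi^{\alpha_0}_{-t_j} \circ (\phi_j)_{\mathbb{R}}(p_j)$ with $t_j = (\phi_j)_\theta(p_j) - \theta_{j+1}$, and $g_j(p_j) = 0$. So $\bm{p}$ is a flat translated chain of $\bm{\phi}_{\mathbb{R}}$.

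\textbf{Converse and bijectivity.} Given a flat translated chain, define $\zeta_j$ by $\Psi_{F_j}(p_j) = (w_j, \theta_j, \zeta_j)$. The computations above, read backwards, show that this gives a critical point. Injectivity holds because each $\zeta_j$ is determined by $p_j$, since $\Psi_{F_j}$ is a diffeomorphism.

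\textbf{Critical values.} By Lemma \ref{lemma: useful} with $g_j = 0$, $z'_j = z_j$ and $(\phi_j)_z(p_j) = z_{j+1}$,
\[
F_j(w_j, \theta_j, \zeta_j) = (\phi_j)_\theta(p_j) - \theta_j + \tfrac12 \langle i z_{j+1}, z_j\rangle .
\]
This last term cancels the term $\frac12\langle i z_j, z_{j+1}\rangle$ in $F_0 \,\sharp\, \cdots \,\sharp\, F_{k-1}$. Summing over $j$,
\[
F_0 \,\sharp\, \cdots \,\sharp\, F_{k-1} = \sum_j \big(t_j + \theta_{j+1} - \theta_j\big) = \sum_j t_j = \mathcal{A}_{\bm{\phi}_{\mathbb{R}}}(\bm{p}),
\]
since the $\theta$-differences telescope around the cycle.
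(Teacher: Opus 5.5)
Your proof is correct and follows essentially the same route as the paper. Both translate the $\zeta_j$-, $\theta_j$- and $z_j$-equations via Lemma \ref{lemma: useful}, both use oddness of $k$ to identify the preimage points $(Z'_j,\Theta'_j)$ with $(z_j,\theta_j)$, and in both the symplectic terms cancel when the critical value is computed. The only difference is the order: the paper first extracts the chain relation $Z'_j = ((\phi_{j-1})_{\mathbb{R}})_z(Z'_{j-1},\Theta'_{j-1})$ from the $z_j$-equation and invokes oddness afterwards, whereas your recursion $c_j=-c_{j-1}$ uses oddness first and then obtains the chain relation directly.
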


\begin{proof}
Let $p = (z_0, \theta_0, \cdots, z_{k-1}, \theta_{k-1}, \zeta_0, \cdots, \zeta_{k-1})$
be a critical point of $F := F_0 \,\sharp\, \cdots \,\sharp\, F_{k-1}$.
For every $j$ we have
\[
0 = \frac{\partial F}{\partial \zeta_j} \; (p)
= \frac{\partial F_j}{\partial \zeta_j}
\; \Big( \frac{z_j + z_{j+1}}{2} \,,\, \theta_j \,,\, \zeta_j \Big) \,,
\]
thus $\big( \frac{z_j + z_{j+1}}{2} \,,\, \theta_j \,,\, \zeta_j \big)$
is a fibre critical point of $F_j$.
Pose
\begin{equation}\label{equation: definition XY}
\begin{cases}
Z_j = \frac{z_j + z_{j+1}}{2} \\
\Theta_j = \theta_j \,.
\end{cases}    
\end{equation}
Since $k$ is odd,
the $z_j$'s are determined by the $Z_j$'s
via the formula
\[
z_j = \sum_{l=0}^{k-1} (-1)^l \, Z_{j+l} \,.
\]
We also have
\begin{equation}\label{equation: k odd}
\frac{1}{2} \, (z_{j-1} - z_{j+1}) = Z_{j-1} - Z_j \,.   
\end{equation}
Let $(Z_j', \Theta'_j) \in \mathbb{R}^{2n+1}$
be the image of $(Z_j, \Theta_j, \zeta_j)$
by the inverse of the diffeomorphism
\eqref{equation: diffeomorphism gf contact}.
By Lemma \ref{lemma: useful},
we have
\begin{equation}\label{equation: conditions XY}
\begin{cases}
Z_j = \frac{e^{\frac{1}{2} (g_j)_{\mathbb{R}} (Z'_j, \Theta'_j)} \, Z'_j
\,+\, ((\phi_j)_{\mathbb{R}})_z \, (Z'_j, \Theta'_j)}{2} \\
\Theta_j = \Theta'_j \,,
\end{cases}
\end{equation}
\begin{equation*}
\begin{cases}
\frac{\partial F_j}{\partial Z_j} \, (Z_j, \Theta_j, \zeta_j)
= i \; \big(\, ((\phi_j)_{\mathbb{R}})_z \, (Z'_j, \Theta'_j) 
- e^{\frac{1}{2} (g_j)_{\mathbb{R}} (Z'_j, \Theta'_j)} \, Z'_j \,\big) \\
\frac{\partial F_j}{\partial \Theta_j} \, (Z_j, \Theta_j, \zeta_j)
= e^{(g_j)_{\mathbb{R}} (Z'_j, \Theta'_j)} - 1 
\end{cases}    
\end{equation*}
and
\begin{equation}\label{equation: value F}
F_j (Z_j, \Theta_j, \zeta_j)
= ((\phi_j)_{\mathbb{R}})_{\theta} \, (Z'_j, \Theta'_j)
- \Theta'_j
+ \frac{1}{2} \; e^{\frac{1}{2} (g_j)_{\mathbb{R}} (Z'_j, \Theta'_j)}
\; \big\langle\, i \, ((\phi_j)_{\mathbb{R}})_z \, (Z'_j, \Theta'_j)
\,,\, Z'_j \,\big\rangle \,,
\end{equation}
where $((\phi_j)_{\mathbb{R}})_z$ and $((\phi_j)_{\mathbb{R}})_{\theta}$
are the components of $(\phi_j)_{\mathbb{R}}$,
and $(g_j)_{\mathbb{R}}$ the conformal factor.
For every $j$ we have
\[
0 = \frac{\partial F}{\partial \theta_j} \, (p)
= \frac{\partial F_j}{\partial \Theta_j} \, (Z_j, \Theta_j, \zeta_j)
= e^{(g_j)_{\mathbb{R}} (Z'_j, \Theta'_j)} - 1 \,,
\]
thus
\begin{equation}\label{equation: condition derivatives Theta}
(g_j)_{\mathbb{R}} (Z'_j, \Theta'_j) = 0 \,.
\end{equation}
Moreover,
using \eqref{equation: k odd}
and \eqref{equation: condition derivatives Theta}
we have
\[
0 = \; \frac{\partial F}{\partial z_j} \, (p)
= \; \frac{1}{2} \, 
\frac{\partial F_j}{\partial Z_j} \, (Z_j, \Theta_j, \zeta_j)
+ \frac{1}{2} \, \frac{\partial F_{j-1}}{\partial Z_{j-1}}
\, (Z_{j-1}, \Theta_{j-1}, \zeta_{j-1})
+ \frac{1}{2} \, i \, (z_{j-1} - z_{j+1})
\]
\[
=  \; \frac{1}{2} \,
\frac{\partial F_j}{\partial Z_j} \, (Z_j, \Theta_j, \zeta_j)
+ \frac{1}{2} \, \frac{\partial F_{j-1}}{\partial Z_{j-1}} \,(Z_{j-1}, \Theta_{j-1}, \zeta_{j-1})
+ i \, (Z_{j-1} - Z_j)
\]
\[
= \; i \; \Big( ((\phi_{j-1})_{\mathbb{R}})_z \, (Z'_{j-1}, \Theta'_{j-1})
- Z'_j \Big) \,,
\]
thus
\begin{equation}\label{equation: condition derivatives x}
Z'_j
= ((\phi_{j-1})_{\mathbb{R}})_z (Z'_{j-1}, \Theta'_{j-1}) \,.
\end{equation}
We thus conclude that
$\big( (Z'_0, \Theta'_0) \,,\, \cdots \,,\,
(Z'_{k-1}, \Theta'_{k-1}) \big)$
is a flat translated chain of $\bm{\phi}_{\mathbb{R}}$.

Conversely,
let $(p_0, \cdots, p_{k-1})$ be a flat translated chain
of $\bm{\phi}_{\mathbb{R}}$.
For every $j$,
denote by $(Z_j, \Theta_j, \zeta_j) \in \Sigma_F$
the image of $p_j$
by the diffeomorphism \eqref{equation: diffeomorphism gf contact},
and define $(z_j, \theta_j)$
by the relation \eqref{equation: definition XY}
(using that $k$ is odd).
Calculations as above show that
$(z_0, \theta_0, \cdots, z_{k-1}, \theta_{k-1},
\zeta_0, \cdots, \zeta_{k-1})$
is then a critical point of $F$.
We thus deduce that the map
\begin{equation}\label{equation: bijection}
(z_0, \theta_0, \cdots, z_{k-1}, \theta_{k-1}, \zeta_0, \cdots, \zeta_{k-1})
\mapsto 
\big( (Z'_0, \Theta'_0) \,,\, \cdots \,,\,
(Z'_{k-1}, \Theta'_{k-1}) \big)
\end{equation}
is a bijection between the set of critical points of $F$
and the set of flat translated chains of $\bm{\phi}_{\mathbb{R}}$.

Observe now that the relations
\eqref{equation: conditions XY},
\eqref{equation: condition derivatives Theta}
and \eqref{equation: condition derivatives x}
give
\begin{equation}\label{equation: XY bis}
\begin{cases}
Z_j = \frac{Z'_j + Z'_{j+1}}{2} \\
\Theta_j = \Theta'_j \,.
\end{cases}    
\end{equation}
Since $k$ is odd,
\eqref{equation: XY bis} and \eqref{equation: definition XY}
imply that $(Z'_j, \Theta'_j) = (z_j, \theta_j)$.
We thus conclude that the bijection \eqref{equation: bijection}
is simply given by
\[
(z_0, \theta_0, \cdots, z_{k-1}, \theta_{k-1}, \zeta_0, \cdots, \zeta_{k-1})
\mapsto \big( (z_0, \theta_0) \,,\, \cdots \,,\, (z_{k-1}, \theta_{k-1}) \big) \,.
\]
Finally,
using \eqref{equation: value F} we see that
for a critical point
$p = (z_0, \theta_0, \cdots, z_{k-1}, \theta_{k-1}, \zeta_0, \cdots, \zeta_{k-1})$ of $F$
we have
\[
F (p)
= \sum_{j = 0}^{k-1} F_j (Z_j, \Theta_j, \zeta_j)
+ \frac{1}{2} \, \langle\, i \, z_j , z_{j+1} \,\rangle 
\]
\[
= \sum_{j = 0}^{k-1}
\, ((\phi_j)_{\mathbb{R}})_{\theta} \, (Z'_j, \Theta'_j) - \Theta'_j
+ \frac{1}{2} \; \big\langle\,
i \, ((\phi_j)_{\mathbb{R}})_z \, (Z'_j, \Theta'_j)
\,,\, Z'_j \,\big\rangle
+ \frac{1}{2} \, \langle\, i \, z_j , z_{j+1}  \,\rangle
\]
\[
= \sum_{j = 0}^{k-1}
\,((\phi_j)_{\mathbb{R}})_{\theta}\, (Z'_j, \Theta'_j) - \Theta'_j
+ \frac{1}{2} \; \big\langle\, i \, Z'_{j+1} \,,\, Z'_j \,\big\rangle
+ \frac{1}{2} \, \langle\, i \, z_j , z_{j+1} \,\rangle
\]
\[
= \sum_{j = 0}^{k-1}
((\phi_j)_{\mathbb{R}})_{\theta} \, (Z'_j, \Theta'_j)
- \Theta'_j \,,
\]
which is the action of the translated chain
\[
\big( (Z'_0, \Theta'_0) \,,\, \cdots \,,\, (Z'_{k-1}, \Theta'_{k-1}) \big)
= \big( (z_0, \theta_0) \,,\, \cdots \,,\, (z_{k-1}, \theta_{k-1}) \big) \,.
\qedhere
\]
\end{proof}

We now compactify $F_0 \,\sharp\, \cdots \,\sharp\, F_{k-1}$
to a function quadratic at infinity.
We consider the change of variables $B$
defined by
\[
B \, (z_0, \theta_0;
z_1, \theta_1, \cdots, z_{k-1}, \theta_{k-1}, \zeta_0, \cdots, \zeta_{k-1})
\]
\[
= \Big(\, z_0, \theta_0 \,;\,
z_1 - z_0 \,,\, \theta_1 \,,\, \cdots \,,\,
z_{k-1} - z_0 \,,\, \theta_{k-1} \,,\,
\zeta_0, \cdots, \zeta_{k-1} \,\Big) \,.
\]

We have
\[
(F_0 \,\sharp\, \cdots \,\sharp\, F_{k-1}) \circ B^{-1} \;
(z_0, \theta_0, \cdots, z_{k-1}, \theta_{k-1}, \zeta_0, \cdots, \zeta_{k-1})
\]
\[
= F_0 \Big( z_0 + \frac{z_1}{2} \,,\, \theta_0 \,,\, \zeta_0 \Big)
+ \sum_{j = 1}^{k - 2}
F_j \Big( z_0 + \frac{z_j + z_{j+1}}{2} \,,\, \theta_j \,,\, \zeta_j \Big)
+ F_{k-1} \Big( z_0 + \frac{z_{k-1}}{2} \,,\, \theta_{k-1} \,,\, \zeta_{k-1} \Big)
\]
\[
+ \sum_{j = 1}^{k - 2} \frac{1}{2} \, \langle\, i \, z_j , z_{j+1} \,\rangle \,,
\]
thus $(F_0 \,\sharp\, \cdots \,\sharp\, F_{k-1}) \circ B^{-1}$
can be extended to a function
on $S^{2n} \times \mathbb{R} \times \mathbb{R}^{(2n+1)(k-1)}
\times \mathbb{R}^N$
by setting the value
at $(p_{\infty}, \theta_0, z_1, \theta_1, \cdots, z_{k-1}, \theta_{k-1},
\zeta_0, \cdots, \zeta_{k-1})$
to be
\[
\sum_{j = 0}^{k-1} \overline{F}_j \, (p_{\infty}, \theta_j, \zeta_j)
+ \sum_{j=1}^{k-2} \frac{1}{2} \, \langle\, i \, z_j , z_{j+1} \,\rangle \,.
\]
Since the $F_j$'s are invariant by the $\mathbb{Z}$-action
\eqref{equation: action Z for F},
this latter function descends to a function
\[
\overline{F_0 \,\sharp\, \cdots \,\sharp\, F_{k-1}}:
S^{2n} \times S^1 \times (\mathbb{R}^{2n} \times S^1)^{k-1}
\times \mathbb{R}^N
\]
\[
\equiv S^{2n} \times \mathbb{T}^k \times \mathbb{R}^{2n(k-1)}
\times \mathbb{R}^N
\rightarrow \mathbb{R} \,.
\]

\begin{prop}\label{proposition: quadratic at infinity flat chains}
For any odd positive integer $k$,
sequence $\bm{\phi} = (\phi_0, \cdots, \phi_{k-1})$
of compactly supported contactomorphisms
of $(\mathbb{R}^{2n} \times S^1, \xi_0)$
contact isotopic to the identity
and special generating functions quadratic at infinity $F_j$
for the $\phi_j$'s,
the function $\overline{F_0 \,\sharp\, \cdots \,\sharp\, F_{k-1}}$
is quadratic at infinity over $S^{2n} \times \mathbb{T}^k$.
Moreover,
its set of critical values is equal to $\mathcal{A}_{\bm{\phi}}$.
\end{prop}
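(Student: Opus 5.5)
The plan mirrors the proof of Proposition \ref{proposition: gfqi k}, in two parts: the quadratic-at-infinity property, and the identification of the critical values.

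\textbf{Quadratic at infinity.} On $S^{2n} \times \mathbb{T}^k \times \mathbb{R}^{2n(k-1)} \times \mathbb{R}^N$, viewed as a trivial bundle over the compact base $S^{2n} \times \mathbb{T}^k$ with base coordinates $(z_0, \theta_0, \theta_1, \cdots, \theta_{k-1})$ and fibre coordinates $(z_1, \cdots, z_{k-1}, \zeta_0, \cdots, \zeta_{k-1})$, consider
\[
Q = \sum_{j=0}^{k-1} (F_j)_\infty(\zeta_j) + \sum_{j=1}^{k-2} \frac{1}{2}\,\langle\, i\, z_j , z_{j+1} \,\rangle \,.
\]
Since the $F_j$ are special, each $(F_j)_\infty$ is independent of the base variable, so $Q$ is independent of the base point.

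The first thing to check is that $Q$ is fibrewise non-degenerate. The $\zeta$-part is non-degenerate because each $(F_j)_\infty$ is. For the $z$-part, the form $\sum_{j=1}^{k-2}\frac12\langle i z_j, z_{j+1}\rangle$ in the $k-1$ variables $z_1, \cdots, z_{k-1}$ has a tridiagonal block matrix with zero diagonal and off-diagonal blocks $\pm\frac14 i$. A path of even length $k-1$ has a perfect matching, so its determinant is nonzero; this is exactly where $k$ odd enters. If needed, the non-degeneracy can be double-checked by solving the equations $i z_2 = 0$, $i(z_{j+1} - z_{j-1}) = 0$, $\ldots$, which force all $z_j = 0$ when $k-1$ is even.

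Next, boundedness of the vertical derivative. We have $\partial_{\zeta_j}(\overline{F} - Q) = \partial_{\zeta_j}(F_j - (F_j)_\infty)$, which is bounded since $F_j$ is special (it agrees with $(F_j)_\infty$ off a compact set). Likewise $\partial_{z_j}(\overline{F} - Q)$ is a sum of terms $\frac12 \partial_z F_j$, and these are bounded because $\partial_z F_j$ is compactly supported: the $z$-dependence of $F_j$ is compactly supported, $(F_j)_\infty$ does not depend on $z$, and the $\theta$-direction is periodic.

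Smoothness at $p_\infty$ is the step I expect to be the main obstacle. It needs justification that, near $z_0 = \infty$, all the $F_j$ terms equal $(F_j)_\infty(\zeta_j)$ uniformly in bounded $z_1, \cdots, z_{k-1}$. This holds because each argument $z_0 + \frac{z_j + z_{j+1}}{2}$ leaves the compact support as $z_0 \to \infty$. The subtlety is that for unbounded fibre variables this is not uniform, so smoothness and quadraticity must be argued separately. The fix is the same as in Proposition \ref{proposition: gfqi k}: in a neighbourhood of $p_\infty$ in stereographic coordinates, check that the function is smooth on each compact subset of the fibre, which suffices for smoothness; the bounded-derivative estimate above is global and does not need uniformity. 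Descent to $\mathbb{T}^k$ follows from the $\mathbb{Z}$-invariance \eqref{equation: action Z for F} in each $\theta_j$.

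\textbf{Critical values.} By Proposition \ref{proposition: composition formula contact flat}, critical points of $F_0 \,\sharp\, \cdots \,\sharp\, F_{k-1}$ over the finite part ($z_0 \ne p_\infty$) correspond to flat translated chains of $\bm{\phi}_{\mathbb{R}}$, with critical values equal to their actions. Passing to the quotient by the $\mathbb{Z}^k$-action does not change the values, since it is compatible with lifts. At $z_0 = p_\infty$, the function is $\sum_j (F_j)_\infty(\zeta_j) + Q_z$, whose only critical points are $\zeta = 0$, $z = 0$, with value $0$. We must also check that these points are genuinely critical in the base directions. This holds because near $p_\infty$ the function is locally independent of $(z_0, \theta)$, so $\partial_{\theta_j}$ and $\partial_{z_0}$ vanish there.

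Value $0$ is in $\mathcal{A}_{\bm{\phi}}$: any point outside the supports gives a flat translated chain with all $t_j = 0$, hence action $0$. Conversely, every flat translated chain of $\bm{\phi}$ lifts to one of $\bm{\phi}_{\mathbb{R}}$ with the same action and appears as a critical point. Hence the set of critical values equals $\mathcal{A}_{\bm{\phi}}$.
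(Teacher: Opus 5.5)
Your proposal is correct and follows the same route as the paper's proof. You use the same base-independent quadratic form $Q$, get boundedness of $\partial_v(\overline{F_0 \,\sharp\, \cdots \,\sharp\, F_{k-1}} - Q)$ from the compact support (modulo $\mathbb{Z}$) of $\partial_z F_j$ and $\partial_{\zeta_j}(F_j - (F_j)_\infty)$, and read off the critical values from Proposition~\ref{proposition: composition formula contact flat}. You also correctly supply details the paper leaves implicit: non-degeneracy of the $z$-part of $Q$ (where $k$ odd is used), smoothness of the extension over $p_\infty$, and the fact that critical points over $p_\infty$ have value $0 \in \mathcal{A}_{\bm{\phi}}$.
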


\begin{proof}
Consider the non-degenerate quadratic form $Q$
on $(S^{2n} \times \mathbb{T}^k) \times \mathbb{R}^{2n(k-1)}
\times \mathbb{R}^N$
defined by
\[
Q \big(p, (\theta_0, \cdots, \theta_{k-1});
z_1, \cdots, z_{k-1}, \zeta_0, \cdots, \zeta_{k-1} \big)
= \sum_{j = 0}^{k-1} \, (F_j)_{\infty} (\zeta_j)
+ \sum_{j = 1}^{k-2} \, \frac{1}{2} \, \langle\, i \, z_j , z_{j+1} \,\rangle \,.
\]
Since $\lVert \frac{\partial F}{\partial z_j} \rVert$
and $\lVert \frac{\partial}{\partial \zeta_j} \big(F_j - (F_j)_{\infty}\big) \rVert$
are bounded,
$\partial_v ( \overline{F_0 \,\sharp\, \cdots \,\sharp\, F_{k-1}} - Q )$ is bounded,
and thus $\overline{F_0 \,\sharp\, \cdots \,\sharp\, F_{k-1}}$ is quadratic at infinity.
The second statement follows from
Proposition \ref{proposition: composition formula contact flat}.
\end{proof}

We end this section with the following result.

\begin{prop}\label{proposition: stabilization etc contact}
Consider a sequence $\bm{\phi} = (\phi_0, \cdots, \phi_{k-1})$,
with $k$ odd,
of compactly supported contactomorphisms
of $(\mathbb{R}^{2n} \times S^1, \xi_0)$
contact isotopic to the identity,
and let $F_j^{(1)}: \mathbb{R}^{2n+1} \times \mathbb{R}^{N_j^{(1)}} \rightarrow \mathbb{R}$
and $F_j^{(2)}: \mathbb{R}^{2n+1} \times \mathbb{R}^{N_j^{(2)}} \rightarrow \mathbb{R}$
be special generating functions quadratic at infinity
for the $\phi_j$'s.
Then $\overline{F_0^{(1)} \,\sharp\, \cdots \,\sharp\, F_{k-1}^{(1)}}$
and $\overline{F_0^{(2)} \,\sharp\, \cdots \,\sharp\, F_{k-1}^{(2)}}$
are equivalent.
\end{prop}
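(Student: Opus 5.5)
The plan is to reduce to the uniqueness theorem of Viterbo and Théret for each factor, and then check that the $\sharp$-construction and its compactification are compatible with the elementary operations in the definition of equivalence. By \cites{Chaperon, Chekanov, Viterbo, Theret}, for each $j$ the functions $\overline{F_j^{(1)}}$ and $\overline{F_j^{(2)}}$ generate the same Legendrian $\overline{\Lambda_{\phi_j}}$ of $J^1(S^{2n}\times S^1)$, so they are equivalent. Thus there are non-degenerate quadratic forms $Q_j^{(1)}, Q_j^{(2)}$ and a fibre preserving diffeomorphism $\Phi_j$ of $(S^{2n}\times S^1)\times\mathbb{R}^{M_j}$, with $M_j = N_j^{(1)}+\dim Q_j^{(1)} = N_j^{(2)}+\dim Q_j^{(2)}$, such that
\[
F_j^{(1)}\oplus Q_j^{(1)} = (F_j^{(2)}\oplus Q_j^{(2)})\circ \Phi_j .
\]
Lifting to $\mathbb{R}^{2n+1}\times\mathbb{R}^{M_j}$, we write $\Phi_j(z,\theta,\zeta) = (z,\theta,\widehat\Phi_j(z,\theta,\zeta))$, with $\widehat\Phi_j$ invariant under the $\mathbb{Z}$-action \eqref{equation: action Z for F}.

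The key steps are as follows.

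\textbf{Step 1 (stabilization).} Note that $F_j\oplus Q_j$ is again a generating function quadratic at infinity for $\phi_j$, though possibly not special, since $Q_j$ need not be independent of the base. In any case
\[
(F_0\oplus Q_0)\,\sharp\cdots\sharp\,(F_{k-1}\oplus Q_{k-1}) = (F_0\,\sharp\cdots\sharp\, F_{k-1})\oplus \textstyle\sum_j Q_j ,
\]
where the added variables are fibre variables of the big bundle. Since the stabilizations may depend on the base point of each factor, I would first reduce to base-independent forms. Over the contractible $\mathbb{R}^{2n+1}$, with the sphere compactification, a non-degenerate family of quadratic forms is fibrewise isomorphic to a constant one, and this fibrewise isomorphism can be absorbed into $\Phi_j$. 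So we may assume each $Q_j$ is constant, and then this step is a direct computation.

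\textbf{Step 2 (the diffeomorphism).} Define on the domain of the stabilized $\sharp$-function the map
\[
\Psi(z_0,\theta_0,\dots,z_{k-1},\theta_{k-1},\zeta_0,\dots,\zeta_{k-1}) = \Big(z_0,\theta_0,\dots,\zeta_j' = \widehat\Phi_j\big(\tfrac{z_j+z_{j+1}}{2},\theta_j,\zeta_j\big),\dots\Big).
\]
It leaves all $z_j,\theta_j$ unchanged and acts on the $\zeta$'s by a diffeomorphism depending on the base and on the other fibre variables, so it is fibre preserving. It commutes with the change of variables $B$, which only touches the $z$'s, and it is $\mathbb{Z}^k$-equivariant. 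Since the $z,\theta$ arguments of each $F_j$ are untouched, we get $(\sharp_j (F_j^{(1)}\oplus Q_j^{(1)})) = (\sharp_j (F_j^{(2)}\oplus Q_j^{(2)}))\circ\Psi$. After conjugating by $B$ and passing to the quotient, $\Psi$ induces a fibre preserving diffeomorphism on $S^{2n}\times\mathbb{T}^k\times\mathbb{R}^{2n(k-1)}\times\mathbb{R}^{M}$. Combined with Step 1, this gives
\[
\overline{\sharp_j F^{(1)}_j}\oplus Q^{(1)} = \big(\overline{\sharp_j F^{(2)}_j}\oplus Q^{(2)}\big)\circ\overline\Psi .
\]

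\textbf{Main obstacle.} The delicate point is extending $\overline\Psi$ smoothly over $z_0=p_\infty$. At infinity the argument $z_0+\tfrac{z_j+z_{j+1}}{2}$ tends to $p_\infty$ in $S^{2n}$, and $\Phi_j$ is smooth on $S^{2n}\times S^1\times\mathbb{R}^{M_j}$, so $\widehat\Phi_j(\cdot,\theta_j,\zeta_j)$ converges to $\widehat\Phi_j(p_\infty,\theta_j,\zeta_j)$. However, smoothness in the stereographic chart at $p_\infty$ of the composite with the translation by $\tfrac{z_j+z_{j+1}}{2}$ needs an argument. I would first try to use the freedom in the uniqueness theorem: since $F_j^{(1)}$ and $F_j^{(2)}$ are special, they coincide with base-independent quadratic forms outside a compact set. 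So one can hope to choose $\Phi_j$ equal to a base-independent linear map for $z$ outside a compact set (modifying the isotopy-based proof of Théret near infinity, where both functions are constant in $z$). Then $\overline\Psi$ is independent of $z_0$ near $p_\infty$, and smoothness is automatic.
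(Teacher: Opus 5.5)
Your Steps 1 and 2 are the paper's proof. The ingredients match: per-factor equivalence from Viterbo--Th\'eret, stabilization commuting with $\sharp$, and the fibre preserving map acting on each $\zeta_j$ through $\Phi_j$ at the base point $\big(\tfrac{z_j+z_{j+1}}{2},\theta_j\big)$, conjugated by $B$ and descended to $S^{2n}\times\mathbb{T}^k$. The paper simply asserts that $B\circ\Phi\circ B^{-1}$ extends over $p_\infty$. A small slip: $\Psi$ does not commute with $B$. Its conjugate evaluates $\widehat\Phi_j$ at $z_0+\tfrac{w_j+w_{j+1}}{2}$, with $w_j=z_j-z_0$ and $w_0=w_k=0$. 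That is harmless, and it is the map you then study.

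The trouble is in the two repairs you add.

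First, Step 1 needs no reduction. In the Viterbo--Th\'eret uniqueness theorem, stabilizations are by constant non-degenerate quadratic forms on $\mathbb{R}^{M}$. The reduction you sketch would also fail in general. The relevant base is $S^{2n}\times S^1$, which is not contractible. A family of non-degenerate forms over it need not be fibrewise isomorphic to a constant one; its negative eigenbundle can be the pullback of $TS^{2n}$, for instance.

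Second, and more seriously, you leave the extension over $p_\infty$ to a hoped-for strengthening of Th\'eret's theorem, with $\Phi_j$ base-independent near infinity. As written this step is unproven, so there is a gap. No strengthening is needed. Translations of $\mathbb{R}^{2n}$ act on $S^{2n}$ by M\"obius maps fixing $p_\infty$. In the inverted chart $u=z/|z|^2$, translation by $w$ reads
\[
u\longmapsto \frac{u+|u|^2\,w}{1+2\langle u,w\rangle+|u|^2|w|^2},
\]
which is jointly smooth in $(u,w)$ near $u=0$ and sends $0$ to $0$. Hence $(z_0,w)\mapsto z_0+w$ extends to a smooth map $S^{2n}\times\mathbb{R}^{2n}\to S^{2n}$. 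Since $\Phi_j$ and $\Phi_j^{-1}$ are smooth on $S^{2n}\times S^1\times\mathbb{R}^{M_j}$, the vertical maps $\zeta_j\mapsto\widehat\Phi_j\big(z_0+\tfrac{w_j+w_{j+1}}{2},\theta_j,\zeta_j\big)$ and their inverses extend smoothly to $z_0=p_\infty$. This makes $\overline\Psi$ a fibre preserving diffeomorphism and completes your argument without touching the uniqueness theorem.
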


\begin{proof}
By \cite{Viterbo, Theret},
for every $j$ the functions $\overline{F_j^{(1)}}$
and $\overline{F_j^{(2)}}$ are equivalent,
thus there are non-degenerate quadratic forms
$Q_j^{(1)}$ on $\mathbb{R}^{2n+1} \times \mathbb{R}^{M_j^{(1)}}$
and $Q_j^{(2)}$ on $\mathbb{R}^{2n+1} \times \mathbb{R}^{M_j^{(2)}}$,
for $M_j^{(1)}$ and $M_j^{(2)}$ such that
$$
N_j^{(1)} + M_j^{(1)} = N_j^{(2)} + M_j^{(2)} =: N_j \,,
$$
and a fibre preserving diffeomorphism $\Phi_j$
of $\mathbb{R}^{2n+1} \times \mathbb{R}^{N_j}$
such that
\[
F_j^{(1)} \oplus Q_j^{(1)} = (F_j^{(2)} \oplus Q_j^{(2)}) \circ \Phi_j \,.
\]
For $l = 1, 2$ we have
\[
\overline{ (F_0^{(l)} \oplus Q_0^{(l)}) \,\sharp\, \cdots \,\sharp\, (F_{k-1}^{(l)} \oplus Q_{k-1}^{(l)}) }
= \overline{ F_0^{(l)} \,\sharp\, \cdots \,\sharp\, F_{k-1}^{(l)} }
\oplus \big(Q_0^{(l)} \oplus \cdots \oplus Q_{k-1}^{(l)}\big) \,.
\]
Moreover,
\[
\overline{ (F_0^{(1)} \oplus Q_0^{(1)}) \,\sharp\, \cdots \,\sharp\, (F_{k-1}^{(1)} \oplus Q_{k-1}^{(1)}) }
= \big( \overline{ (F_0^{(2)} \oplus Q_0^{(2)}) \,\sharp\, \cdots \,\sharp\, (F_{k-1}^{(2)}
\oplus Q_{k-1}^{(2)}) } \big)
\circ \Phi \,,
\]
for the diffeomorphism
\[
\Phi: (z_0, \theta_0, \cdots , z_{k-1}, \theta_{k-1}, \zeta_0, \cdots , \zeta_{k-1})
\mapsto (z_0, \theta_0, \cdots , z_{k-1}, \theta_{k-1},
\zeta_0', \cdots , \zeta_{k-1}')
\]
of $\mathbb{R}^{(2n+1)k} \times \mathbb{R}^N$,
where $N = N_0+\cdots+N_{k-1}$,
defined by
\[
\Big( \frac{z_j + z_{j+1}}{2} \,,\, \theta_j \,,\, \zeta_j' \Big)
= \Phi_j \Big(\frac{z_j + z_{j+1}}{2} \,,\, \theta_j \,,\, \zeta_j \Big) \,.
\]
The conjugation $B \,\circ\, \Phi \,\circ\, B^{-1}$
extends to a fibre preserving diffeomorphism
of $S^{2n} \times \mathbb{R} \times \mathbb{R}^{(2n+1)(k-1)} \times \mathbb{R}^N$,
which descends to a fibre preserving diffeomorphism $\overline{\Phi}$
of $S^{2n} \times \mathbb{T}^k \times \mathbb{R}^{2n(k-1)} \times \mathbb{R}^N$,
and we have
\[
\overline{ F_0^{(1)} \,\sharp\, \cdots \,\sharp\, F_{k-1}^{(1)} }
\oplus (Q_0^{(1)} \oplus \cdots \oplus Q_{k-1}^{(1)})
= \overline{ (F_0^{(1)} \oplus Q_0^{(1)}) \,\sharp\, \cdots \,\sharp\,
(F_{k-1}^{(1)} \oplus Q_{k-1}^{(1)}) }
\]
\[
= \overline{ (F_0^{(2)} \oplus Q_0^{(2)}) \,\sharp\, \cdots \,\sharp\, (F_{k-1}^{(2)} \oplus Q_{k-1}^{(2)}) }
\circ \overline{\Phi}
= \Big(\overline{ F_0^{(2)} \,\sharp\, \cdots \,\sharp\, F_{k-1}^{(2)} }
\oplus (Q_0^{(2)} \oplus \cdots \oplus Q_{k-1}^{(2)})\Big)
\circ \overline{\Phi} \,.
\]
This shows that $\overline{ F_0^{(1)} \,\sharp\, \cdots \,\sharp\, F_{k-1}^{(1)} }$
and $\overline{ F_0^{(2)} \,\sharp\, \cdots \,\sharp\, F_{k-1}^{(2)} }$
are equivalent.
\end{proof}


\section{Spectral selectors}\label{section: spectral selectors}

After recalling from \cite{Viterbo, San10, San11}
the definition of the spectral selectors $c_-$ and $c_+$
for compactly supported Hamiltonian diffeomorphisms
of $(\mathbb{R}^{2n}, \omega_0)$
and compactly supported contactomorphisms
of $(\mathbb{R}^{2n} \times S^1, \xi_0)$
contact isotopic to the identity,
we define spectral selectors
detecting flat translated chains
of sequences of compactly supported contactomorphisms
of $(\mathbb{R}^{2n} \times S^1, \xi_0)$
contact isotopic to the identity.
We then prove the properties of these spectral selectors
listed in Proposition \ref{proposition: properties spectral selectors intro}
and Lemma \ref{lemma: bound contact}.

Let $F: E = B \times \mathbb{R}^N \rightarrow \mathbb{R}$
be a function quadratic at infinity over a compact orientable manifold $B$.
Consider the decomposition $E = E^+ \oplus E^- \rightarrow B$
of $E \rightarrow B$ as the direct sum of the vector bundles
$E^+ \rightarrow B$ and $E^- \rightarrow B$
on which the quadratic form $F_{\infty}$
is respectively positive and negative definite.
Denote by $F^-$ the restriction of $F$ to $E^-$.
We consider the sublevel sets
\[
E^{a} = \{\, e \in E \;\lvert\; F(e) \leq a \,\} \,,
\]
and we denote $E^{-\infty} = E^{a}$
for $a$ smaller than all the critical values of $F$.
Similarly, we denote
\[
(E^-)^{-\infty}
= \{\, e \in E^- \;\lvert\; F^- (e) \leq a  \,\}
\]
for $a$ smaller than all the critical values of $F$.
The pair $(E, E^{-\infty})$ deformation retracts to $(E^-, (E^-)^{-\infty})$,
and so we have an isomorphism
\begin{equation}\label{equation: excision}
H^{\ast} \big( D(E^-), S(E^-) \big) \rightarrow H^{\ast} (E^-, (E^-)^{-\infty})
\rightarrow H^{\ast} (E, E^{-\infty}) \,,
\end{equation}
where $D(E^-)$ and $S(E^-)$ are the disc and sphere bundles
of $E^- \rightarrow B$,
and where the first arrow is given by excision.
We denote by 
\[
T: H^{\ast}(B) \rightarrow H^{\ast + \ind(F_{\infty})} (E, E^{-\infty})
\]
the composition of the isomorphism \eqref{equation: excision}
with the Thom isomorphism
\[
H^{\ast}(B) \rightarrow H^{\ast + \ind(F_{\infty})} \big( D(E^-), S(E^-) \big) \,.
\]
For any $a \in \mathbb{R}$ we then consider the homomorphism
\[
i_a^{\ast}: H^{\ast} (E, E^{-\infty}) \rightarrow H^{\ast} (E^{a}, E^{-\infty})
\]
induced by the inclusion
$i_a: (E^{a}, E^{-\infty}) \hookrightarrow (E, E^{-\infty})$,
and the composition
\[
i_a^{\,\ast} \,\circ\, T:
H^{\ast} (B) \rightarrow H^{\ast + \ind (F_{\infty})} (E^{a}, E^{-\infty}) \,.
\]

For $a$ smaller than all the critical values of $F$
the homomorphism $i_a^{\,\ast} \,\circ\, T$ vanishes,
while for $a$ bigger than all the critical values of $F$
it is an isomorphism.
For any non-zero class $u$ in $H^{\ast}(B)$ we define
$$
c(u,F) = \inf \left\{\, a \in \mathbb{R} ~|~
i_a^{\, \ast} \,\circ\, T \, (u) \neq 0 \,\right\} \,.
$$
Then $i_a^{\,\ast} \,\circ\, T \, (u) = 0$
for all $a < c (u, F)$,
and $i_a^{\,\ast} \,\circ\, T \, (u) \neq 0$
for all $a \geq c (u, F)$.
Moreover,
denoting by $1_B \in H^0(B)$ the unit class
and by $\mu_B \in H^{\dim(B)}(B)$
the dual of the fundamental class,
we have the following properties \cite{Viterbo}.

\begin{prop}\label{proposition: properties spectral selectors fqi}
The spectral selectors $c (u, F)$ satisfy the following properties:
\renewcommand{\theenumi}{\roman{enumi}}
\begin{enumerate}
\item \label{properties spectral sel gf: spectrality} \emph{Spectrality:}
$c(u,F)$ is a critical value of $F$.
\item \label{properties spectral sel gf: monotonicity} \emph{Monotonicity:}
if $F_1, F_2: B \times \mathbb{R}^{N} \rightarrow \mathbb{R}$
satisfy $F_1 \leq F_2$
then $c (u, F_1) \leq c (u, F_2)$.
\item \label{properties spectral sel gf: non-degeneracy} \emph{Non-degeneracy:}
if $c (1_B, F) = c (\mu_B, F) = c$ then
the restriction of the projection $E \rightarrow B$
to the set of critical points of $F$ of critical value $c$
is surjective.
\item \label{properties spectral sel gf: equivalence} \emph{Equivalence:}
if $F_1$ and $F_2$ are equivalent then $c (u, F_1) = c (u, F_2)$.
\end{enumerate}
\end{prop}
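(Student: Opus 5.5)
These are the standard properties of Viterbo's minimax selectors for a function $F$ quadratic at infinity over a compact base $B$. I would prove them with Morse-theoretic deformation arguments on sublevel sets, in the order spectrality, monotonicity, equivalence, non-degeneracy. Throughout I use that, since $\partial_v(F-F_\infty)$ is bounded, $F$ satisfies the Palais--Smale condition for vertical and horizontal gradient-like flows. Hence the deformation lemma holds: if $[a,b]$ contains no critical value, then $E^{b}$ deformation retracts onto $E^{a}$, relative to $E^{-\infty}$.

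\textbf{Spectrality.} Suppose $c=c(u,F)$ is not a critical value. The critical values form a closed set, so there is $\varepsilon>0$ with $[c-\varepsilon,c+\varepsilon]$ free of critical values. The deformation lemma then makes the inclusion $(E^{c-\varepsilon},E^{-\infty})\hookrightarrow(E^{c+\varepsilon},E^{-\infty})$ a homotopy equivalence. Therefore $i_{c-\varepsilon}^*T(u)\neq 0$ if and only if $i_{c+\varepsilon}^*T(u)\neq0$, which contradicts the definition of $c$ as an infimum.

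\textbf{Monotonicity.} If $F_1\le F_2$, the sublevel sets satisfy $E_2^{a}\subset E_1^{a}$. Choose $a$ below all critical values of both functions and use the common $E^{-\infty}$ pair, which is possible since both functions have the same quadratic form at infinity up to a bounded perturbation. Then $i^{F_2}_a$ factors through $i^{F_1}_a$. Both Thom maps $T$ agree: they are defined through the deformation retraction to $(E^-,(E^-)^{-\infty})$, and the straight-line homotopy $F_s=(1-s)F_1+sF_2$ stays quadratic at infinity with the same $F_\infty$. Hence $i^{F_1,*}_aT(u)=0$ implies $i^{F_2,*}_aT(u)=0$, which gives $c(u,F_1)\le c(u,F_2)$.

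\textbf{Equivalence.} Invariance under a fibre-preserving diffeomorphism $\Phi$ is immediate, since $\Phi$ maps sublevel sets to sublevel sets and commutes with the Thom isomorphism up to the induced map on $B$. Since $\Phi$ is fibre-preserving over the identity of $B$, that induced map is the identity. For invariance under adding a non-degenerate $Q$, I use the Künneth/Thom splitting $(E\oplus E', (E\oplus E')^{-\infty}) \simeq (E,E^{-\infty})\times(D(E'^-),S(E'^-))$ at the level of sublevel sets, which is compatible with $T$.

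\textbf{Non-degeneracy.} This is the main obstacle. I would argue by Lusternik--Schnirelmann. Let $K_c$ be the critical set at level $c$ and suppose its projection to $B$ misses a point $b$. Choose a neighbourhood $U$ of $K_c$ projecting into $B\setminus\{b\}$. By the deformation lemma, $E^{c+\varepsilon}$ deforms into $E^{c-\varepsilon}\cup U$. Since $c(1_B,F)=c$, the class $T(\mu_B)=T(1_B)\cup \pi^*\mu_B$ restricts nontrivially to $E^{c+\varepsilon}$. On the other hand, $\mu_B$ is supported near $b$, so $\pi^*\mu_B$ vanishes on $U$, while $T(1_B)$ vanishes on $E^{c-\varepsilon}$ because $c(1_B,F)=c$. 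By the relative cup-product argument, $T(\mu_B)$ then vanishes on $E^{c-\varepsilon}\cup U$, hence on $E^{c+\varepsilon}$, which is a contradiction. The delicate points are getting the relative cup product right and realizing $\pi^*\mu_B$ by a cocycle supported in $\pi^{-1}$ of a small ball around $b$.
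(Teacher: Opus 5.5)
The paper gives no proof of this proposition and simply cites Viterbo. Your argument is the standard proof given there: the deformation lemma for spectrality, inclusion of sublevel sets for monotonicity, invariance under fibred diffeomorphisms and stabilization for equivalence, and the Lusternik--Schnirelmann cup-product argument for non-degeneracy. It is correct in substance, with two small fixes.

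In the non-degeneracy step, the non-vanishing of $T(\mu_B)$ on $E^{c+\varepsilon}$ comes from $c(\mu_B,F)=c$, not from $c(1_B,F)=c$.

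In the monotonicity step, the common quadratic form at infinity (hence the common $E^{-\infty}$) is an extra hypothesis you are adding, not a consequence of $F_1\le F_2$. It is genuinely needed. For $B$ a point, take $F_2(\zeta)=\zeta^2$ and any $F_1\le F_2$ that equals $\zeta^2$ on $[-1,1]$ and $-\zeta^2$ outside a compact set. Then $c(1,F_1)=\max F_1\ge 1>0=c(1,F_2)$. So this hypothesis must be read as implicit in the statement.
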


We will also need the following two lemmas.

\begin{lemma}\label{lemma: continuation}
Let $F_s: E = B \times \mathbb{R}^N \rightarrow \mathbb{R}$,
for $s \in [0, 1]$,
be a $1$-parameter family of functions quadratic at infinity.
Assume that all the functions $F_s$
have the same set of critical values.
For every non-zero class $u$ in $H^{\ast}(B)$
we then have
\[
c (u, F_0) = c (u, F_1) \,.
\]
\end{lemma}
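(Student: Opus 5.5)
The plan is to show that $s \mapsto c(u, F_s)$ is continuous and takes values in the common set of critical values $\mathcal{C}$ of the $F_s$. Then we show that $\mathcal{C}$ has empty interior, so that a continuous function with values in $\mathcal{C}$ must be constant.

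First, spectrality. By Proposition \ref{proposition: properties spectral selectors fqi} (\ref{properties spectral sel gf: spectrality}), $c(u, F_s) \in \mathcal{C}$ for every $s$. Moreover, $\mathcal{C}$ is the set of critical values of $F_0$. Because $F_0$ is quadratic at infinity over a compact base, its critical set is contained in a region where Sard's theorem applies after a standard reduction. Concretely, outside a compact set the fibre derivative of $F_0$ equals that of a non-degenerate quadratic form up to a bounded term, so critical points lie in a compact set. Hence $\mathcal{C}$ is compact and of measure zero, and in particular totally disconnected as a subset of $\mathbb{R}$.

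Second, continuity in $s$. We may assume, after the usual reduction (a fibre-preserving isotopy, or using that quadratic-at-infinity families can be normalized so that $F_s - F_{s'}$ is bounded with uniform quadratic form at infinity), that
\[
\sup \lvert F_s - F_{s'} \rvert \leq \varepsilon(\lvert s - s' \rvert) \to 0 .
\]
Monotonicity (Proposition \ref{proposition: properties spectral selectors fqi} (\ref{properties spectral sel gf: monotonicity})) together with the trivial shift property $c(u, F + C) = c(u, F) + C$ for constants $C$ then gives
\[
\lvert c(u, F_s) - c(u, F_{s'}) \rvert \leq \sup \lvert F_s - F_{s'} \rvert .
\]
This is the step I expect to be the main obstacle. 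The functions $F_s$ need not coincide outside a compact set, and their quadratic forms at infinity may vary with $s$. I would handle this by first applying a fibre-preserving isotopy $\Phi_s$ that makes the quadratic form at infinity constant in $s$; the equivalence property (\ref{properties spectral sel gf: equivalence}) ensures this does not change $c(u, \cdot)$. Then I would cut off the differences $F_s - F_{s_0}$ outside a large compact set containing all critical points. A sufficiently large such set exists by compactness of $[0,1]$, and the cutoff does not alter the sublevel topology between critical levels.

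Finally, $s \mapsto c(u, F_s)$ is a continuous map from the connected interval $[0,1]$ into the totally disconnected set $\mathcal{C}$, hence constant. This gives $c(u, F_0) = c(u, F_1)$.
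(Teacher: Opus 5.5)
\textbf{There is a genuine gap in your continuity step.}

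Your first step is sound. Since $\lvert \partial_v F_\infty \rvert$ grows linearly in the fibre while $\partial_v (F_0 - (F_0)_\infty)$ is bounded, the critical set of $F_0$ is compact. Sard's theorem then shows that the common set $\mathcal{C}$ of critical values has empty interior, so a continuous map $[0,1] \to \mathcal{C}$ is constant.

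The gap is the continuity of $s \mapsto c(u, F_s)$, which carries all the content of the lemma and is not proved. The estimate $\lvert c(u,F_s) - c(u,F_{s'})\rvert \le \sup \lvert F_s - F_{s'}\rvert$ needs a common quadratic form at infinity and a bounded, small difference. Neither is available:
\begin{itemize}
\item The definition of quadratic at infinity used here only bounds $\partial_v (F - F_\infty)$, so $F - F_\infty$ may grow linearly.
\item In the family $G_s$ to which the lemma is applied in the stabilization proof, the quadratic part contains $\frac{s}{2}\langle i z_{k-1}, z_k\rangle$, so $G_s - G_{s'}$ is unbounded.
\item Conjugating by a fibrewise linear $\Phi_s$ with $(F_s)_\infty \circ \Phi_s = (F_0)_\infty$ fixes the quadratic form. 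It does not make $F_s\circ\Phi_s - F_{s'}\circ\Phi_{s'}$ uniformly small. The non-quadratic parts are supported in non-compact tubes such as $\{\lvert z_0 + \frac{z_j + z_{j+1}}{2}\rvert \le C\}$, while $\Phi_s x - \Phi_{s'} x$ is only $O(\lvert s - s'\rvert\,\lvert x\rvert)$.
\end{itemize}
So you are forced to cut off. Everything then rests on the claim that modifying $F_s$ outside a compact set containing all critical points leaves $c(u,\cdot)$ unchanged. You state this but do not prove it, and it is the same type of statement as the lemma itself: a path of functions along which no critical value moves leaves $c(u,\cdot)$ unchanged. You also need uniform-in-$s$ bounds on $\partial_v(F_s - (F_s)_\infty)$ for a single such compact set to exist; compactness of $[0,1]$ alone does not give this.

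\textbf{How the paper avoids this.} The paper sidesteps continuity entirely by quoting a deformation lemma from FSZ (Lemma 4.3 there). For every common regular value $a$, there is an isotopy of $E$ carrying $(E^a_0, E^{-\infty})$ onto $(E^a_s, E^{-\infty})$. Hence $i_a^{\ast}\circ T(u) \neq 0$ holds for $F_0$ if and only if it holds for $F_1$. Taking $a = c(u,F_0) + \epsilon$ regular and using density of regular values gives $c(u,F_1) \le c(u,F_0)$, and the reverse inequality follows symmetrically.

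\textbf{How your route could be completed.} It can be finished without that lemma, but only under an extra hypothesis: $F_s - (F_s)_\infty$ is bounded with bounded fibre derivative, uniformly in $s$. This holds for the functions built from special generating functions, but not under the general definition. Under it, take the linear path from $F_s\circ\Phi_s$ to its cutoff (which agrees with $F_{s_0}\circ\Phi_{s_0}$ far out).
\begin{itemize}
\item For a large cutoff radius, this path creates no critical points where its endpoints differ.
\item Its members share a quadratic form at infinity and have bounded differences.
\item Your Lipschitz-plus-null-set argument, applied to this path, then justifies the cutoff step.
\item Continuity in $s$ follows from uniform convergence of $F_s\circ\Phi_s$ on a compact set.
\end{itemize}
As written, however, the step you yourself flagged as the main obstacle remains open.
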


\begin{proof}
Denote $c_s = c (u, F_s)$,
$E_s^{a} = \{ F_s \leq a \}$
and $i_a^{(s)}$ the inclusion
$(E_s^{a}, E^{-\infty}) \hookrightarrow (E, E^{-\infty})$.
By \cite[Lemma 4.3]{FSZ},
for every regular value $a$ of $F_0$
(hence of $F_s$ for all $s$)
there is an isotopy $\{\theta_s\}$ of $E$
such that $\theta_s (E_0^{a}, E^{-\infty}) = (E_s^a, E^{-\infty})$,
and so $(i_a^{(0)})^{\ast} \,\circ\, T \, (u) = 0$
if and only if $(i_a^{(1)})^{\ast} \,\circ\, T \, (u) = 0$.
For every $\epsilon > 0$
such that $c_0 + \epsilon$ is a regular value of $F_0$,
the fact that $(i_{c_0 + \epsilon}^{(0)})^{\ast} \,\circ\, T \, (u) \neq 0$
implies thus that $(i_{c_0 + \epsilon}^{(1)})^{\ast} \,\circ\, T \, (u) \neq 0$.
Since the set of regular values is dense,
this implies that $c_1 \leq c_0$.
Similarly,
inverting the roles of $c_0$ and $c_1$
we obtain $c_0 \leq c_1$,
and so $c_0 = c_1$.
\end{proof}

\begin{lemma}\label{lemma: product}
Let $B_1$ and $B_2$ be compact connected orientable manifolds,
and $f: B_1 \times \mathbb{R}^N \rightarrow \mathbb{R}$
a function quadratic at infinity.
Consider the pullback
$F: B_1 \times B_2 \times \mathbb{R}^N
\rightarrow \mathbb{R}$
of $f$
by the projection
$B_1 \times B_2 \times \mathbb{R}^N
\rightarrow B_1 \times \mathbb{R}^N$.
Then
\[
c (1_{B_1}, f) = c (1_{B_1 \times B_2}, F) \quad \text{ and } \quad
c (\mu_{B_1}, f) = c (\mu_{B_1 \times B_2}, F) \,.
\]
\end{lemma}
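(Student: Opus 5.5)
The plan is to compare the sublevel-set cohomology of $F$ with that of $f$ via the Künneth formula, using the fact that $F$ is a product of $f$ with the identity in the $B_2$ direction. Write $E_1 = B_1 \times \mathbb{R}^N$ and $E = B_1 \times B_2 \times \mathbb{R}^N = E_1 \times B_2$. Since $F = f \circ \pr$, we have $E^a = E_1^a \times B_2$ for every $a$, and in particular $E^{-\infty} = E_1^{-\infty} \times B_2$. Moreover $F_\infty$ is the pullback of $f_\infty$, so the negative bundle $E^-$ is the pullback of $E_1^-$, and $\ind(F_\infty) = \ind(f_\infty)$.

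First, I will show that the Thom-type isomorphism $T$ for $F$ is the cross product of the one for $f$ with the identity of $H^*(B_2)$. That is, for $u \in H^*(B_1)$ and $v \in H^*(B_2)$,
\[
T_F(u \times v) = T_f(u) \times v \in H^*(E_1 \times B_2, E_1^{-\infty} \times B_2) \,.
\]
The Thom isomorphism is natural under pullback and multiplicative with respect to cross products with classes on the base $B_2$, on which the disc and sphere bundles are trivial. The deformation retraction and excision in \eqref{equation: excision} are likewise products with $B_2$, so they commute with the cross product.

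Second, I will apply the relative Künneth formula. With field coefficients and $B_2$ compact,
\[
H^*(E_1^a \times B_2, E_1^{-\infty} \times B_2) \cong H^*(E_1^a, E_1^{-\infty}) \otimes H^*(B_2) \,,
\]
naturally in $a$, so that $i_a^*$ for $F$ corresponds to $i_a^* \otimes \id$ for $f$.

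Third, I will compute the two selectors.
\begin{itemize}
\item Since $1_{B_1 \times B_2} = 1_{B_1} \times 1_{B_2}$, we get $i_a^* T_F(1_{B_1 \times B_2}) = i_a^* T_f(1_{B_1}) \otimes 1_{B_2}$. This vanishes if and only if $i_a^* T_f(1_{B_1}) = 0$, because $1_{B_2} \neq 0$.
\item Since $\mu_{B_1 \times B_2} = \pm\, \mu_{B_1} \times \mu_{B_2}$, with $\mu_{B_2} \neq 0$ because $B_2$ is compact, connected and orientable, the same argument applies.
\end{itemize}
Taking infima over $a$ then gives both equalities.

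The main obstacle is the first step: justifying that $T$ commutes with cross products, including the compatibility of the excision and deformation-retraction identifications with the product structure. I would first check this at the level of Thom classes. The Thom class of $E^- = \pr^* E_1^-$ is $\tau_{E_1^-} \times 1_{B_2}$, and $T_F(w) = \pr^*(w) \cup \tau$. Hence $T_F(u \times v) = (u \cup \tau_{E_1^-}) \times v$ up to sign. The remaining identifications are induced by maps of the form $(\text{map}) \times \id_{B_2}$, and are therefore compatible with cross products by naturality.
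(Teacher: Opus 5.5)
Your proposal is correct and is essentially the paper's argument: the paper also compares $i_a^*\circ T$ for $f$ and $F$ through commutative squares whose vertical maps are pullback by the projection (that is, $\times 1_{B_2}$) and $\times\mu_{B_2}$, and concludes from the injectivity of the right-hand vertical map. Your K\"unneth formulation (with field coefficients) treats both cases uniformly and spells out why $T$ commutes with cross products, which the paper leaves implicit; note also that only injectivity is needed there, not the bijectivity the paper asserts for the $\mu_{B_2}$ square, and injectivity is exactly what your tensor-product argument gives.
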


\begin{proof}
Denoting by $E_f$ and $E_F$ the domains
of $f$ and $F$ respectively,
and by $(E_f)^{a}$ and $(E_F)^{a}$ the sublevel sets,
we have $E_F = E_f \times B_2$
and $(E_F)^{a} = (E_f)^{a} \times B_2$ for all $a$.
Denote moreover by $(i_a)_f: \big((E_f)^{a}, (E_f)^{-\infty}\big)
\hookrightarrow \big(E_f, (E_f)^{-\infty})$
and $(i_a)_F: \big((E_F)^{a}, (E_F)^{-\infty}\big)
\hookrightarrow \big(E_F, (E_F)^{-\infty})$
the inclusions,
and by $T_f: H^{\ast} (B_1)
\rightarrow H^{\ast + i} \big(E_f, (E_f)^{-\infty}\big)$
and $T_F: H^{\ast} (B_1 \times B_2)
\rightarrow H^{\ast + i} \big(E_F, (E_F)^{-\infty}\big)$,
where $i = \ind (f_{\infty}) = \ind (F_{\infty})$,
the Thom isomorphisms.
Consider the commutative diagram
\[
\begin{tikzcd}
H^{\ast} (B_1)
\arrow[r, "(i_a)_f^{\ast} \,\circ\, T_f"] \arrow{d}
&[4em]  H^{\ast + i} \big( (E_f)^a, (E_f)^{-\infty} \big) \arrow{d} \\
[1em] H^{\ast} (B_1 \times B_2)
\arrow[r, "(i_a)_F^{\ast} \,\circ\, T_F"]
&[4em]  H^{\ast + i} \big( (E_F)^a, (E_F)^{-\infty}\big)
\end{tikzcd}
\]
where the vertical arrows are the homomorphisms
induced by the projections $B_1 \times B_2 \rightarrow B_1$
and
\[
\big( (E_F)^{a}, (E_F)^{-\infty} \big)
= \big( (E_f)^{a} \times B_2, (E_f)^{-\infty} \times B_2 \big)
\rightarrow \big( (E_f)^{a}, (E_f)^{-\infty} \big) \,.
\]
Since the vertical arrow on the left
sends $1_{B_1}$ to $1_{B_1 \times B_2}$
and since the vertical arrow on the right is injective,
we deduce that the image of $1_{B_1}$
by $(i_a)_f^{\,\ast} \,\circ\, T_f$ is non-zero
if and only if the image of $1_{B_1 \times B_2}$
by $(i_a)_F^{\,\ast} \,\circ\, T_F$ is non-zero,
and so $c (1_{B_1}, f) = c (1_{B_1 \times B_2}, F)$.

Similarly,
by considering the commutative diagram
\[
\begin{tikzcd}
H^{m_1} (B_1)
\arrow[r, "(i_a)_f^{\ast} \,\circ\, T_f"] \arrow{d}{\smallsmile \, \mu_{B_2}}
&[4em]  H^{m_1 + i} \big( (E_f)^a, (E_f)^{-\infty} \big) \arrow{d}{\smallsmile \, \mu_{B_2}} \\
[1em] H^{m_1 + m_2} (B_1 \times B_2)
\arrow[r, "(i_a)_F^{\ast} \,\circ\, T_F"]
&[4em]  H^{m_1 + m_2 + i} \big( (E_F)^a, (E_F)^{-\infty}\big)
\end{tikzcd}
\]
where $m_1$ and $m_2$ are the dimensions of $B_1$ and $B_2$ respectively
and where the vertical arrows are the isomorphisms
given by the cup product with $\mu_{B_2}$,
we obtain $c (\mu_{B_1}, f) = c (\mu_{B_1 \times B_2}, F)$,
since $\mu_{B_1} \smallsmile \mu_{B_2} = \mu_{B_1 \times B_2}$.
\end{proof}

For a compactly supported Hamiltonian diffeomorphism
$\varphi$ of $(\mathbb{R}^{2n}, \omega_0)$,
we define
$$
c_- (\varphi) = c (1_{S^{2n}} \,,\, \overline{F}) \quad \text{ and } \quad
c_+ (\varphi) = c (\mu_{S^{2n}} \,,\, \overline{F}) \,,
$$
where $\overline{F}: S^{2n} \times \mathbb{R}^N \rightarrow \mathbb{R}$
is any normalized generating function quadratic at infinity for $\varphi$.
Since the set of critical values of $\overline{F}$
is equal to the action spectrum $\mathcal{A}_{\varphi}$,
we have $c_{\pm} (\varphi) \in \mathcal{A}_{\varphi}$.
Since moreover all normalized
generating functions quadratic at infinity for $\varphi$
are equivalent \cite{Viterbo, Theret},
by Proposition \ref{proposition: properties spectral selectors fqi}
(\ref{properties spectral sel gf: equivalence})
the numbers $c_{\pm} (\varphi)$ do not depend on the choice
of normalized generating function quadratic at infinity for $\varphi$.
In particular,
by Proposition \ref{proposition: gfqi k}
we thus obtain the following lemma.

\begin{lemma}\label{lemma: spectral selectors iteration}
Let $(\varphi_0, \cdots, \varphi_{k-1})$
be a sequence of compactly supported
Hamiltonian diffeomorphisms of $(\mathbb{R}^{2n}, \omega_0)$,
and $F_j$ special generating functions quadratic at infinity
for the $\varphi_j$'s.
Then
\[
c_- \, (\varphi_{k-1} \circ \cdots \circ \varphi_0)
= c \, (1_{S^{2n}} \,,\, \overline{F_0 \,\sharp\, \cdots \,\sharp\, F_{k-1}})
\]
and
\[
c_+ \, (\varphi_{k-1} \circ \cdots \circ \varphi_0)
= c \, (\mu_{S^{2n}} \,,\, \overline{F_0 \,\sharp\, \cdots \,\sharp\, F_{k-1}}) \,.
\]
\end{lemma}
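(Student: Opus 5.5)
The plan is to obtain the statement as a direct consequence of Proposition \ref{proposition: gfqi k} together with the definition of $c_\pm$ and its independence from the choice of normalized generating function quadratic at infinity. There is one subtlety: Proposition \ref{proposition: gfqi k} only covers odd $k$, while the lemma is stated for an arbitrary sequence. I will therefore treat the odd and even cases separately.

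\emph{Odd $k$.} By Proposition \ref{proposition: gfqi k}, $\overline{F_0 \,\sharp\,\cdots\,\sharp\, F_{k-1}}: S^{2n}\times(\mathbb{R}^{2n(k-1)}\times\mathbb{R}^N)\to\mathbb{R}$ is a normalized generating function quadratic at infinity for $\varphi := \varphi_{k-1}\circ\cdots\circ\varphi_0$. By the uniqueness theorem of Viterbo and Th\'eret, all normalized generating functions quadratic at infinity for $\varphi$ are equivalent. Proposition \ref{proposition: properties spectral selectors fqi} (\ref{properties spectral sel gf: equivalence}) then shows that $c(1_{S^{2n}},\cdot)$ and $c(\mu_{S^{2n}},\cdot)$ take the same values on this function as on any other normalized generating function quadratic at infinity for $\varphi$. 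By definition these values are $c_-(\varphi)$ and $c_+(\varphi)$, which proves the lemma.

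\emph{Even $k$.} Here I would append the identity, which has the special generating function quadratic at infinity $F_k(z,\zeta)=Q(\zeta)$ for a non-degenerate quadratic form $Q$. The sequence $(\varphi_0,\dots,\varphi_{k-1},\id)$ then has odd length and the same composition $\varphi$, so the odd case applies to it. It remains to see that this is what the notation means for even $k$, consistent with how the paper defines $c_\pm$ for even sequences via $(\bm\phi,\id)$. I expect the lemma to be intended only for odd $k$, with the symbol $\overline{F_0\,\sharp\cdots\sharp\,F_{k-1}}$ defined only in that case, so that the even case is vacuous or handled by this convention.

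The main (mild) obstacle is to check that normalization is preserved, so that the additive constant in the uniqueness theorem vanishes. Proposition \ref{proposition: gfqi k} already asserts this. If a direct check were wanted, I would observe that at $p_\infty$ each special $F_j$ equals its quadratic form, so the critical point over $p_\infty$ has $\zeta_j=0$ and $z_j=0$, giving critical value $0$.
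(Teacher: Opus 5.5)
Your proposal is correct and follows the paper's argument: Proposition \ref{proposition: gfqi k} makes $\overline{F_0 \,\sharp\,\cdots\,\sharp\, F_{k-1}}$ a normalized generating function quadratic at infinity for the composition, and Viterbo--Th\'eret uniqueness together with the equivalence property of Proposition \ref{proposition: properties spectral selectors fqi} lets you compute $c_\pm$ on this function. Your remark that the statement implicitly requires $k$ odd (the compactified function is only quadratic at infinity in that case, and an even sequence can be reduced by appending $\id$) matches how the paper uses the lemma.
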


It is proved in \cite{Viterbo} that
$c_- (\varphi) \leq 0 \leq c_+ (\varphi)$,
with equality if and only if 
$\varphi$ is the identity,
and that $c_-(\varphi) = - c_+ (\varphi^{-1})$
(Poincar\'e duality).
We also have the following result,
which will be needed later.

\begin{lemma}\label{lemma: bound symplectic}
For any compactly supported Hamiltonian diffeomorphism
$\varphi$ of $(\mathbb{R}^{2n}, \omega_0)$,
the sets
$\{\, c_+ (\varphi^m) \,,\, m \in \mathbb{Z}_{>0} \,\}$
and $\{\, \lvert\, c_- (\varphi^m) \,\lvert \,,\, m \in \mathbb{Z}_{>0} \,\}$
are bounded from above.
\end{lemma}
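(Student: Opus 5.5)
We need to prove Lemma \ref{lemma: bound symplectic}: $c_+(\varphi^m)$ is bounded above in $m$, and $|c_-(\varphi^m)|$ is bounded.

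The plan is to dominate the iterates $\varphi^m$ by a single Hamiltonian diffeomorphism that does not depend on $m$, and then use monotonicity. Fix a compactly supported Hamiltonian $H_t$ generating $\varphi$, and choose a ball $B(R)$ containing its support. Next choose a compactly supported autonomous non-negative Hamiltonian $K=K(|z|^2)$ with the following properties:
\begin{itemize}
\item $K$ is radial, constant equal to $C$ on $B(R)$, and supported in a larger ball $B(R')$;
\item $C\ge \sup|H|$;
\item $K$ is non-increasing in $r=|z|^2$, with slope $|K'|$ small enough that the flow $\psi^s$ of $K$ has no non-trivial periodic orbits of period at most $1$ in the annulus (if necessary, take $K$ to be a function of $r$ only, as in Viterbo's computation).
\end{itemize}

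First I will show $\varphi^m\le\psi_m$, where $\psi_m$ is the time-$m$ map of $K$. Since $\varphi^m$ is generated by the concatenated Hamiltonian $H^{(m)}_t=mH_{mt}$, the comparison $H^{(m)}\le mK$ holds pointwise on $B(R)$. However, that does not give a bound independent of $m$ by itself. Instead I will use a different reduction: there is a Hamiltonian $\mathbb{Z}$-independent bound by conjugation. The standard Viterbo argument runs as follows. Write $\varphi^m\le\psi^{(m)}$, where $\psi$ is a non-negative diffeomorphism with $\varphi\le\psi$ and $\psi$ generated by $K$. Then $\varphi^m\le\psi^m$ by bi-invariance of the order, since $\varphi\le\psi$ implies $\varphi^m\le\psi^m$ inductively via $\varphi^{m}=\varphi\varphi^{m-1}\le\varphi\psi^{m-1}\le\psi\psi^{m-1}$. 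Monotonicity of $c_+$ (Proposition \ref{proposition: monotonicity} combined with Proposition \ref{proposition: properties spectral selectors fqi}) then gives $c_+(\varphi^m)\le c_+(\psi^m)$.

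The main step, and the main obstacle, is to show that $c_+(\psi^m)$ is bounded independently of $m$ for an appropriately chosen radial $K$. I would compute the spectrum of $\psi^m$ explicitly. Its fixed points are the points outside the support, with action $0$, together with points on spheres $|z|^2=r$ with $mK'(r)\in\pi\mathbb{Z}$ (up to the normalization conventions), with action $m\big(K(r)-rK'(r)\big)$. Here I would instead choose $K$ so that the flow of $K$ on $B(R)$ is the identity: taking $K\equiv C$ there, the actions on $B(R)$ equal $mC$, which is not bounded. So a direct domination is wrong.

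The correct route is Viterbo's capacity argument: $c_+(\varphi^m)\le c(\mathrm{supp})$, meaning $c_+$ of any diffeomorphism supported in a ball $U$ is at most the capacity $c(U)$. To prove this I would use the energy–capacity inequality $c_+(\varphi)\le\gamma$-type bounds. Concretely, take a diffeomorphism $\rho$ displacing $U$, note that for $\chi$ supported in $U$, $\rho\chi$ has the same fixed points as $\rho$, and run a continuation (Lemma \ref{lemma: continuation}) along $s\mapsto\rho\chi^{s}$. This gives that $c_+(\rho\chi)$ is constant. Combining with the triangle inequality $c_+(\chi)\le c_+(\rho\chi)+c_+(\rho^{-1})$ yields $c_+(\chi)\le c_+(\rho)+c_+(\rho^{-1})$, independent of $\chi$. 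Applying this to $\chi=\varphi^m$, all of which are supported in $U$, bounds $c_+(\varphi^m)$. For $c_-$ I will use $c_-(\varphi^m)=-c_+(\varphi^{-m})$ and apply the same bound to $\varphi^{-1}$; together with $c_-\le0$ this bounds $|c_-(\varphi^m)|$. The triangle inequality is the one ingredient not yet stated in the paper, and I expect to import it from \cite{Viterbo}.
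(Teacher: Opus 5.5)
Your final argument is correct and is essentially the paper's: you reduce the $c_-$ bound to $c_+\big((\varphi^{-1})^m\big)$ via Poincar\'e duality, and bound $c_+$ of any diffeomorphism supported in $\mathcal{U}$ by $c_+(\rho)+c_+(\rho^{-1}) = c_+(\rho)-c_-(\rho)$ for a $\rho$ displacing $\mathcal{U}$. That bound is exactly Viterbo's Proposition 4.12, which the paper cites instead of reproving; your continuation-plus-triangle-inequality sketch is its standard proof, but note that Lemma \ref{lemma: continuation} needs the actions of $\rho\chi^s$ (not just its fixed points) to stay constant, which holds because those fixed points lie outside $\mathcal{U}$, where the isotopy $\chi^s$ and its Hamiltonian vanish. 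The radial-domination discussion at the start is, as you observe yourself, a dead end and should simply be deleted.
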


\begin{proof}
By Poincar\'e duality,
it is enough to prove that the set
$\{\, c_+ (\varphi^m) \,,\, m \in \mathbb{Z} \smallsetminus \{0\} \,\}$
is bounded from above.
Let $\mathcal{U}$ be an open bounded subset of $\mathbb{R}^{2n}$
containing the support of $\varphi$,
and $\psi$ a compactly supported Hamiltonian diffeomorphism
with $\psi(\mathcal{U}) \cap \mathcal{U} = \emptyset$.
Since, for every $m$,
$\varphi^m$ is supported in $\mathcal{U}$,
by  \cite[Proposition 4.12]{Viterbo}
(or \cite[Lemma 2.13]{San11}) we have
$$
c_+ (\varphi^m) \leq c_+ (\psi) - c_- (\psi) \,.
$$
Since the right hand side is independent of $m$,
we obtain the result.
\end{proof}

Similarly,
for a compactly supported contactomorphism $\phi$
of $(\mathbb{R}^{2n} \times S^1, \xi_0)$
contact isotopic to the identity
we define
\[
c_- (\phi) = c ( 1_{S^{2n} \times S^1} \,,\, \overline{F} )
\quad \text{ and } \quad
c_+ (\phi) = c ( \mu_{S^{2n} \times S^1} \,,\, \overline{F} ) \,,
\]
where $\overline{F}: S^{2n} \times S^1 \times \mathbb{R}^N \rightarrow \mathbb{R}$
is any generating function quadratic at infinity for $\phi$.
Since the set of critical values of $\overline{F}$
is equal to the action spectrum $\mathcal{A}_{\phi}$,
we have $c_{\pm} (\phi) \in \mathcal{A}_{\phi}$.
Since moreover all the generating functions
quadratic at infinity for $\phi$
are equivalent \cite{Viterbo, Theret},
by Proposition \ref{proposition: properties spectral selectors fqi}
(\ref{properties spectral sel gf: equivalence})
the numbers $c_{\pm} (\phi)$
do not depend on the choice
of generating function quadratic at infinity for $\phi$.
It is proved in \cite{San11, San10}
that $c_- (\phi) \leq 0 \leq c_+ (\phi)$,
with equality if and only if $\phi$ is the identity,
and that $c_-(\phi) = - c_+ (\phi^{-1})$.

We now define spectral selectors
detecting flat translated chains
of a sequence $\bm{\phi} = (\phi_0, \cdots, \phi_{k-1})$
of compactly supported contactomorphisms
of $(\mathbb{R}^{2n} \times S^1, \xi_0)$
contact isotopic to the identity.
Suppose first that $k$ is odd.
Let $F_j: \mathbb{R}^{2n+1}\times \mathbb{R}^{N_j} \rightarrow \mathbb{R}$
be special generating functions quadratic at infinity
for the $\phi_j$'s,
and consider the function
\[
\overline{F_0 \,\sharp\, \cdots \,\sharp\, F_{k-1}}:
S^{2n} \times \mathbb{T}^k \times \mathbb{R}^{2n(k-1)} \times \mathbb{R}^N
\rightarrow \mathbb{R}
\]
quadratic at infinity over $S^{2n} \times \mathbb{T}^k$,
where $N = N_0 + \cdots + N_{k-1}$,
constructed in Section \ref{section: functions translated chains}.
We define
$$
c_- (\bm{\phi}) = c (1_{S^{2n} \times \mathbb{T}^k} \,,\, \overline{F_0 \,\sharp\, \cdots \,\sharp\, F_{k-1}})
\quad \text{ and } \quad
c_+ (\bm{\phi}) = c (\mu_{S^{2n} \times \mathbb{T}^k} \,,\,
\overline{F_0 \,\sharp\, \cdots \,\sharp\, F_{k-1}})\,.
$$
In particular,
for $k = 1$ this definition reduces to the one
of the spectral selectors $c_{\pm} (\phi)$
of a contactomorphism $\phi$.
By Proposition \ref{proposition: stabilization etc contact}
and Proposition \ref{proposition: properties spectral selectors fqi}
(\ref{properties spectral sel gf: equivalence})
the numbers $c_{\pm} (\bm{\phi})$ do not depend on the choice
of special generating functions quadratic at infinity
for the $\phi_j$'s.
For a sequence $\bm{\phi} = (\phi_0, \cdots, \phi_{k-1})$
with $k$ even,
we define $c_{\pm} (\bm{\phi}) = c_{\pm} (\bm{\phi}, \id)$.

We now prove the properties of the spectral selectors $c_{\pm}$
listed in Proposition \ref{proposition: properties spectral selectors intro}
and Lemma \ref{lemma: bound contact}.

Spectrality,
Proposition \ref{proposition: properties spectral selectors intro}
(\ref{properties spectral sel intro: spectrality}),
follows for $k$ odd
from Proposition \ref{proposition: quadratic at infinity flat chains}
and Proposition \ref{proposition: properties spectral selectors fqi}
(\ref{properties spectral sel gf: spectrality}).
For $k$ even,
it follows from the odd case
and the fact that
$\mathcal{A}_{(\bm{\phi}, \id)} = \mathcal{A}_{\bm{\phi}}$.

Monotonicity,
Proposition \ref{proposition: properties spectral selectors intro}
(\ref{properties spectral sel intro: monotonicity}),
follows from Proposition \ref{proposition: monotonicity}
and Proposition \ref{proposition: properties spectral selectors fqi}
(\ref{properties spectral sel gf: monotonicity}).
Indeed,
let $\bm{\phi}$ and $\bm{\psi}$ be sequences
with $\bm{\phi} \leq \bm{\psi}$.
By Proposition \ref{proposition: monotonicity},
there are special generating functions quadratic at infinity
$F_j$ and $G_j$ for the $\phi_j$'s and the $\psi_j$'s respectively
such that $F_j \leq G_j$.
We then have
$\overline{F_0 \,\sharp\, \cdots \,\sharp\, F_{k-1}}
\leq \overline{G_0 \,\sharp\, \cdots \,\sharp\, G_{k-1}}$,
and so by Proposition \ref{proposition: properties spectral selectors fqi}
(\ref{properties spectral sel intro: monotonicity})
we conclude that $c_{\pm} (\bm{\phi}) \leq c_{\pm} (\bm{\psi})$.

In order to prove non-degeneracy,
Proposition \ref{proposition: properties spectral selectors intro}
(\ref{properties spectral sel intro: non-degeneracy}),
we need the following lemma.
Recall that a sequence $\bm{\phi} = (\phi_0, \cdots, \phi_{k-1})$
is said to be dynamically trivial
if all the $\phi_j$'s are strict contactomorphisms
and $\phi_{k-1} \circ \cdots \circ \phi_0 = \id$.

\begin{lemma}\label{lemma: dynamically trivial}
A sequence $\bm{\phi} = (\phi_0, \cdots, \phi_{k-1})$
of contactomorphisms of $(\mathbb{R}^{2n+1}, \xi_0)$
is dynamically trivial
if and only if for every $(z_0, \theta_0) \in \mathbb{R}^{2n+1}$
the $k$-tuple
\[
\big( (z_0, \theta_0) \,,\, \phi_0 \, (z_0, \theta_0) \,,\,\cdots \,,\,
\phi_{k-2} \circ \cdots \circ \phi_0 \, (z_0, \theta_0) \big)
\]
is a flat translated chain of $\bm{\phi}$
of action zero.
Moreover, if $\bm{\phi}$ is dynamically trivial
then all the translated chains of $\bm{\phi}$
have action zero.
\end{lemma}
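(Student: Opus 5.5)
We prove the two implications separately and then the statement on actions, working directly from the definitions of translated chain, flat translated chain, conformal factor and action. Write $q_0 = (z_0,\theta_0)$ and $q_{j+1} = \phi_j(q_j)$ for $j = 0, \cdots, k-1$, so that $q_k = \phi_{k-1}\circ\cdots\circ\phi_0\,(q_0)$. The $k$-tuple in the statement is $(q_0, \cdots, q_{k-1})$.

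\emph{Dynamically trivial implies the chain property.} Each $\phi_j$ is strict, so $g_j \equiv 0$, and flatness holds at every node. For $j \le k-2$ we have $q_{j+1} = \varphi^{\alpha_0}_{0}\circ\phi_j(q_j)$, so the time-shift is $t_j = 0$. For $j = k-1$, the identity $\phi_{k-1}\circ\cdots\circ\phi_0 = \id$ gives $\phi_{k-1}(q_{k-1}) = q_0$, so again $t_{k-1}=0$. The time-shifts are unique because the Reeb flow of $\alpha_0$ (translation in $\theta$) has no periodic orbits on $\mathbb{R}^{2n+1}$. Hence the action $\sum t_j$ equals $0$.

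\emph{Chain property implies dynamically trivial.} Flatness of the chain starting at an arbitrary $q_0$ gives $g_0(q_0) = 0$ for all $q_0$, so $\phi_0$ is strict. Since $\phi_{j-1}\circ\cdots\circ\phi_0$ is a diffeomorphism, $q_j$ also ranges over all of $\mathbb{R}^{2n+1}$, and flatness at the $j$-th node gives $g_j \equiv 0$ for every $j$.

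Next we use the chain condition at the last node. It says $q_0 = \varphi^{\alpha_0}_{-t_{k-1}}(q_k)$, and the action-zero hypothesis says $\sum t_j = 0$. The shifts $t_0, \cdots, t_{k-2}$ vanish by uniqueness, since $q_{j+1} = \phi_j(q_j)$ exactly. Therefore $t_{k-1} = 0$ and $q_k = q_0$. As $q_0$ was arbitrary, $\phi_{k-1}\circ\cdots\circ\phi_0 = \id$.

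\emph{All translated chains of a dynamically trivial $\bm{\phi}$ have action zero.} Let $(p_0,\cdots,p_{k-1})$ be any translated chain with time-shifts $t_j$. Strict contactomorphisms preserve $\alpha_0$, hence preserve its Reeb vector field, so each $\phi_j$ commutes with the Reeb flow $\varphi^{\alpha_0}_t$. Unwinding the relations $p_{j+1} = \varphi^{\alpha_0}_{-t_j}\circ\phi_j(p_j)$ around the cycle and commuting all Reeb flows to the left gives
\[
p_0 = \varphi^{\alpha_0}_{-(t_0+\cdots+t_{k-1})}\circ\phi_{k-1}\circ\cdots\circ\phi_0\,(p_0) = \varphi^{\alpha_0}_{-\sum t_j}(p_0).
\]
Since the Reeb flow on $\mathbb{R}^{2n+1}$ is $\theta\mapsto\theta+t$ and has no periodic points, we conclude $\sum t_j = 0$.

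The only point requiring care is this commutation of strict contactomorphisms with the Reeb flow. It follows from $\phi^\ast\alpha_0 = \alpha_0$: this identity forces $\phi_\ast R_{\alpha_0} = R_{\alpha_0}$ by the defining equations $\alpha_0(R)=1$ and $\iota_R d\alpha_0=0$.
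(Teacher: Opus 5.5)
Your proof is correct and follows the same route as the paper. The paper calls the equivalence immediate from the definitions and gets the statement on actions by commuting the strict contactomorphisms $\phi_j$ with the Reeb flow, exactly as you do; you have written out the details it leaves implicit, including the uniqueness of the time-shifts, which comes from the absence of periodic Reeb orbits on $\mathbb{R}^{2n+1}$.
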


\begin{proof}
The first statement is immediate.
The second can be easily seen
using that strict contactomorphisms
commute with the Reeb flow.
\end{proof}

We can now prove
Proposition \ref{proposition: properties spectral selectors intro}
(\ref{properties spectral sel intro: non-degeneracy}),
whose statement is recalled below.

\begin{prop}\label{lemma: non-degeneracy contact}
A sequence $\bm{\phi}$ of compactly supported contactomorphisms
of $(\mathbb{R}^{2n} \times S^1, \xi_0)$ is dynamically trivial
if and only if $c_- (\bm{\phi}) = c_+ (\bm{\phi}) = 0$.
\end{prop}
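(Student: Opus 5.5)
We first reduce to the case $k$ odd. If $k$ is even then by definition $c_{\pm}(\bm{\phi}) = c_{\pm}(\bm{\phi}, \id)$. Moreover, $\bm{\phi}$ is dynamically trivial if and only if $(\bm{\phi}, \id)$ is, since $\id$ is strict and appending it does not change the composition. So from now on we assume $k$ odd, and we work with $\bm{\phi}_{\mathbb{R}}$ on $\mathbb{R}^{2n+1}$. Since each $\phi_j$ is compactly supported, $\bm{\phi}$ is dynamically trivial if and only if $\bm{\phi}_{\mathbb{R}}$ is.

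\emph{Dynamically trivial implies $c_{\pm} = 0$.} By Lemma \ref{lemma: dynamically trivial}, every flat translated chain of $\bm{\phi}_{\mathbb{R}}$ has action zero. Hence $\mathcal{A}_{\bm{\phi}} = \{0\}$; it is non-empty because of the chains described in that lemma. Spectrality, Proposition \ref{proposition: properties spectral selectors intro} (\ref{properties spectral sel intro: spectrality}), then forces $c_-(\bm{\phi}) = c_+(\bm{\phi}) = 0$.

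\emph{$c_{\pm} = 0$ implies dynamically trivial.} Assume $c_-(\bm{\phi}) = c_+(\bm{\phi}) = 0$. By Proposition \ref{proposition: properties spectral selectors fqi} (\ref{properties spectral sel gf: non-degeneracy}), applied to $\overline{F_0 \,\sharp\, \cdots \,\sharp\, F_{k-1}}$ over the base $S^{2n} \times \mathbb{T}^k$, every base point carries a critical point of value $0$. In the coordinates given by the change of variables $B$, the base coordinates are $(z_0, \theta_0, \theta_1, \cdots, \theta_{k-1})$. Combining this with Proposition \ref{proposition: composition formula contact flat} (after lifting to $\mathbb{R}^{2n+1}$ and discarding $p_\infty$), we obtain the following statement: for every $(z_0, \theta_0) \in \mathbb{R}^{2n+1}$ and every choice of $\theta_1, \cdots, \theta_{k-1} \in \mathbb{R}$, there is a flat translated chain $\big((z_0,\theta_0), (z_1,\theta_1), \cdots, (z_{k-1},\theta_{k-1})\big)$ of $\bm{\phi}_{\mathbb{R}}$ with action $\sum t_j = 0$. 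This freedom in the $\theta_j$ is what we exploit.

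The relation $p_1 = \varphi^{\alpha_0}_{-t_0} \circ \phi_0(p_0)$ gives $z_1 = (\phi_0)_z(p_0)$, because the Reeb flow only translates $\theta$. Thus $z_1$ is determined by $p_0$, while $\theta_1$ is arbitrary. Flatness then gives $g_0(p_0) = 0$ for all $p_0$, and $g_1 = 0$ on the entire Reeb line through $\phi_0(p_0)$. These lines cover $\mathbb{R}^{2n+1}$ as $p_0$ varies, so $g_0 \equiv g_1 \equiv 0$. We then induct on $j$. Once $\phi_0, \cdots, \phi_{j-1}$ are known to be strict, they commute with the Reeb flow. Hence $z_j$ is the $z$-component of $\phi_{j-1} \circ \cdots \circ \phi_0(p_0)$, independently of the free $\theta$'s, and $\theta_j$ is still arbitrary. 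The same argument then gives $g_j \equiv 0$. So all the $\phi_j$'s are strict. Finally, since strict maps commute with the Reeb flow, closing the chain gives
\[
p_0 = \varphi^{\alpha_0}_{-\sum t_j} \circ \phi_{k-1} \circ \cdots \circ \phi_0 \,(p_0) = \phi_{k-1} \circ \cdots \circ \phi_0 \,(p_0)
\]
for every $p_0$, using $\sum t_j = 0$. Hence the composition is the identity, and $\bm{\phi}$ is dynamically trivial.

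The main obstacle is the bookkeeping in the step that reads off the surjectivity from the non-degeneracy property. One has to check that the base coordinates of the compactified function really are $(z_0, \theta_0, \cdots, \theta_{k-1})$ with each $\theta_j$ genuinely free. One also has to pass correctly between the quotient $\mathbb{T}^k$ and the lift to $\mathbb{R}^k$, using the $\mathbb{Z}$-invariance \eqref{equation: action Z for F}, and ignore base points over $p_\infty$, which is harmless because the $\phi_j$'s are compactly supported. The inductive argument for strictness is then short.
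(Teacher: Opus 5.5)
Your proof is correct and follows the paper's. The forward direction uses spectrality. The converse uses the non-degeneracy property of Proposition \ref{proposition: properties spectral selectors fqi} over the base $S^{2n} \times \mathbb{T}^k$, together with Proposition \ref{proposition: composition formula contact flat}. The only difference is the last step. The paper chooses $\theta_j = \big((\phi_{j-1})_{\mathbb{R}} \circ \cdots \circ (\phi_0)_{\mathbb{R}}\big)_{\theta}(z_0, \theta_0)$, which forces all time-shifts to vanish, so the chain is exactly the orbit chain and Lemma \ref{lemma: dynamically trivial} applies directly. You instead let the $\theta_j$'s vary freely and run an induction using that strict contactomorphisms commute with the Reeb flow; this works equally well but is slightly longer.
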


\begin{proof}
If $\bm{\phi}$ is dynamically trivial
then $\bm{\phi}_{\mathbb{R}}$ is also dynamically trivial,
and so, by Lemma \ref{lemma: dynamically trivial},
$\mathcal{A}_{\bm{\phi}} = \mathcal{A}_{\bm{\phi}_{\mathbb{R}}} = \{0\}$.
By spectrality we thus conclude that
$c_- (\bm{\phi}) = c_+ (\bm{\phi}) = 0$.
Conversely,
by Proposition \ref{proposition: properties spectral selectors fqi}
(\ref{properties spectral sel gf: non-degeneracy})
we have that $c_- (\bm{\phi}) = c_+ (\bm{\phi}) = 0$
if and only if the restriction of the projection
$$
S^{2n} \times \mathbb{T}^k \times \mathbb{R}^{2n (k-1)} \times \mathbb{R}^N
\rightarrow S^{2n} \times \mathbb{T}^k
$$
to the set of critical points
of $\overline{F_0 \,\sharp\,\cdots\,\sharp\,F_{k-1}}$
of critical value zero
is surjective,
and thus if and only if the restriction of the projection
\begin{equation}\label{equation: projection}
\mathbb{R}^{(2n+1)k} \times \mathbb{R}^N
\rightarrow \mathbb{R}^{2n} \times \mathbb{R}^k \,,
\end{equation}
\[
(z_0, \theta_0, \cdots, z_{k-1}, \theta_{k-1},
\zeta_0, \cdots, \zeta_{k-1})
\rightarrow (z_0, \theta_0, \cdots, \theta_{k-1})
\]
to the set of critical points of $F_0 \,\sharp\,\cdots\,\sharp\,F_{k-1}$
of critical value zero is surjective.
Suppose that this is the case.
Then for any $(z_0, \theta_0) \in \mathbb{R}^{2n+1}$,
posing $\theta_1 = ((\phi_0)_{\mathbb{R}})_{\theta} \, (z_0, \theta_0)$,
$\cdots$,
$\theta_{k-1} =
((\phi_{k-2})_{\mathbb{R}} \circ \cdots \circ (\phi_0)_{\mathbb{R}})_{\theta} \, (z_0, \theta_0)$,
there are $z_1$, $\cdots$, $z_{k-1}$, $\zeta_0, \cdots, \zeta_{k-1}$
such that
$$
(z_0, \theta_0, \cdots, z_{k-1}, \theta_{k-1}, \zeta_0, \cdots, \zeta_{k-1})
$$
is a critical point of $F_0 \,\sharp\,\cdots\,\sharp\,F_{k-1}$
of critical value zero.
By Proposition \ref{proposition: composition formula contact flat},
$$
\big((z_0, \theta_0), \cdots, (z_{k-1}, \theta_{k-1})\big)
$$
is a flat translated chain of $\bm{\phi}_{\mathbb{R}}$
of action zero.
By our choice of $\theta_1, \cdots, \theta_{k-1}$,
this translated chain is equal to
\[
\big( (z_0, \theta_0) \,,\, \phi_0 \, (z_0, \theta_0) \,,\,\cdots \,,\,
\phi_{k-2} \circ \cdots \circ \phi_0 \, (z_0, \theta_0) \big) \,.
\]
By Lemma \ref{lemma: dynamically trivial},
we thus conclude that $\bm{\phi}_{\mathbb{R}}$
is dynamically trivial,
and so $\bm{\phi}$ is dynamically trivial.
\end{proof}

We  now prove Proposition \ref{proposition: properties spectral selectors intro}
(\ref{properties spectral sel intro: relation symplectic}),
whose statement is recalled below.

\begin{prop}\label{proposition: relation with symplectic}
For any sequence
$\bm{\varphi} = (\varphi_0, \cdots, \varphi_{k-1})$
of compactly supported Hamiltonian diffeomorphisms
of $(\mathbb{R}^{2n}, \omega_0)$,
we have
$$
c_{\pm} (\widetilde{\bm{\varphi}})
= c_{\pm} (\varphi_{k-1} \circ \cdots \circ \varphi_0) \,.
$$
\end{prop}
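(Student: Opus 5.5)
First reduce to $k$ odd. If $k$ is even, by definition $c_{\pm}(\widetilde{\bm{\varphi}}) = c_{\pm}(\widetilde{\bm{\varphi}}, \id)$. The lift of $\id$ is $\id = \widetilde{\id}$, so $(\widetilde{\bm{\varphi}}, \id)$ is the lift of the odd-length sequence $(\bm{\varphi}, \id)$. The corresponding composition is $\id \circ \varphi_{k-1} \circ \cdots \circ \varphi_0 = \varphi_{k-1} \circ \cdots \circ \varphi_0$. So it suffices to treat $k$ odd.

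For $k$ odd, choose special generating functions quadratic at infinity $f_j: \mathbb{R}^{2n} \times \mathbb{R}^{N_j} \to \mathbb{R}$ for the $\varphi_j$'s. By Lemma \ref{lemma: gf lift to contact}, $F_j(z,\theta,\zeta) = f_j(z,\zeta)$ are special generating functions quadratic at infinity for the lifts $\widetilde{\varphi_j}$. The functions $F_j$ do not depend on $\theta$. Inspecting the formula for $(F_0 \,\sharp\, \cdots \,\sharp\, F_{k-1}) \circ B^{-1}$, we see that it equals $(f_0 \,\sharp\, \cdots \,\sharp\, f_{k-1}) \circ A^{-1}$ evaluated at $(z_0; z_1, \dots, z_{k-1}, \zeta_0, \dots, \zeta_{k-1})$, independently of $(\theta_0, \dots, \theta_{k-1})$. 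The same holds for the values at $p_\infty$. Hence, after the obvious reordering of fibre coordinates, $\overline{F_0 \,\sharp\, \cdots \,\sharp\, F_{k-1}}$ on $S^{2n} \times \mathbb{T}^k \times \mathbb{R}^{2n(k-1)} \times \mathbb{R}^N$ is the pullback of $\overline{f_0 \,\sharp\, \cdots \,\sharp\, f_{k-1}}$ on $S^{2n} \times \mathbb{R}^{2n(k-1)} \times \mathbb{R}^N$ under the projection forgetting the $\mathbb{T}^k$ factor. This pullback is a fibre-preserving identification over $S^{2n} \times \mathbb{T}^k$. The quadratic forms at infinity also match: $Q$ in Proposition \ref{proposition: quadratic at infinity flat chains} is the pullback of the $Q$ in Proposition \ref{proposition: gfqi k}.

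Now apply Lemma \ref{lemma: product} with $B_1 = S^{2n}$ and $B_2 = \mathbb{T}^k$. It gives
$c(1_{S^{2n}\times\mathbb{T}^k}, \overline{F_0 \,\sharp\, \cdots \,\sharp\, F_{k-1}}) = c(1_{S^{2n}}, \overline{f_0 \,\sharp\, \cdots \,\sharp\, f_{k-1}})$,
and likewise with $\mu$ in place of $1$. By Lemma \ref{lemma: spectral selectors iteration}, the right-hand sides are $c_{\mp}(\varphi_{k-1}\circ\cdots\circ\varphi_0)$ respectively, which concludes the proof.

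The only point requiring care is the identification in the second paragraph. One must check that the compactification at $p_\infty$ and the descent to $\mathbb{T}^k$ are compatible with the pullback. Both are immediate because nothing depends on the $\theta_j$'s. Also, the ordering of base and fibre variables differs between the two constructions, but only by a fibre-preserving permutation of coordinates, so it is harmless.
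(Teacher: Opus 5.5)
Your proof is correct and is essentially the paper's argument. With $F_j(z,\theta,\zeta)=f_j(z,\zeta)$, the function $\overline{F_0\,\sharp\cdots\sharp\,F_{k-1}}$ is the pullback of $\overline{f_0\,\sharp\cdots\sharp\,f_{k-1}}$ under the projection forgetting the $\mathbb{T}^k$ coordinates, and Lemma~\ref{lemma: product} together with Lemma~\ref{lemma: spectral selectors iteration} concludes. Your explicit reduction of the even case via $(\widetilde{\bm{\varphi}},\id)=\widetilde{(\bm{\varphi},\id)}$ is a correct detail that the paper leaves implicit; it is needed because Lemma~\ref{lemma: spectral selectors iteration} rests on Proposition~\ref{proposition: gfqi k}, which requires $k$ odd.
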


\begin{proof}
Let $f_j: \mathbb{R}^{2n} \times \mathbb{R}^{N_j} \rightarrow \mathbb{R}$
be special generating functions quadratic at infinity
for the $\varphi_j$'s.
By Lemma \ref{lemma: gf lift to contact},
the functions $F_j: \mathbb{R}^{2n+1} \times \mathbb{R}^{N_j}
\rightarrow \mathbb{R}$
defined by
\[
F_j (z, \theta, \zeta) = f_j (z, \zeta)
\]
are special generating functions quadratic at infinity
for the $\widetilde{\varphi_j}'s$.
We have
$$
\overline{F_0 \,\sharp\, \cdots \,\sharp\, F_{k-1}}
= \overline{f_0 \,\sharp\, \cdots \,\sharp\, f_{k-1}} \circ \pr \,,
$$
where
\[
\pr: S^{2n} \times \mathbb{T}^k \times \mathbb{R}^{2n(k-1)} \times \mathbb{R}^{N}
\rightarrow S^{2n} \times \mathbb{R}^{2n(k-1)} \times \mathbb{R}^{N}
\]
is the projection that forgets the coordinates
$(\theta_0, \cdots, \theta_{k-1}) \in \mathbb{T}^k$.
By Lemma \ref{lemma: product}
and Lemma \ref{lemma: spectral selectors iteration}
we thus have
\[
c_- (\widetilde{\bm{\varphi}})
= c (1_{S^{2n} \times \mathbb{T}^k} \,,\, \overline{F_0 \,\sharp\, \cdots \,\sharp\, F_{k-1}})
= c (1_{S^{2n}} \,,\, \overline{f_0 \,\sharp\, \cdots \,\sharp\, f_{k-1}})
= c_- \, (\varphi_{k-1} \circ \cdots \circ \varphi_0)
\]
and
\[
c_+ (\widetilde{\bm{\varphi}})
= c (\mu_{S^{2n} \times \mathbb{T}^k} \,,\, \overline{F_0 \,\sharp\, \cdots \,\sharp\, F_{k-1}})
= c (\mu_{S^{2n}} \,,\, \overline{f_0 \,\sharp\, \cdots \,\sharp\, f_{k-1}})
= c_+ \, (\varphi_{k-1} \circ \cdots \circ \varphi_0) \,.
\qedhere
\]
\end{proof}

Proposition \ref{proposition: properties spectral selectors intro}
(\ref{properties spectral sel intro: monotonicity})
and (\ref{properties spectral sel intro: relation symplectic}) 
imply Proposition \ref{proposition: properties spectral selectors intro}
(\ref{properties spectral sel intro: sign}):
for any sequence $\bm{\phi}$,
we have $c_-(\bm{\phi}) \leq 0$ and $c_+(\bm{\phi}) \geq 0$.  
Indeed,
let $\bm{\varphi}_1$ and $\bm{\varphi}_2$
be sequences of compactly supported Hamiltonian diffeomorphisms
of $(\mathbb{R}^{2n}, \omega_0)$
such that $\widetilde{\bm{\varphi}_1} \leq \bm{\phi} \leq \widetilde{\bm{\varphi}_2}$.
By Proposition \ref{proposition: properties spectral selectors intro}
(\ref{properties spectral sel intro: monotonicity})
and (\ref{properties spectral sel intro: relation symplectic}),
we have
\[
c_- (\bm{\phi}) \leq c_- (\widetilde{\bm{\varphi}_2})
= c_- \big( (\varphi_2)_{k-1} \circ \cdots \circ (\varphi_2)_0 \big) \leq 0
\]
and
\[
c_+ (\bm{\phi}) \geq c_+ (\widetilde{\bm{\varphi}_1})
= c_+ \big( (\varphi_1)_{k-1} \circ \cdots \circ (\varphi_1)_0 \big) \geq 0 \,.
\]

Similarly,
Proposition \ref{proposition: properties spectral selectors intro}
(\ref{properties spectral sel intro: monotonicity})
and (\ref{properties spectral sel intro: relation symplectic})
and Lemma \ref{lemma: bound symplectic}
imply that, for any sequence $\bm{\phi}$,
the sets $\{\, c_+ (\bm{\phi}^m) \,,\, m \in \mathbb{Z}_{>0} \,\}$
and $\{\, \lvert \, c_- (\bm{\phi}^m) \,\lvert \,,\, m \in \mathbb{Z}_{>0} \,\}$
are bounded from above
(Lemma \ref{lemma: bound contact}).
Indeed,
let $\bm{\varphi}_1$ and $\bm{\varphi}_2$ be sequences
of compactly supported Hamiltonian diffeomorphisms
of $(\mathbb{R}^{2n}, \omega_0)$ with
$\widetilde{\bm{\varphi}_1} \leq \bm{\phi} \leq \widetilde{\bm{\varphi}_2}$.
By Proposition \ref{proposition: properties spectral selectors intro}
(\ref{properties spectral sel intro: monotonicity})
and (\ref{properties spectral sel intro: relation symplectic})
we have
$$
c_+ (\bm{\phi}^m) \leq c_+ \big( (\widetilde{\bm{\varphi}_2})^m\big)
= c_+ \Big( \big( (\varphi_2)_{k-1} \circ \cdots \circ (\varphi_2)_0 \big)^m \Big)
$$
and
$$
c_- (\bm{\phi}^m) \geq c_- \big(\widetilde{\bm{\varphi}_1})^m\big)
= c_- \Big( \big( (\varphi_1)_{k-1} \circ \cdots \circ (\varphi_1)_0 \big)^m \Big) \,,
$$
and so the required bounds follow
from Lemma \ref{lemma: bound symplectic}.

Finally we prove stabilization,
Proposition \ref{proposition: properties spectral selectors intro}
(\ref{properties spectral sel intro: decreasing sequence}),
whose statement is recalled below.

\begin{prop}\label{proposition: decreasing sequence}
For any sequence $\bm{\phi}$ of compactly supported contactomorphisms
of $(\mathbb{R}^{2n} \times S^1, \xi_0)$
contact isotopic to the identity,
we have
\[
c_{\pm} (\bm{\phi}) = c_{\pm} (\bm{\phi}, \id) \,.
\]
\end{prop}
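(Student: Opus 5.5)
Since $c_\pm$ of an even-length sequence is defined via appending $\id$, it suffices to treat $\bm{\phi} = (\phi_0, \cdots, \phi_{k-1})$ with $k$ odd. Then $(\bm{\phi}, \id)$ has even length and, by definition, $c_{\pm}(\bm{\phi},\id) = c_{\pm}(\bm{\phi},\id,\id)$, a sequence of odd length $k+2$. If $k$ is even, $c_\pm(\bm{\phi}) = c_\pm(\bm{\phi},\id)$ by definition and there is nothing to prove. So the statement to prove is
\[
c\big(u, \overline{F_0 \,\sharp\, \cdots \,\sharp\, F_{k-1} \,\sharp\, G \,\sharp\, G}\big)
= c\big(u, \overline{F_0 \,\sharp\, \cdots \,\sharp\, F_{k-1}}\big)
\]
for $u$ the unit or top class. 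Here $G$ is a special generating function quadratic at infinity for $\id$; we may take $G(z,\theta,\zeta) = Q(\zeta)$, a non-degenerate quadratic form, or even $G = 0$ with no fibre variable. Note that by Lemma \ref{lemma: useful} the function $G \equiv 0$ generates $\id$, since $\Lambda_{\id}$ is the zero section.

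With $G = 0$, the function $H := F_0 \,\sharp\, \cdots \,\sharp\, F_{k-1} \,\sharp\, 0 \,\sharp\, 0$ depends on the extra variables $(z_k,\theta_k,z_{k+1},\theta_{k+1})$ only through the symplectic terms $\frac12\langle i z_{k-1}, z_k\rangle + \frac12\langle i z_k, z_{k+1}\rangle + \frac12\langle i z_{k+1}, z_0\rangle$, together with the shift of the argument of $F_{k-1}$ from $\frac{z_{k-1}+z_0}{2}$ to $\frac{z_{k-1}+z_k}{2}$. The $\theta_k, \theta_{k+1}$ variables are dummy circle coordinates. My plan has four steps.

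\textbf{Step 1 (remove the dummy circles).} I would view $\overline{H}$ as the pullback of a function on $S^{2n}\times\mathbb{T}^k \times \mathbb{R}^{2n(k+1)}\times\mathbb{R}^N$ under the projection forgetting $(\theta_k,\theta_{k+1})$. Lemma \ref{lemma: product} then reduces the claim to comparing selectors over the same base $S^{2n}\times\mathbb{T}^k$.

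\textbf{Step 2 (deform away the shift).} I would deform the reduced function by a one-parameter family $H_s$ in which the argument of $F_{k-1}$ interpolates between $\frac{z_{k-1}+z_k}{2}$ and something that makes the $(z_k,z_{k+1})$ part decouple into a non-degenerate quadratic form. Alternatively, I would find an explicit fibre-preserving linear change of variables in $(z_k,z_{k+1})$, in the spirit of the changes $A$ and $B$, taking $H$ to $(F_0 \,\sharp\, \cdots \,\sharp\, F_{k-1}) \oplus q(z_k,z_{k+1})$. Concretely, I would try to complete the square: the critical-point equations in $z_k,z_{k+1}$ are linear once the $F_{k-1}$ term is frozen. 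The critical points of $H$ correspond exactly to flat translated chains $(\bm{p},p,p')$ of $(\bm{\phi},\id,\id)$. These are in bijection with flat chains $\bm{p}$ of $\bm{\phi}$, with $p = p' = \phi_{k-1}(p_{k-1})$ shifted, and the actions agree. This is the remark made after Remark \ref{remark: symmetric translated chains}.

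\textbf{Step 3 (apply continuation).} If an exact change of variables is not available, I would use Lemma \ref{lemma: continuation}. I would pick a path $H_s$ of functions quadratic at infinity, for instance inserting $\id$ generated by the family $G_s$ corresponding to a path of the form $(\id,\id)$ realized via a deformation of the composition formula. I would then check at each $s$ that the critical points are still in bijection with flat chains of $\bm{\phi}$ with the same actions, so that the critical value set is constant and equal to $\mathcal{A}_{\bm{\phi}}$. The endpoint is $(F_0 \,\sharp\, \cdots \,\sharp\, F_{k-1})\oplus q$, whose selectors equal those of $F_0 \,\sharp\, \cdots \,\sharp\, F_{k-1}$ by stabilization invariance (Proposition \ref{proposition: properties spectral selectors fqi} (\ref{properties spectral sel gf: equivalence})).

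\textbf{Main obstacle.} The hardest part is Step 3: constructing the interpolation while keeping quadratic-at-infinity behaviour uniform over the compactified base $S^{2n}$. The change of variables $B$ must be compatible with the new variables $z_k,z_{k+1}$, and the critical value set must stay constant for every $s$. I would first try the explicit linear change of variables, since the $\id$ pieces contribute only quadratic terms, and fall back on Lemma \ref{lemma: continuation} only if that fails.
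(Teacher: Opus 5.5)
\textbf{There is a gap.} Your reduction is the right one and matches the paper's. For $k$ odd you compare $\bm{\phi}$ with $(\bm{\phi},\id,\id)$, generate $\id$ by $0$, dispose of the dummy circles $\theta_k,\theta_{k+1}$ with Lemma~\ref{lemma: product} (a step the paper leaves implicit), and finish with stabilization invariance. But the step that carries the whole proof is missing.

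Your preferred route, a fibre-preserving \emph{linear} change of variables, cannot work for general $F_j$. In the coordinates of $B$, matching the arguments of $F_0,\dots,F_{k-2}$ forces $z_1,\dots,z_{k-1}$ to be fixed. Matching the argument of $F_{k-1}$, however, requires $z_{k-1}\mapsto z_{k-1}+z_k$. Equivalently, the unique linear map respecting all the midpoints $\frac{z_j+z_{j+1}}{2}$ sends the base variable $z_0$ to $\frac{z_0+z_k}{2}$, so it is not fibre preserving. The identity factors do not contribute only quadratic terms: they shift the argument of $F_{k-1}$, which is not quadratic.

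Your fallback via Lemma~\ref{lemma: continuation} is the correct strategy, but you give no actual path; a ``path of the form $(\id,\id)$'' is constant and deforms nothing. You also leave the constancy of the critical value set for all $s$ as an unresolved ``main obstacle''. Knowing that both endpoints have critical values $\mathcal{A}_{\bm{\phi}}$ is not enough, and that constancy is exactly the content of the proof.

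\textbf{How the paper closes it.} It takes $G_s$ equal to $(F_0\,\sharp\cdots\sharp\,F_{k-1}\,\sharp\,0\,\sharp\,0)\circ B^{-1}$, except that $F_{k-1}$ is evaluated at $z_0+\frac{z_{k-1}+sz_k}{2}$ and the coupling term is replaced by $\frac{s}{2}\langle iz_{k-1},z_k\rangle$. Then $G_0=\big((F_0\,\sharp\cdots\sharp\,F_{k-1})\circ B^{-1}\big)\oplus\frac12\langle iz_k,z_{k+1}\rangle$ is a stabilization, and $G_1$ is the $(k+2)$-fold function. At a critical point, $\partial_{z_{k+1}}G_s=\frac{i}{2}z_k$ gives $z_k=0$. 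Using $\partial_{z_0}G_s=0$, the combination $s(\partial_{z_1}+\partial_{z_3}+\dots+\partial_{z_{k-2}})G_s+\partial_{z_k}G_s=-\frac{i}{2}z_{k+1}$ gives $z_{k+1}=0$. Hence $p$ is critical for $G_s$ if and only if $p_s$ (with $z_k$ replaced by $sz_k$) is critical for $G_1$, and $G_s(p)=G_1(p_s)$. So all $G_s$ have the same critical values, and Lemma~\ref{lemma: continuation} applies.

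\textbf{An alternative that bypasses continuation.} Your ``complete the square'' idea does work if you allow a \emph{nonlinear} shift. Put $V=\frac12\int_0^1\nabla_z F_{k-1}\big(z_0+\frac{z_{k-1}+tz_k}{2},\theta_{k-1},\zeta_{k-1}\big)\,dt$, and let $\Psi$ replace $z_{k+1}$ by $z_{k+1}-z_{k-1}+2iV$, fixing all other variables. Then $\Psi$ satisfies $G_0\circ\Psi=G_1$, is fibre preserving, and commutes with the $\mathbb{Z}$-actions in the $\theta_j$. Because $F_{k-1}$ is special, $\Psi$ reduces to $z_{k+1}\mapsto z_{k+1}-z_{k-1}$ on a neighbourhood of every point over $p_\infty$. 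It therefore extends to the compactification and gives a direct equivalence between $\overline{G_1}$ and a stabilization.
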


\begin{proof}
It is enough to prove that 
\[
c_{\pm} (\bm{\phi}) = c_{\pm} (\bm{\phi}, \id, \id)
\]
for a sequence $\bm{\phi} = (\phi_0, \cdots, \phi_{k-1})$
with $k$ odd.
Let $F_j: \mathbb{R}^{2n+1} \times \mathbb{R}^{N_j} \rightarrow \mathbb{R}$
be special generating functions quadratic at infinity
for the $\phi_j$'s.
We have
\[
(F_0 \,\sharp\, \cdots \,\sharp\, F_{k-1} \,\sharp\, 0 \,\sharp\, 0) \circ B^{-1} \,
(z_0, \theta_0, \cdots, z_{k+1}, \theta_{k+1},
\zeta_0, \cdots, \zeta_{k-1})
\]
\[
= F_0 \Big( z_0 + \frac{z_1}{2} \,,\, \theta_0 \,,\, \zeta_0 \Big)
+ \sum_{j = 1}^{k-1}
F_j \Big( z_0 + \frac{z_j + z_{j+1}}{2} \,,\,
\theta_j \,,\, \zeta_j \Big)  
+ \sum_{j = 1}^{k} \frac{1}{2} \, \langle\, i \, z_j , z_{j+1} \,\rangle \,.
\]
Consider the $1$-parameter family of functions
$$
G_s: \mathbb{R}^{(2n+1)(k+2)} \times \mathbb{R}^{N_0 + \cdots + N_{k-1}}
\rightarrow \mathbb{R} \,,
$$
for $s \in [0,1]$,
defined by
\[
G_s (z_0, \theta_0, \cdots, z_{k+1}, \theta_{k+1},
\zeta_0, \cdots, \zeta_{k-1})
\]
\[
= F_0 \Big( z_0 + \frac{z_1}{2} \,,\, \theta_0 \,,\, \zeta_0 \Big)
+ \sum_{j = 1}^{k - 2}
F_j \Big( z_0 + \frac{z_j + z_{j+1}}{2} \,,\, \theta_j \,,\, \zeta_j \Big)
+ F_{k-1} \Big( z_0 + \frac{z_{k-1} + s z_{k}}{2} \,,\,
\theta_{k-1} \,,\, \zeta_{k-1} \Big)
\]
\[
+ \sum_{j = 1}^{k-2} \frac{1}{2} \, \langle\, i \, z_j , z_{j+1} \,\rangle 
+ \frac{s}{2} \, \langle\, i \, z_{k-1}, z_k \,\rangle
+ \frac{1}{2} \, \langle\, i \, z_k, z_{k+1} \,\rangle \,.
\]
Since $G_0$ is a stabilization of $F_0 \,\sharp \,\cdots \,\sharp\, F_{k-1}$
(i.e.\ the direct sum of $F_0 \,\sharp \,\cdots \,\sharp\, F_{k-1}$
with a non-degenerate quadratic form),
its set of critical values
is equal to the set of critical values of $F_0 \,\sharp \,\cdots \,\sharp\, F_{k-1}$,
which is equal to $\mathcal{A}_{\bm{\phi}}$.
On the other hand,
the set of critical values of
$G_1 = F_0 \,\sharp \,\cdots \,\sharp\, F_{k-1} \,\sharp\, 0 \,\sharp\, 0$
is equal to $\mathcal{A}_{(\bm{\phi}, \id, \id)}$,
which is also equal to $\mathcal{A}_{\bm{\phi}}$.
We claim that, in fact, all the functions $G_s$
have the same set of critical values.
Observe first that for any critical point
\[
p = (z_0, \theta_0, \cdots, z_{k+1}, \theta_{k+1},
\zeta_0, \cdots, \zeta_{k-1})
\]
of $G_s$ we have
\begin{equation}\label{equation: xy}
z_k = z_{k+1} = 0 \,.
\end{equation}
Indeed,
on the one hand we have
\[
0 = \frac{\partial G_s}{\partial z_{k+1}} \, (p)
= \frac{i}{2} \, z_k \,,
\]
thus $z_k = 0$.
On the other hand,
the vanishing of $z_{k+1}$
can be seen as follows.
Denote
\[
D_0 = d_z F_0 \, \Big( z_0 + \frac{z_1}{2} \,,\, \theta_0 \,,\, \zeta_0 \Big) \,,
\]
\[
D_j = d_z F_j \, \Big( z_0 + \frac{z_j + z_{j+1}}{2} \,,\, \theta_j \,,\, \zeta_j \Big)
\]
for $j = 1, \cdots, k-2$
and
\[
D_{k-1} = d_z F_{k-1} \, \Big( z_0 + \frac{z_{k-1} + s z_{k}}{2} \,,\,
\theta_{k-1} \,,\, \zeta_{k-1} \Big) \,.
\]
For $k = 3$,
the equalities
\[
0 = \frac{\partial G_s}{\partial z_0} \, (p) = D_0 + D_1 + D_2 \,,
\]
\[
0 = \frac{\partial G_s}{\partial z_1} \, (p)
= \frac{D_0}{2} + \frac{D_1}{2} - \frac{i}{2} \, z_2
\]
and
\[
0 = \frac{\partial G_s}{\partial z_3} \, (p)
= \frac{s D_2}{2} + \frac{i}{2} (s z_2 - z_4)
\]
imply that
\[
0 = s \, \frac{\partial G_s}{\partial z_1} \, (p)
+ \frac{\partial G_s}{\partial z_3} (p)
= - \frac{i}{2} \, z_4 \,,
\]
thus $z_4 = 0$.
For $k \geq 5$,
the equalities
\[
0 = \frac{\partial G_s}{\partial z_0} \, (p)
= D_0 + \cdots + D_{k-1} \,,
\]
\[
0 = \frac{\partial G_s}{\partial z_1} \, (p)
= \frac{D_0}{2} + \frac{D_1}{2} - \frac{i}{2} \, z_2 \,,
\]
\[
0 = \frac{\partial G_s}{\partial z_{2j-1}} \, (p)
= \frac{D_{2j-2}}{2}
+ \frac{D_{2j-1}}{2} - \frac{i}{2} (z_{2j} - z_{2j-2} )
\]
for $j = 2, \cdots, \frac{k-1}{2}$,
and
\[
0 = \frac{\partial G_s}{\partial z_{k}} (p)
= \frac{s D_{k-1}}{2} + \frac{i}{2} (s z_{k-1} - z_{k+1})
\]
imply that
\[
0 = s \; \frac{\partial G_s}{\partial z_1} (p)
+ s \; \frac{\partial G_s}{\partial z_3} (p)
+ \cdots + s \; \frac{\partial G_s}{\partial z_{k-2}} (p)
+ \frac{\partial G_s}{\partial z_{k}} (p)
= - \frac{i}{2} \, z_{k+1} \,,
\]
thus $z_{k+1} = 0$.

Denote now
\[
p_s = (z_0, \theta_0, \cdots, z_{k-1}, \theta_{k-1},
s z_k, \theta_k, z_{k+1}, \theta_{k+1},
\zeta_0, \cdots, \zeta_{k-1}) \,.
\]
Then
\begin{equation}\label{equation: G}
G_s (p) = G_1 (p_s) + \frac{1-s}{2} \, \langle\, i \, z_k \,,\, z_{k+1} \,\rangle \,.
\end{equation}
Using \eqref{equation: xy} we see that
\[
0 = \frac{\partial G_s}{\partial z_k} \, (p)
= s \; \frac{\partial G_1}{\partial z_k} \, (p_s) - \frac{i \, (1-s)}{2} \, z_{k+1}
= s \; \frac{\partial G_1}{\partial z_k} \, (p_s)
\]
and
\[
0 = \frac{\partial G_s}{\partial z_{k+1}} \, (p)
= \frac{\partial G_1}{\partial z_{k+1}} \, (p_s) + \frac{i\,(1-s)}{2} \, z_k
= \frac{\partial G_1}{\partial z_{k+1}} \, (p_s) \,.
\]
Thus (assuming that $s > 0$),
$\frac{\partial G_1}{\partial z_k} \, (p_s)
= \frac{\partial G_1}{\partial z_{k+1}} \, (p_s) = 0$.
Since all the other derivatives of $G_1$ at $p_s$
are equal to those of $G_s$ at $p$,
we conclude that $p_s$ is a critical point of $G_1$.
In fact, the same calculations show that $p$ is a critical point of $G_s$
if and only if $p_s$ is a critical point of $G_1$.
By \eqref{equation: xy} and \eqref{equation: G},
we then also have $G_s (p) = G_1 (p_s)$.
We thus conclude that all the functions $G_s$
have the same set of critical values,
as we claimed.

We now compactify the functions $G_s$
to functions quadratic at infinity
$$
\overline{G_s}: S^{2n} \times \mathbb{T}^k \times \mathbb{R}^{N_0 + \cdots + N_{k-1}}
\rightarrow \mathbb{R} \,.
$$
The functions $\overline{G_s}$ also have the same set of critical values.
By Proposition \ref{proposition: properties spectral selectors fqi}
(\ref{properties spectral sel gf: equivalence})
(using that $\overline{G_0}$ is a stabilization
of $\overline{F_0 \,\sharp\,\cdots \,\sharp\, F_{k-1}}$)
and Lemma \ref{lemma: continuation}
we thus have
\[
c_+ (\bm{\phi})
= c (\mu_{S^{2n} \times \mathbb{T}^k} \,,\,
\overline{F_0 \,\sharp\,\cdots \,\sharp\, F_{k-1}})
= c (\mu_{S^{2n} \times \mathbb{T}^k} \,,\, \overline{G_0})
\]
\[
= c (\mu_{S^{2n} \times \mathbb{T}^k} \,,\, \overline{G_1})
= c_+ (\bm{\phi}, \id, \id) \,.
\]
Similarly,
$c_- (\bm{\phi}) = c_- (\bm{\phi}, \id, \id)$.
\end{proof}


\section{Proof of Theorem \ref{theorem: main}}\label{section: proof}

We first remark that it is enough to prove Theorem \ref{theorem: main}
in the case of $(\mathbb{R}^{2n} \times S^1, \xi_0)$.
Indeed,
for any sequence $\bm{\phi} = (\phi_0, \cdots,\phi_{k-1})$
of compactly supported contactomorphisms of $(\mathbb{R}^{2n+1}, \xi_0)$
contact isotopic to the identity
there exists $a > 0$ such that each $\phi_j$
is the time-$1$ map of a contact isotopy
supported in $\R^{2n} \times (- \frac{a}{2} , \frac{a}{2})$.
We consider the contactomorphism $\nu$
of $(\mathbb{R}^{2n+1}, \xi_0)$
given by
\begin{align*}
\nu (z, \theta) = (\sqrt{a} \, z \,,\, a \, \theta) \,,
\end{align*}
and the sequence
\[
\bm{\phi}_a := (\nu^{-1} \circ \varphi_0 \circ \nu \,,\, \cdots \,,\,
\nu^{-1} \circ \varphi_{k-1} \circ \nu) \,.
\]
The periodic flat translated chains of $\bm{\phi}$
are in bijection with the periodic flat translated chains of $\bm{\phi}_a$.
Since each $\nu^{-1} \circ \varphi_j \circ \nu$
is the time-$1$ map of a contact isotopy
supported in $\mathbb{R}^{2n} \times (- \frac{1}{2}, \frac{1}{2})$,
we can see $\bm{\phi}_a$ as a sequence
of compactly supported contactomorphisms
of $(\mathbb{R}^{2n} \times S^1, \xi_0)$
contact isotopic to the identity.

By arguments parallel to those given in \cite{Viterbo}
to prove Theorem \ref{theorem: Viterbo},
we now use Lemma \ref{lemma: action relation for not geometrically distinct chains},
Proposition \ref{proposition: properties spectral selectors intro}
and Lemma \ref{lemma: bound contact}
to prove Theorem \ref{theorem: main}
for $(\mathbb{R}^{2n} \times S^1, \xi_0)$.
We first show that any non-trivial sequence
$\bm{\phi} = (\phi_0, \cdots, \phi_{k-1})$
of compactly supported contactomorphisms
of $(\mathbb{R}^{2n} \times S^1, \xi_0)$
contact isotopic to the identity
has infinitely many geometrically distinct
periodic flat translated chains
in the interior of the support.
As remarked in the introduction,
we can assume that 
$\bm{\phi}$ is not dynamically trivial.
Let $M_{\bm{\phi}}$ be the set of positive integers $m$
such that $\bm{\phi}^m$ is not dynamically trivial.
Then $M_{\bm{\phi}}$ is infinite.
Indeed,
its complement cannot contain any two consecutive numbers $l$ and $l+1$:
if $\bm{\phi}^l$ is dynamically trivial then $\bm{\phi}^{l+1}$
cannot be dynamically trivial,
because this would imply that $\bm{\phi}$
is dynamically trivial.
By Proposition \ref{proposition: properties spectral selectors intro}
(\ref{properties spectral sel intro: sign})
and (\ref{properties spectral sel intro: non-degeneracy}),
for any $m$ in $M_{\bm{\phi}}$
we have that either $c_+ (\bm{\phi}^m) > 0$
or $c_- (\bm{\phi}^m) < 0$.
Thus, there is a sequence $l(j) \to \infty$
in $M_{\bm{\phi}}$
such that either $c_+ (\phi^{l(j)}) > 0$ for all $j$
or $c_- (\phi^{l(j)}) < 0$ for all $j$.
We consider the case where $c_+ (\phi^{l(j)}) > 0$ for all $j$,
the other case being similar.
Set $c_{l(j)} = c_+(\bm{\phi}^{l(j)})$.
By Proposition \ref{proposition: properties spectral selectors intro}
(\ref{properties spectral sel intro: spectrality}),
there is a flat translated chain $\bm{p}_{l(j)}$
of $\bm{\phi}^{l(j)}$ with action $c_{l(j)}$.
By Lemma \ref{lemma: bound contact},
the sequence $(c_{l(j)})$ is bounded from above.
Thus $\frac{c_{l(j)}}{l(j)}$ tends to zero as $j \to \infty$,
and we can define a sequence $\big(m(j)\big)$
of positive numbers inductively by posing $m(1) = l(1)$
and $m (j+1)$ the smallest element $m$
of the sequence $\big(l(j)\big)$
that is bigger than $m(j)$ and satisfies
$m > m(j) \cdot \frac{c_m}{c_{m(j)}}$.
By construction,
$\frac{c_{m(j_1)}}{m(j_1)} \neq \frac{c_{m(j_2)}}{m(j_2)}$
for all $j_1 \neq j_2$.
By Lemma \ref{lemma: action relation for not geometrically distinct chains},
we thus conclude that all the periodic flat translated chains of $\bm{\phi}$
in the set $\{\, \bm{p}_{m(j)} \;\lvert\; j \in \mathbb{Z}_{>0} \,\}$
are geometrically distinct.

Finally we prove that if $\bm{\phi}$ is non-negative
then
\[
\limsup_{m \to \infty} \, \frac{N (\bm{\phi}, m)}{m} > 0 \,,
\]
where $N (\bm{\phi}, m)$ denotes the number
of geometrically distinct periodic flat translated chains of $\bm{\phi}$
contained in the interior of the support
and having period smaller than or equal to $m$.
Since $\bm{\phi}$ is non-negative,
Proposition \ref{proposition: properties spectral selectors intro}
(\ref{properties spectral sel intro: spectrality}),
(\ref{properties spectral sel intro: sign})
and (\ref{properties spectral sel intro: monotonicity})
imply that $c_- (\bm{\phi}) = 0$.
By Proposition \ref{proposition: properties spectral selectors intro}
(\ref{properties spectral sel intro: sign})
and (\ref{properties spectral sel intro: non-degeneracy})
we thus have $c_+ (\bm{\phi}) > 0$.
Moreover,
for every positive integer $m$
we also have $c_+ (\bm{\phi}^m) > 0$.
Indeed, since
\[
(\bm{\phi}, \bm{\id}, \cdots, \bm{\id}) \leq \bm{\phi}^m \,,
\]
by Proposition \ref{proposition: properties spectral selectors intro}
(\ref{properties spectral sel intro: monotonicity})
and (\ref{properties spectral sel intro: decreasing sequence})
we have
\[
0 < c_+ (\bm{\phi}) = c_+ (\bm{\phi}, \bm{\id}, \cdots, \bm{\id})
\leq c_+ (\bm{\phi}^m) \,.
\]
Set $c_m = c_+ (\bm{\phi}^m)$.
By Proposition \ref{proposition: properties spectral selectors intro}
(\ref{properties spectral sel intro: spectrality})
there is a flat translated chain $\bm{p}_m$ of $\bm{\phi}^m$
with action $c_m$.
By Lemma \ref{lemma: bound contact},
the sequence $(c_m)$ is bounded from above,
thus $\frac{c_m}{m}$ tends to zero as $m \to \infty$,
and we can define a sequence $\big(m(j)\big)$
of positive numbers inductively by posing $m(1) = 1$
and $m (j+1)$ the smallest integer
that is bigger than $m(j)$
and satisfies $m > m (j) \cdot \frac{c_m}{c_{m(j)}}$.
By Lemma \ref{lemma: action relation for not geometrically distinct chains},
all the periodic flat translated chains of $\bm{\phi}$
in the set $\{\, \bm{p}_{m(j)} \;\lvert\; j \in \mathbb{Z}_{>0} \,\}$
are geometrically distinct.
Thus,
\begin{equation}\label{equation: in proof theorem}
N \big(\bm{\phi}, m(j)\big) \geq j \,.
\end{equation}
Moreover,
by definition of the sequence $\big(m(j)\big)$
we have
\[
m (j + 1) - 1 \leq m (j) \,\cdot\, \frac{c_{m (j + 1) - 1}}{c_{m(j)}} \,.
\]
But, by Proposition \ref{proposition: properties spectral selectors intro}
(\ref{properties spectral sel intro: monotonicity})
and (\ref{properties spectral sel intro: decreasing sequence}),
$c_{m (j + 1) - 1} \leq c_{m (j + 1)}$.
We thus obtain
\[
m (j + 1) - 1 \leq m (j) \,\cdot\, \frac{c_{m (j + 1)}}{c_{m(j)}} \,,
\]
and so
\[
\frac{m (j + 1)}{m(j)} \leq \frac{c_{m (j + 1)}}{c_{m(j)}} + \frac{1}{m(j)}
\leq \frac{c_{m (j + 1)}}{c_{m(j)}} + \frac{1}{j}
\leq \frac{c_{m (j + 1)}}{c_{m(j)}} \, \Big( 1 + \frac{1}{j} \Big) \,.
\]
Let $C$ be a real number such that $c_{m(j)} \leq C$ for all $j$.
Then
\[
m(j) = \prod_{l = 1}^{j - 1} \, \frac{m (l+1)}{m (l)}
\leq \prod_{l = 1}^{j - 1} \, \frac{c_{m (l + 1)}}{c_{m(l)}} \, \Big(1 + \frac{1}{l} \Big)
= \frac{c_{m(j)}}{c_{m(1)}} \,\cdot\, j
\leq \frac{C}{c_{m(1)}} \,\cdot\, j \,.
\]
By \eqref{equation: in proof theorem},
and since $c_{m(1)} = c_1 = c_+ (\bm{\phi})$,
we thus obtain
\[
N \big(\bm{\phi}, m(j)\big) \geq j \geq m(j) \,\cdot\, \frac{c_+(\bm{\phi})}{C} \,,
\]
and so 
\[
\limsup_{m \to \infty} \, \frac{N (\bm{\phi}, m)}{m} > 0 \,.
\]


\end{document}